\documentclass[12pt]{amsart}

\usepackage{tikz}
\usetikzlibrary{snakes}
\usepackage[colorlinks=true, linkcolor=blue, anchorcolor=blue, citecolor=blue, filecolor=blue, menucolor= blue, urlcolor=blue]{hyperref}
\usepackage{amsmath,amssymb,amsthm,amscd}
\usepackage{verbatim}
\usepackage{comment}
\usepackage{multirow}
\usepackage{mathtools}
\usepackage{enumitem}
\usepackage{pgf,tikz}
\usetikzlibrary{arrows}
\usepackage[normalem]{ulem}

\newcommand{\ffi}{\varphi}

\usepackage[margin=1in]{geometry} 

\numberwithin{equation}{section}

\newtheorem{theorem}{Theorem}[section]
\newtheorem{proposition}[theorem]{Proposition}
\newtheorem{lemma}[theorem]{Lemma}
\newtheorem{corollary}[theorem]{Corollary}

\newtheorem{conjecture}[theorem]{Conjecture}
\newtheorem*{theorem*}{Theorem}

\theoremstyle{definition}

\newtheorem{notation}[theorem]{Notation}

\newtheorem{remark}[theorem]{Remark}


\newcommand{\NN}{ \ensuremath{\mathbb{N}}}

\newcommand{\PP}{ \ensuremath{\mathbb{P}}}

\newcommand{\reg}{\ensuremath{\mathrm{reg}}\hspace{1pt}}

\definecolor{MyDarkGreen}{cmyk}{0.7,0,1,0}

\def\cocoa{{\hbox{\rm C\kern-.13em o\kern-.07em C\kern-.13em o\kern-.15em A}}}


\begin{document}

\title[Powers of linear forms]{The Lefschetz question for ideals generated by powers of  linear forms in few variables}

\author{J.\ Migliore} 
\address{Department of Mathematics \\
University of Notre Dame \\
Notre Dame, IN 46556 USA}
 \email{migliore.1@nd.edu}

\author{U.\ Nagel}
\address{Department of Mathematics\\
University of Kentucky\\
715 Patterson Office Tower\\
Lexington, KY 40506-0027 USA}
\email{uwe.nagel@uky.edu}

\begin{abstract} 
The Lefschetz question asks if multiplication by a power of a general linear form, $L$, on a graded algebra has maximal rank (in every degree). We consider a quotient by an ideal that is generated by powers of linear forms. Then the Lefschetz  question is, for example, related to the problem whether a set of fat points imposes the expected number of conditions on a linear system of hypersurfaces of fixed degree. Our starting point is a result that relates Lefschetz properties in different rings. It suggests to use induction on the number of variables, $n$. If $n = 3$, then it is known that  multiplication by $L$ always has maximal rank. We show that the same is true for multiplication by $L^2$ if all linear forms are general. Furthermore, we give a complete description of when multiplication by $L^3$ has maximal rank (and its failure when it does not). 
As a consequence, for such ideals that contain a quadratic or cubic generator, we establish results on the so-called Strong Lefschetz Property for ideals in $n=3$ variables, and the Weak Lefschetz Property for ideals in $n=4$ variables.
\end{abstract}


\thanks{
{\bf Acknowledgements}: 
Migliore was partially supported by Simons Foundation grant \#309556.
Nagel was partially supported by Simons Foundation grant \#317096. \\
This paper originated from work done during a visit of Migliore to the University of Kentucky (see \cite{MN}), and he thanks the Department of Mathematics for their kind hospitality. The authors are greatly indebted to Craig Huneke for fruitful discussions and for allowing us to include their outcomes here. These discussions took place at a workshop in Oaxaca, and the authors thank its organizers, Chris Francisco, Tai Ha and Adam van Tuyl, for bringing us  together and for the stimulating atmosphere.  
Both authors also thank Brian Harbourne for useful comments.
}

\keywords{}

\subjclass[2010]{13D40; 13E10; 14N05}

\maketitle



\section{Introduction}

The Lefschetz questions in commutative algebra refer to the study of the rank of the homomorphism induced by multiplication by powers $L^k$ of a general linear form $L$ on components of some graded artinian quotient, $R/I$, of a polynomial ring $R = K[x_1,\dots,x_r]$ over an infinite field $K$. In particular, when $k=1$, if this homomorphism always has maximal rank (regardless of the  component chosen as the domain of the homomorphism) then we say that $R/I$ has the {\em Weak Lefschetz Property (WLP)} and if it always has maximal rank also for all choices of the exponent $k$ then $R/I$ is said to have the {\em Strong Lefschetz Property (SLP)}. 

Different families of ideals exhibit very different kinds of behavior. For monomial complete intersections in characteristic zero, it was shown by Stanley \cite{stanley} and later by Watanabe \cite{watanabe} that the SLP always holds. Stanley \cite{St-faces} used the WLP of certain toric ideals to prove one direction of the celebrated $g$-theorem. These results generated many natural questions, and many at least partial answers. For instance, one can ask if all complete intersections share some Lefschetz property (in characteristic zero), ideally SLP. There is no known counterexample, but the best result to date \cite{HMNW} is that all complete intersections in three variables have the WLP. More generally, it is known that not all Gorenstein quotients of $R$ have the WLP, but it is open whether in three variables they all have the SLP, or even the WLP \cite{BMMNZ2}. 

The Stanley-Watanabe result also led to a careful study of which monomial ideals have the WLP (especially), including a careful analysis of the role of the characteristic in these questions  \cite{BMMNZ1}, \cite{BK}, \cite{CMNZ},  \cite{cook-nagel 1}, \cite{cook-nagel 2},  \cite{kustin-vraciu}, \cite{LZ}, \cite{MMN1}. However, in this paper we always assume that the base field $K$ has characteristic zero.

Finally, the Stanley-Watanabe result led to the natural question of what can be said for an ideal generated by powers of more than $r$ linear forms, and by now a number of papers have contributed to a growing understanding of this situation. In \cite{SS}, the authors made the striking observation that in three variables (i.e. $r=3$), {\em any} such ideal has the WLP. It was shown in \cite{DIV} and \cite{CHMN} that  for multiplication by $L^2$, the same is not true even in three variables, by making a careful choice of the linear forms. Thus several papers studied these questions for ideals generated by powers of {\em general} linear forms, both in $K[x,y,z]$ and in rings with more variables. These include \cite{SS}, \cite{MMN1}, \cite{HSS}, \cite{MMN2}, \cite{M},  \cite{MM}. More often than not, the result was that not even WLP  holds, but it is fascinating to see situations where maximal rank does hold. In fact, this is related to failure of a fat point scheme to impose the expected number of conditions on hypersurfaces of some degree (see, e.g., \cite{MMN2, DIV, CHMN}). 

Complementing the above-mentioned results of \cite{SS} on the one hand and  \cite{DIV} and \cite{CHMN} on the other,  in \cite{MM} the authors began the study of multiplication by low-degree powers of a general linear form. It was shown that if $I  = (L_1^t,\dots,L_s^t) \subset K[x,y,z]$ is generated by {\em uniform} $t$-th powers of general linear forms, then $\times L^2 : [R/I]_{j-2} \rightarrow [R/I]_j$  has maximal rank, for all $j$. It was also shown there that for ideals generated by {\em four} uniform $t$-th powers of general linear forms, $\times L^3$ has maximal rank two-thirds of the time (see Theorem \ref{MM 5.1} below for the precise statement), $\times L^4$ has maximal rank one-third of the time, and $\times L^5$ fails maximal rank in exactly one degree for all $t \geq 4$.   It was also shown in \cite{MMN2} that if $I = (L_1^{a_1},\dots, L_4^{a_4}) \subset K[x,y,z]$ then $\times L^2 : [R/I]_{j-2} \rightarrow [R/I]_j$  has maximal rank for all $j$, regardless of the values of $a_1,\dots,a_4$.

The starting point of this paper is a general exchange result that relates Lefschetz properties of different rings (see Proposition~\ref{lem:exchange}). 
It suggests an inductive approach based on the number of variables and/or the number of minimal generators. In fact, a special case of Proposition~\ref{lem:exchange} (b) combined with the fact that every artinian ideal in a polynomial ring of at most two variables has the SLP and every complete intersection in three variables has the WLP (both from \cite{HMNW}) implies the mentioned WLP of ideals generated by powers of linear forms in three variables (see also \cite{MMN2}). 

The WLP says that multiplication by a general linear form $L$ on the quotient ring has  maximal rank in every degree. Continuing in the direction begun in \cite{MMN2} and especially in \cite{MM}, here we establish a complete description of the maximal rank property of $\times L^2$ and $\times L^3$ for ideals generated by arbitrary many powers of general linear forms. 
To this end we give a new approach to part of the argument, which significantly simplifies the computations. Our main result in this regard (from Theorems \ref{xL2}, \ref{xL3} and \ref{xL3 general}) is the following.


\begin{theorem} 
   \label{thm:intro1}
In $R = K[x,y,z]$ consider the ideal $I = (L_1^{a_1},\dots,L_s^{a_s})$, where the $L_i$ are general linear forms, $s \geq 3$, and $a_1,\ldots,a_s$ are positive integers.  Let $L$ be a general linear form,  and set $p = \max \{ j \ | \ \sum_{a_i \leq j} (j+1-a_i) \leq j \}$. 

\begin{itemize}

\item[(a)] For every $j$, the homomorphism $\times L^2 : [R/I]_{j-2} \rightarrow [R/I]_j$ has maximal rank.

\item[(b)] The homomorphism 
$\times L^3 : [R/I]_{j-3} \rightarrow [R/I]_j$ fails to have maximal rank in all degrees~$j$ if and only if 
\[
 \# \{ a_i \leq p +1 \} = p + 2 - \sum_{a_i \le p} (p+1 - a_i) \ge 4,  
\]
this number is even, and none of the $a_i$ equals $p+2$. Moreover, in this case multiplication fails maximal rank in exactly one degree, namely when $j = p+2$. Here we have $\dim [R/I]_{p-1} = \dim [R/I]_{p+2}$,  but the cokernel has dimension 1.

\end{itemize}
\end{theorem}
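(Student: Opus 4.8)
The plan is to pass to the Macaulay inverse system, where every multiplication map becomes a directional derivative, compute the Hilbert function of $R/I$, and then concentrate the entire difficulty in a single ``balanced'' degree. Write $S = K[X,Y,Z]$ for the dual ring, let $v_i$ be the direction dual to $L_i$ and $w$ the direction dual to the general form $L$. By apolarity,
\[
[R/I]_j \;\cong\; W_j := \{\, F \in S_j : \partial_{v_i}^{a_i} F = 0 \text{ for all } i \,\},
\]
and under this identification $\times L^d$ is dual to the derivative $\partial_w^d\colon W_j \to W_{j-d}$. Hence $\times L^d$ has maximal rank in degree $j$ if and only if $\partial_w^d|_{W_j}$ does, its cokernel being $\{F \in W_j : \deg_w F \le d-1\}$, where $\deg_w$ denotes degree in the $w$-direction; since $L$ is general, $w$ is in general position relative to the $v_i$. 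I would first record that $p$ is the socle degree of the two-variable reduction $\bar R/\bar I := (R/I)/L(R/I) \cong K[x,y]/\bar I$: by \cite{HMNW} this quotient has the SLP, so its Hilbert function is $\max\{0,(j+1)-f(j)\}$ with $f(j):=\sum_{a_i\le j}(j+1-a_i)$, and $f(j)\le j$ holds exactly for $j\le p$. In these terms the identity $\#\{a_i\le p+1\}=p+2-\sum_{a_i\le p}(p+1-a_i)$ is equivalent to $f(p+1)=p+2$, the smallest value compatible with $f(p+1)>p+1$.

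Next I would pin down $h_j=\dim[R/I]_j$. Because $R/I$ has the WLP by \cite{SS}, $\times L$ has maximal rank in every degree, so $h_j-h_{j-1}=\max\{0,(j+1)-f(j)\}$ on the increasing range; together with the known (expected, minimal) Hilbert function for general powers of linear forms in three variables, this determines $h$ throughout and shows it is unimodal. From this one reads off that, for both $d=2$ and $d=3$, the map $\times L^d$ is forced to be injective while $h$ is increasing and surjective while $h$ is decreasing, so the only possible failure occurs where source and target are balanced. For $d=3$ that degree is $j=p+2$, where $h_{p-1}=h_{p+2}$ and $\times L^3$ is a square map; away from it the dimension comparison, combined with the inductive use of the exchange Proposition~\ref{lem:exchange} (inducting on the number $s$ of generators and relating $R/I$ to $\bar R/\bar I$ and to ideals with fewer generators), forces maximal rank. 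The same scheme settles part~(a) completely: the analogous balanced-degree computation for $d=2$ yields a vanishing cokernel, so $\times L^2$ always has maximal rank.

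The heart of the matter, and the step I expect to be the main obstacle, is the square map $\partial_w^3\colon W_{p+2}\to W_{p-1}$ at $j=p+2$, whose cokernel is
\[
\{\, F \in S_{p+2} : \deg_w F \le 2,\ \partial_{v_i}^{a_i}F = 0 \text{ for all } i \,\}.
\]
Writing $F = F_0 + F_1 w + F_2 w^2$ with each $F_k$ a form of degree $p+2-k$ in two coordinates transverse to $w$, the constraints $\partial_{v_i}^{a_i}F=0$ become a linear system coupling $F_0,F_1,F_2$; only the generators with $a_i\le p+1$ impose genuine conditions on this $\deg_w\le 2$ space, which is why the relevant count is $N=\#\{a_i\le p+1\}=p+2-f(p)$. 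The delicate task is to show, using the general position of the $v_i$, that these conditions are as independent as possible, so that the solution space is at most one-dimensional and is exactly one-dimensional precisely when $N$ is even and $N\ge 4$. The excluded value $a_i=p+2$ is transparent here: such a generator imposes no condition on the source $W_{p-1}$ (a form of degree $p-1$ cannot be differentiated $p+2$ times nontrivially) but removes one dimension from the target $W_{p+2}$, breaking the balance $h_{p-1}=h_{p+2}$ and thereby restoring maximal rank, so its absence is necessary for failure. Carrying out the independence-and-parity bookkeeping for the coupled system, which is exactly where the evenness of $N$ enters, is the crux; once the cokernel dimension is pinned to $1$ under the stated conditions and to $0$ otherwise, and injectivity is confirmed in the remaining degrees by the Hilbert-function comparison, both parts follow, in agreement with Theorems~\ref{xL2}, \ref{xL3} and \ref{xL3 general}.
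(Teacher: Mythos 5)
Your framework is sound, but it is not an alternative to the paper's argument---it \emph{is} the paper's argument with the hard parts left out. The identification of the cokernel of $\times L^d$ with $\{F \in W_j : \deg_w F \le d-1\}$ is exactly the Emsalem--Iarrobino correspondence (Theorem \ref{emsalem-iarrobino}), which converts everything into fat-point linear systems in $\PP^2$. What you explicitly defer as ``the crux'' --- showing the conditions $\partial_{v_i}^{a_i}F=0$ on the $\deg_w\le 2$ part are ``as independent as possible,'' with failure exactly when $N\ge 4$ is even --- is the entire content of the theorem. The paper settles it by writing the relevant space as $\mathcal L_2(p+2;\, p,\, p+3-a_i \mid a_i\le p+2)$, reducing via B\'ezout splittings (Remark \ref{bezout}) and Cremona transformations (Lemma \ref{lem:Cremona}) to systems of double points, and invoking Alexander--Hirschowitz (Theorem \ref{thm:AH}): the unique surviving special case $\mathcal L_2(2;2^2)$ is precisely the source of the parity condition and of the one-dimensional cokernel. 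No amount of ``general position of the $v_i$'' bookkeeping substitutes for this input, since genericity is exactly what fails in the special case. Moreover, even granting the cokernel computation, maximal rank does not follow without something like Proposition \ref{magic} (the snake-lemma argument on the restricted resolution), which converts ``the cokernel has the syzygy-predicted dimension'' into injectivity.

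Two further steps are unsupported or false as written. First, for $d=3$ the WLP of $R/I$ leaves \emph{two} candidate degrees, $j=p+1$ and $j=p+2$; you dismiss $j=p+1$ via ``dimension comparison'' plus Proposition \ref{lem:exchange}, but no dimension comparison forces maximal rank there (WLP only gives $h_{p-2}\le h_p$ and $h_{p+1}\le h_p$, which decides nothing), and the exchange proposition does not apply to this situation; the paper needs a separate Cremona computation (Theorem \ref{xL3}) to prove injectivity in that degree. Second, your claim that WLP together with ``the known (expected, minimal) Hilbert function for general powers of linear forms in three variables'' determines $h$ throughout is incorrect: such ideals do not always have the expected Hilbert function --- e.g.\ for $I=(L_1^3,\dots,L_5^3)$ one has $\dim[R/I]_4 = \dim\mathcal L_2(4;2^5) = 1$, not $0$ --- and indeed verifying $\dim[R/I]_{p-1} = \dim[R/I]_{p+2}$ in the exceptional case takes a substantial portion of the paper's proof of Theorem \ref{xL3 general} (standard-form criteria plus further Cremona reductions), as does ruling out failure when some $a_i=p+2$, which you argue only heuristically. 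So the proposal is a reasonable plan of attack, but the three missing ingredients --- the AH/Cremona speciality analysis, the degree $p+1$ case, and the Hilbert function computation --- are the substance of the proof.
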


As a special case of part (b) of the above result, we give in Corollary \ref{cor:L3 and uniform power}  the result when all the $a_i$ are equal. This explains part of the computational evidence provided in Remark 6.2 of \cite{MM}. Theorem ~\ref{thm:intro1} also gives as corollaries statements on the SLP for ideals in three variables having the form $(L_0^{a_0},L_1^{a_1},\dots,L_s^{a_s})$ when $a_0=2$ (Theorem \ref{L2 in ideal}), and when $a_0=3$ and the $a_i$ are all equal for $i \geq 1$ (Corollary~\ref{cor:same degree}).

We also investigate the WLP of ideals in four variables.
Examples show that statements analogous to Theorem~\ref{thm:intro1}, but in rings with more variables, are not necessarily true (see Remark~\ref{no higher power works}). However, 
combining  the above result and the mentioned exchange property, we obtain results about the  WLP for ideals in four variables. In particular, we show (see Corollaries~\ref{four variables} and \ref{for:uniform powers in 4 var}). 

%
%
%
%
%
%

\begin{theorem}
Let $I = (L_0^{a_0}, L_1^{a_1},\dots, L_s^{a_s}) \subset R = K[x_1,x_2,x_3,x_4]$, with $s \geq 3$, where the $L_i$ are general linear forms. Then one has: 

\begin{itemize}

\item[(a)] If one of the powers $a_i$ is at most two, then $R/I$ has the WLP.

\item[(b)] If $a_0 = 3$ and $a_1 = \cdots = a_s = t \ge 3$, then $R/I$ has the WLP if and only if one of the following conditions holds:

\begin{itemize}
\item[(i)] $s$ is odd; 
\item[(ii)] $s$ is even and $t$ is not a multiple of $s-1$.
\end{itemize}

\end{itemize}
\end{theorem}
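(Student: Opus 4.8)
The plan is to deduce both statements from the three-variable results of Theorem~\ref{thm:intro1} by means of the exchange property of Proposition~\ref{lem:exchange}. The key point is that the exchange property lets us trade the general degree-one multiplier $L$ against a chosen generator $L_i^{a_i}$: multiplication by $L$ on $R/I$ has maximal rank in every degree (that is, $R/I$ has the WLP) if and only if multiplication by $\bar L_i^{a_i}$ has maximal rank in every degree on the three-variable algebra $\bar R/\bar J_i$, where $\bar R = R/(L)\cong K[x,y,z]$, the ideal $\bar J_i$ is generated by the images of the remaining powers $L_j^{a_j}$ with $j\ne i$, and the bars denote images modulo the general linear form $L$. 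Since the $L_j$ are general, their images $\bar L_j$ are again general linear forms in three variables, so the reduced problem is exactly of the type governed by Theorem~\ref{thm:intro1}. In each case the task is therefore to choose the generator to be exchanged and then read off the conclusion from Theorem~\ref{thm:intro1}.

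For part (a), after reindexing we may assume $a_0\le 2$, and I would exchange $L$ against $L_0^{a_0}$. If $a_0=1$, the reduced statement is the WLP of an ideal of general powers in three variables, which holds by \cite{SS}; alternatively one notes directly that $L_0\in I$ forces $R/I$ to be a quotient of $K[x,y,z]$. If $a_0=2$, the reduced statement is that multiplication by $\bar L_0^2$ has maximal rank in every degree on $\bar R/(\bar L_1^{a_1},\dots,\bar L_s^{a_s})$, which is precisely Theorem~\ref{thm:intro1}(a). Either way $R/I$ has the WLP.

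For part (b), with $a_0=3$ and $a_1=\cdots=a_s=t\ge 3$, I would exchange $L$ against $L_0^3$. The reduced problem is whether multiplication by $\bar L_0^3$ has maximal rank in every degree on $\bar R/(\bar L_1^{t},\dots,\bar L_s^{t})$, an ideal of $s$ uniform $t$-th powers of general linear forms in three variables, and this is answered by Theorem~\ref{thm:intro1}(b) (equivalently Corollary~\ref{cor:L3 and uniform power}). It then remains only to translate the combinatorial failure criterion into the stated parity and divisibility condition. For the uniform exponent $t$ one computes $p=(t-1)+\lfloor (t-1)/(s-1)\rfloor$, whence $t\le p+1$ and $\#\{a_i\le p+1\}=s$. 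Writing $f(j)=\sum_{a_i\le j}(j+1-a_i)$, the equality hypothesis of Theorem~\ref{thm:intro1}(b) is exactly $f(p+1)=p+2$, and a short computation shows this holds if and only if $t\equiv 0\pmod{s-1}$. When it holds, one checks that $p+2>t$, so no $a_i$ equals $p+2$, and the common value of the two quantities is $s$; hence the failure criterion collapses to the requirement that $s$ be even with $s\ge 4$, together with $s-1\mid t$. Since $s\ge 3$, an even $s$ automatically satisfies $s\ge 4$, so $\bar L_0^3$ fails maximal rank (equivalently $R/I$ fails the WLP) precisely when $s$ is even and $t$ is a multiple of $s-1$. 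Negating this yields exactly the two cases (i) and (ii).

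The main obstacle I anticipate is not the combinatorial bookkeeping of part (b) but setting up the exchange correctly: one must verify that the hypotheses of Proposition~\ref{lem:exchange} apply to $I$ with a single generator singled out, that cutting by the general form $L$ genuinely realizes the passage from four to three variables while keeping all the images general, and that the resulting equivalence is between maximal rank in \emph{every} degree on both sides, not merely in a single degree. Once this dictionary is in place, parts (a) and (b) follow by quoting Theorem~\ref{thm:intro1}(a) and (b) respectively, and checking the elementary equivalence $f(p+1)=p+2\iff (s-1)\mid t$ for uniform exponents.
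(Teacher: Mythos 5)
Part (a) of your proposal coincides with the paper's own proof of Corollary \ref{four variables}: the paper also applies Proposition \ref{lem:exchange}(b) with $b=1$, $k=2$ and quotes Theorem \ref{xL2} for the three-variable input. Part (b), however, rests on a claim that you never prove and that cannot be extracted from the tools you invoke: the biconditional ``$R/I$ has the WLP if and only if $\times \bar L_0^3$ has maximal rank in every degree on $\bar R/(\bar L_1^t,\ldots,\bar L_s^t)$.'' Proposition \ref{lem:exchange}(b) gives only one direction of this, and only under a second hypothesis you do not verify: applied with $b=1$, $k=3$, it requires that $\times L$ have maximal rank in every degree on the four-variable algebra $A = R/(L_1^t,\ldots,L_s^t)$, i.e.\ the WLP of a four-variable algebra with no low-degree generator. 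That hypothesis is false in cases the theorem covers. Concretely, take $s=5$, $t=3$: the theorem asserts that $R/(L_0^3,L_1^3,\ldots,L_5^3)$ has the WLP ($s$ odd), but your exchange would need the WLP of $R/(L_1^3,\ldots,L_5^3)$, which is the $s=4$, $t=3$ instance of the very same theorem (five general cubes in four variables) and \emph{fails} the WLP, since $s=4$ is even and $s-1=3$ divides $t=3$. So even the positive direction cannot be closed by a single application of the exchange. Worse, the ``only if'' half of (b) needs the reverse implication --- failure of maximal rank in three variables forcing failure of the WLP in four --- and Proposition \ref{lem:exchange} never concludes failure of anything.

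This is exactly why the paper's proof of Corollary \ref{for:uniform powers in 4 var} is long: it is an induction on $s$ built on diagram \eqref{eq:diagram2}, in which the exchanged generator is $L_s^t$ rather than $L_0^3$. The inductive hypothesis (the statement for one fewer generator) supplies the maximal-rank property of the maps $\psi_1,\psi_2$; the three-variable Corollaries \ref{cor:same degree}, \ref{which b fail} and \ref{cor:L3 and uniform power} control $\phi_3$; a separate subcase argument handles the degrees where $\psi_1$ or $\psi_2$ fails surjectivity by one (precisely the situation in the counterexample above); and the failure assertion is proved directly from the Snake Lemma long exact sequence, which identifies $\ker\psi_3\cong\ker\phi_3$ and shows that kernel and cokernel are both one-dimensional in degree $st/(s-1)$. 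Your combinatorial bookkeeping --- the formula $p=(t-1)+\lfloor (t-1)/(s-1)\rfloor$ and the equivalence of the failure criterion with $(s-1)\mid t$ --- is correct and agrees with Corollary \ref{cor:L3 and uniform power}, but it is the routine part; the missing induction and the failure-transfer argument are the actual substance of the proof.
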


The paper is organized as follows. In Section 2 we recall some background results for ideals in three variables. The exchange property is discussed in Section 3. Previous approaches to the problems discussed here rely on involved computations. We avoid most of these by using the techniques presented in Section 4. Multiplication by $L^2$ and $L^3$ is discussed in Sections 5 and 6, respectively. Applications to the SLP and WLP are described in Section 7.

%
%


\section{Background}

We assume throughout that $K$ is a field of characteristic zero. We will consider ideals $I = (L_1^{a_1},\dots,L_s^{a_s}) \subset R = K[x,y,z]$ where the $L_i$ are general linear forms, and for convenience we will assume $a_1 \leq \dots \leq a_s$.

The following result was a launching point for \cite{MM} and for the current paper, which study what can be said when we consider multiplication by higher powers of a general linear form. Notice that unlike all results obtained in this paper, in the following the linear forms are not assumed to be general.

\begin{theorem}[\cite{SS}] \label{SS theorem}
If $I = (L_1^{a_1},\dots,L_s^{a_s}) \subset R = K[x,y,z]$ where the $L_i$ are arbitrary linear forms then $R/I$ has the WLP.
\end{theorem}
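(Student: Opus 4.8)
The plan is to translate the weak Lefschetz property into a statement about a rank $s-1$ vector bundle on $\PP^2$ and then to control its behaviour on a general line. Sheafifying the presentation $\bigoplus_{i=1}^s \mathcal{O}_{\PP^2}(-a_i) \xrightarrow{(L_i^{a_i})} \mathcal{O}_{\PP^2}$ of the ideal, and using that $I$ is $\mideal$-primary so that this is surjective as a map of sheaves, I would define the syzygy bundle $\mS$ by
\[
0 \to \mS \to \bigoplus_{i=1}^s \mathcal{O}_{\PP^2}(-a_i) \to \mathcal{O}_{\PP^2} \to 0 .
\]
Taking cohomology of the twist by $j$ and using $H^1(\mathcal{O}_{\PP^2}(k)) = 0$ for all $k$, one identifies $[R/I]_j \cong H^1(\mS(j))$ for every $j$, and under this identification $\times L\colon [R/I]_{j-1}\to[R/I]_j$ becomes the map induced on $H^1_*(\mS)$ by the section $L$ of $\mathcal{O}_{\PP^2}(1)$.

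Next I would restrict to the line $\ell = \{L=0\}\cong \PP^1$. From $0\to \mS(j-1)\xrightarrow{\times L}\mS(j)\to \mS(j)|_\ell\to 0$ the long exact sequence expresses the kernel and cokernel of $\times L$ in degree $j$ through $H^0$ and $H^1$ of $\mS(j)|_\ell$ and the connecting map $H^1(\mS(j)|_\ell)\to H^2(\mS(j-1))$. Since $\mS$ is locally free, restriction keeps the defining sequence exact, so $\mS|_\ell$ is the syzygy bundle on $\PP^1$ of the restricted forms $m_i^{a_i}$, where $m_i := L_i|_\ell$; as $\ell$ is general these are $a_i$-th powers of $s$ \emph{distinct} linear forms on $\PP^1$. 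This is the crucial point: the whole question is pushed down to two variables, where every artinian algebra has the SLP by \cite{HMNW}, and one reads off the splitting type $\mS|_\ell=\bigoplus_k \mathcal{O}_{\PP^1}(b_k)$ from the minimal free resolution of $K[u,v]/(m_1^{a_1},\dots,m_s^{a_s})$.

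I would then feed the splitting type back into the criterion of step two. When $\mS|_\ell$ is balanced the conclusion is immediate: for each $j$ either $H^1(\mS(j)|_\ell)=0$, forcing surjectivity, or $H^0(\mS(j)|_\ell)=0$, forcing injectivity, so $\times L$ has maximal rank in every degree and $R/I$ has the WLP.

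The main obstacle I anticipate is that $\mS|_\ell$ need \emph{not} be balanced: in degenerate configurations, where the restricted forms already generate the maximal ideal of $K[u,v]$ in low degree, the splitting type can have gaps larger than $1$. In such a degree one cannot conclude from $H^1(\mS(j)|_\ell)\neq 0$ alone that surjectivity fails, because the connecting homomorphism $H^1(\mS(j)|_\ell)\to H^2(\mS(j-1))$ may be injective and thus absorb the obstruction; symmetrically, injectivity must be tested against $H^0(\mS(j)|_\ell)$. So the heart of the argument is a careful simultaneous bookkeeping of these two cohomology groups in the single potentially dangerous degree, using the exact knowledge of the resolution in two variables (equivalently, the SLP of the two-variable reduction supplied by \cite{HMNW}) to check that the source or the target has precisely the dimension needed for $\times L$ to retain maximal rank.
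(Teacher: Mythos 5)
Your outline reconstructs the strategy of the cited source \cite{SS} rather than anything proved in this paper: here the statement is imported, and the paper's own alternative derivation (Proposition~\ref{lem:exchange}(b) combined with the two facts from \cite{HMNW}) covers only \emph{general} linear forms, not the arbitrary ones in the statement. Your setup is correct as far as it goes: the identification $[R/I]_j \cong H^1(\mS(j))$, the restriction sequence, the fact that $\mS|_\ell$ is the syzygy bundle of the restricted powers $m_i^{a_i}$ of distinct binary forms, and the criterion that $H^1(\mS(j)|_\ell)=0$ forces surjectivity while $H^0(\mS(j)|_\ell)=0$ forces injectivity. The genuine gap is that the decisive step is only announced (``the main obstacle I anticipate'') and the plan you sketch for it would not succeed as stated. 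Balancedness is the wrong dividing line: when the $a_i$ are spread out, the splitting type of $\mS|_\ell$ is unavoidably unbalanced, so there is an entire \emph{interval} of degrees --- not a ``single potentially dangerous degree'' --- in which both $H^0(\mS(j)|_\ell)$ and $H^1(\mS(j)|_\ell)$ are nonzero, and no analysis of the connecting homomorphism of the restricted bundle alone decides maximal rank there. Also, the two-variable input you invoke, the SLP of \cite{HMNW}, does not literally apply: the $m_i$ are pairwise distinct but in no way general, since the $L_i$ are arbitrary. Finally, any plan that requires knowing ``precisely the dimension'' of the source and target of $\times L$ is doomed, because for arbitrary linear forms the Hilbert function of $R/I$ itself is \emph{not} determined by the $a_i$; the proof must avoid ever computing it.

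What closes the argument --- and what \cite{SS} actually uses --- is the exact Hilbert function in two variables, valid for \emph{distinct} (not general) forms by elementary divisibility: a binary form vanishing to order $k_i$ at distinct points of $\PP^1$ is divisible by the product of the corresponding $k_i$-th powers, so $\bar A := K[u,v]/(m_1^{a_1},\dots,m_s^{a_s})$ satisfies
\[
\dim [\bar A]_j \;=\; \max\left\{0,\ (j+1) - \sum_{a_i\le j}(j+1-a_i)\right\} \quad \text{for all } j.
\]
This yields a dichotomy in every degree. On the cokernel side no sheaf cohomology is needed at all: right-exactness of $R/I \xrightarrow{\times L} R/I \to R/(I,L) \to 0$ gives $\coker(\times L)_j = [R/(I,L)]_j \cong [\bar A]_j$, so if $[\bar A]_j = 0$ the map is surjective onto degree $j$. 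If instead $\dim [\bar A]_j > 0$, then it equals the expected value $(j+1)-\sum_{a_i\le j}(j+1-a_i)$, and the dimension count in
\[
0 \to H^0(\mS(j)|_\ell) \to \bigoplus_{i} [K[u,v]]_{j-a_i} \to [K[u,v]]_j \to [\bar A]_j \to 0
\]
forces $H^0(\mS(j)|_\ell) = 0$, whence $\times L$ is injective into degree $j$ by your own criterion. So maximal rank holds in every degree and $R/I$ has the WLP. Algebraically this is exactly the mechanism of the paper's Proposition~\ref{magic} specialized to $k=1$: its hypothesis is precisely that $\dim [R/(I,L)]_j$ attains the expected value. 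In short, your framework is sound, but the missing idea is this two-variable computation and the resulting injective-or-surjective alternative; it, and not balancedness or connecting-map bookkeeping, is the heart of the proof.
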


\begin{remark}
In \cite{DIV} and \cite{CHMN} it was shown that the result of Schenck and Seceleanu does not extend even to multiplication by the square of a general linear form. For this reason from now on we assume that the linear forms are general.
\end{remark}

\begin{theorem}[\cite{EI}] \label{emsalem-iarrobino}
Let $\langle L_1^{a_1} ,\dots,L_s^{a_s} \rangle \subset R$ be an ideal generated by powers of  $s$ linear forms with positive integers $a_1,\dots,a_n$.  Let $\wp_1, \dots, \wp_n$ be the ideals of the $s$ points in $\mathbb P^{2}$ that are dual to the linear forms.  Then one has,  for each integer $j$,
\[
\dim \left [R/ \langle L_1^{a_1}, \dots, L_n^{a_n} \rangle  \right ]_j =
\dim \left [ \bigcap_{a_i \le j}  \wp_i^{j-a_i +1} \right ]_j .
\]
\end{theorem}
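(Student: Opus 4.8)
The natural framework is Macaulay's theory of inverse systems (apolarity). The plan is to let $S = K[X,Y,Z]$ be a second polynomial ring on which $R$ acts by differentiation, so that $f \in R$ sends $g \in S$ to $f \circ g := f(\partial_X,\partial_Y,\partial_Z)\,g$. For each $j$ this induces a perfect pairing $R_j \times S_j \to K$, $(f,g)\mapsto f\circ g \in S_0 = K$. Given the ideal $I$, its inverse system $I^{-1} = \{ g \in S : f \circ g = 0 \text{ for all } f \in I\}$ is a graded $R$-submodule of $S$. The first step is the standard degreewise duality $[I^{-1}]_j = ([I]_j)^{\perp}$: one inclusion is immediate, and for the other, if $g \in S_j$ is orthogonal to $[I]_j$ and $f \in I_d$ with $d \le j$, then for every $h \in R_{j-d}$ one has $h \circ (f \circ g) = (hf)\circ g = 0$ because $hf \in I_j$, and perfectness of the pairing in degree $j-d$ forces $f \circ g = 0$. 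Taking dimensions gives $\dim [R/I]_j = \dim R_j - \dim [I]_j = \dim [I^{-1}]_j$, which converts the left-hand side of the claim into a computation inside $S$.

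Next I would exploit that the inverse system turns sums of ideals into intersections of submodules: since $I = \sum_i (L_i^{a_i})$ and $g$ is annihilated by a sum exactly when it is annihilated by each summand, $I^{-1} = \bigcap_i (L_i^{a_i})^{-1}$, hence $[I^{-1}]_j = \bigcap_i [(L_i^{a_i})^{-1}]_j$. It then remains to identify a single factor $[(L^a)^{-1}]_j$. Here $g \in S_j$ lies in $(L^a)^{-1}$ if and only if $L^a \circ g = 0$, i.e. $g$ is killed by the $a$-th power of the directional derivative $D_L = L(\partial)$. Choosing coordinates so that $L = x$ makes $D_L = \partial_X$ and the dual point $P = [1:0:0]$ with $\wp = (Y,Z)$; then $\ker(\partial_X^a : S_j \to S_{j-a})$ is spanned by the degree-$j$ monomials in which $X$ appears to power at most $a-1$. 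But this is exactly $[(Y,Z)^{j-a+1}]_j = [\wp^{j-a+1}]_j$, the forms of degree $j$ whose $X$-degree is at most $a-1$. When $j < a$ the operator $D_L^a$ vanishes on $S_j$, matching the fact that such an index does not occur in the intersection $\bigcap_{a_i \le j}$.

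Assembling the pieces, $[I^{-1}]_j = \bigcap_{a_i \le j} [\wp_i^{\,j-a_i+1}]_j = \bigl[\, \bigcap_{a_i \le j} \wp_i^{\,j-a_i+1} \bigr]_j$, where the last equality holds because passing to the degree-$j$ graded piece commutes with intersection; combined with $\dim [R/I]_j = \dim [I^{-1}]_j$ this yields the stated equality. The main technical point to get right is the single-generator identification, that is, the apolarity lemma relating $(L^a)^{-1}$ to a power of the dual point ideal; everything else is the formal machinery of inverse systems and the degreewise duality. I should also note that for a point in $\mathbb{P}^2$ the ideal $\wp$ is a complete intersection of two linear forms, so its ordinary and symbolic powers coincide and no distinction between the two is needed here.
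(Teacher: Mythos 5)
Your proof is correct. The paper itself gives no argument for this statement --- it is quoted as a known result of Emsalem--Iarrobino \cite{EI} --- and your argument is precisely the standard one underlying that citation: Macaulay duality reducing $\dim [R/I]_j$ to $\dim [I^{-1}]_j$, the identity $I^{-1} = \bigcap_i (L_i^{a_i})^{-1}$, and the apolarity computation $\ker(\partial_X^{a}\colon S_j \to S_{j-a}) = [(Y,Z)^{j-a+1}]_j$ in characteristic zero, with the cases $a_i > j$ imposing no condition. Your closing remarks (graded pieces commute with intersections; ordinary and symbolic powers of a point ideal in $\mathbb{P}^2$ coincide) correctly dispose of the only points where care is needed.
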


Following \cite{MMN2} and \cite{MM}, from now on, we will denote by
$${\mathcal L}_{2}(j; b_1, b_2,\cdots ,b_n)$$
the linear system  $[ \wp_1^{b_1} \cap \dots \cap \wp_n^{b_n} ]_j\subset [R]_j $. Note that we view it as a vector space, not a projective space, when we compute dimensions.
If necessary, in order to simplify notation, we use superscripts to indicate repeated entries. For example,
$\mathcal L_2(j; 5^2, 2^3) = \mathcal L_2 (j; 5, 5, 2, 2, 2)$.

Notice that, for every linear system $\mathcal L_2 (j;b_1,\ldots,b_n)$, one has
\[
\dim_K  \mathcal L_2 (j; b_1,\ldots,b_n) \ge \max \left \{0, \binom{j+2}{2} - \sum_{i=1}^n \binom{b_i +1}{2} \right \},
\]
where the right-hand side is called the {\em expected dimension} of the linear system. If the inequality is strict, then the linear system $\mathcal L_2 (j; b_1,\ldots,b_n)$ is called {\em special}. It is a difficult problem to classify the special linear systems.

Using Cremona transformations, one can relate two different linear systems (see \cite{Nagata}, \cite{LU}, or \cite{Dumnicky}, Theorem 3), which we state only in the form we will need even though the cited results are more general.

\begin{lemma}
  \label{lem:Cremona}
Let $n >   2$ and let $j, b_1,\ldots,b_n$ be non-negative integers, with $b_1 \geq \dots \geq b_n$.  Set $m =  j - (b_1 + b_2 + b_{3})$. If $b_i + m \ge 0$ for all $i = 1,2,3$, then
\[
\dim_K \mathcal L_2 (j; b_1,\ldots,b_n) = \dim_K \mathcal L_2 (j + m; b_1 +m,b_2+m,b_{3} +m , b_{4},\ldots,b_n).
\]
\end{lemma}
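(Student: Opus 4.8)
The identity is exactly the effect on dimensions of the standard quadratic Cremona transformation based at the three points $p_1, p_2, p_3$ dual to $L_1, L_2, L_3$, so my plan is to make this transformation explicit and show that it induces a vector-space isomorphism between the two linear systems. Since the points are general, I would first choose coordinates so that $p_1 = [1:0:0]$, $p_2 = [0:1:0]$, $p_3 = [0:0:1]$, and work with the quadratic involution $\phi : \PP^2 \dashrightarrow \PP^2$, $[x:y:z] \mapsto [yz:xz:xy]$, which blows up the three coordinate points and contracts the three coordinate lines.

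The key computation is the substitution $F \mapsto F(yz, xz, xy)$. If $F$ is a form of degree $j$ with multiplicities $b_1, b_2, b_3$ at the three coordinate points, then writing out the monomials shows that $G := F(yz, xz, xy)$ has degree $2j$ and is divisible by $x^{b_1} y^{b_2} z^{b_3}$, so that $\tilde F := G/(x^{b_1}y^{b_2}z^{b_3})$ has degree $2j - b_1 - b_2 - b_3 = j + m$. I would then track the three multiplicities of $\tilde F$ at the coordinate points: a direct count of the lowest-degree monomials shows that the multiplicity of $\tilde F$ at $p_i$ equals $j - b_k - b_\ell = b_i + m$, where $\{i,k,\ell\} = \{1,2,3\}$, provided this number is nonnegative. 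This is precisely where the hypothesis $b_i + m \ge 0$ enters: the required minimal monomial exists only when $b_k + b_\ell \le j$, which is equivalent to $b_i + m \ge 0$. Away from the coordinate triangle $\phi$ is a local isomorphism, so $\tilde F$ has the same multiplicity $b_i$ at $p_i$ (for $i \ge 4$) that $F$ has at $\phi(p_i)$. Because $\phi \circ \phi = \mathrm{id}$ on $\PP^2$, this substitution is invertible, hence $F \mapsto \tilde F$ is a bijection between the two linear systems and the two dimensions agree.

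The one genuinely delicate point — and the main obstacle — is the bookkeeping of the base-point behaviour: I must verify that the substitution transforms the fat-point conditions at $p_1, p_2, p_3$ into exactly the asserted multiplicities and nothing more (so that no extra conditions are gained or lost along the contracted lines), and I must account for the fact that the transformation moves the remaining points to $\phi(p_4), \dots, \phi(p_n)$. Here I would invoke the generality of the $L_i$: the image configuration $\{p_1, p_2, p_3, \phi(p_4), \dots, \phi(p_n)\}$ is again general, so the dimension of the system on the right-hand side depends only on the data $(j+m; b_1+m, b_2+m, b_3+m, b_4, \dots, b_n)$ and matches what the substitution produces. The cleanest way to organize all of this is to pass to the blow-up $X \to \PP^2$ at the $n$ points, where $\phi$ lifts to an isomorphism onto the corresponding blow-up; the induced map on $\mathrm{Pic}(X)$ sends $H \mapsto 2H - E_1 - E_2 - E_3$ and $E_i \mapsto H - E_j - E_k$ for $i \le 3$ (with $\{i,j,k\} = \{1,2,3\}$) while fixing each $E_i$ for $i \ge 4$, and applying this to the class $jH - \sum_i b_i E_i$ reproduces the asserted degree and multiplicities while making the equality $\dim_K \mathcal L_2(j; b_1,\ldots,b_n) = h^0(X, jH - \sum_i b_i E_i) = h^0(X', \,\cdot\,)$ manifest.
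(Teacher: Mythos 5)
The paper does not actually prove this lemma: it is quoted from the literature (see \cite{Nagata}, \cite{LU}, and \cite{Dumnicky}, Theorem 3) in exactly the special form needed later, so there is no internal proof to compare against. What you have written is the standard argument underlying those references, and it is essentially correct: the substitution $F \mapsto F(yz,xz,xy)/(x^{b_1}y^{b_2}z^{b_3})$ is well defined on the system (divisibility follows from the three multiplicity conditions, as your monomial count shows), it is linear, it lands in the target system, and composing it with the analogous map in the other direction returns $(xyz)^{j}F/(xyz)^{j}=F$, giving the bijection; the Picard-lattice form of the argument on the blow-up ($H \mapsto 2H - E_1 - E_2 - E_3$, $E_i \mapsto H - E_j - E_k$) is indeed the cleanest bookkeeping, and it makes visible where the hypothesis $b_i + m \ge 0$ enters, since identifying $h^0$ of the class on the blow-up with the dimension of a plane system of assigned multiplicities requires those multiplicities to be non-negative. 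Two small points deserve sharpening. First, the monomial count gives that the multiplicity of $\tilde F$ at $p_i$ ($i \le 3$) is \emph{at least} $b_i+m$ (each term contributes $\alpha + j - b_k - b_\ell \ge j - b_k - b_\ell$), not exactly equal for every member; only the inequality is needed for the map to land in the target system, so you should claim only that. Second, your appeal to generality for $p_4,\dots,p_n$ is the right move but should be made precise: either note that a birational automorphism of $\PP^2$ carries a general configuration to a general configuration, or invoke upper semicontinuity of $\dim_K \mathcal L_2$ in the positions of the points, so that the dimension computed at $p_1,p_2,p_3,\phi(p_4),\dots,\phi(p_n)$ agrees with the value at general points, which is what the notation $\mathcal L_2(j+m;\dots)$ refers to. With these clarifications your argument is a complete and self-contained proof of the lemma, which is more than the paper itself provides.
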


The analogous linear systems have also been studied for points in $\mathbb P^r$. Following \cite{DL}, the linear system $\mathcal L_r (j; b_1,\ldots,b_n)$ is said to be in {\em standard form} if 
\[
(r-1)  j \ge b_1 + \dots  + b_{r+1} \quad \text{and} \quad b_1 \ge \cdots \ge b_n \ge 0.
\]
In particular, for $r=2$,  they show that every linear system in standard form is non-special.  (This is no longer true if $r \ge 3$. For example,  $\mathcal L_3 (6; 3^9)$ is in standard form and special.)

Notice again that we always use the vector space dimension of the linear system rather than the dimension of its projectivization. Furthermore, we always use the convention that a binomial coefficient $\binom{a}{r}$ is zero if $a < r$.

\begin{remark} \label{bezout}
B\'ezout's theorem also provides a useful simplification. Again, we only state the result we need in this paper. Assume the points $P_1,\dots,P_n$ are general.  
If $j < b_1 + b_2$ then
\[
\dim \mathcal L_2(j;b_1,\dots,b_n) = \dim \mathcal L_2 (j-1; b_1-1,b_2-1,b_3,\dots, b_n).
\]
\end{remark}

\begin{remark} \label{preserve}
In section \ref{L3 section} we will use Cremona transformations to complete some calculations. We observe here that if the linear system $\mathcal L_2(d; a_1,\dots,a_s)$ is transformed to the linear system $\mathcal L(e; b_1,\dots,b_s)$ by a Cremona transformation, then the fat point scheme $a_1 P_1 + \dots + a_s P_s$ imposes independent conditions on curves of degree $d$ if and only if $b_1 P_1 + \dots + b_s P_s$ imposes independent conditions on curves of degree $e$. Indeed, this can be shown by a tedious computation involving binomial coefficients, and Brian Harbourne has shown us a more elegant argument using the blow-up surface associated to the points $P_1,\dots,P_s$.
\end{remark}

We need the following special case of a theorem by Alexander and Hirschowitz on double points in any projective space \cite{AH}. 

\begin{theorem} 
     \label{thm:AH}
For a set of general double points in $\PP^2$, a linear system $\mathcal L(d; 2^m)$ has the 
expected dimension, namely $\max \{0, \binom{d+2}{2} - 3m\}$, unless $(d, m) \in \{ (4, 5), (2, 2)\}$. In these two  
cases the expected dimension is zero, but the actual dimension is one. 
\end{theorem}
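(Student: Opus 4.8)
The plan is to verify the two exceptional pairs by exhibiting the extra curve, to dispose of the cases $m \le 2$ and $d \le 3$ directly, and to reduce all remaining cases to systems already known to be non-special by the results recorded above. Throughout, write $e(d,m) = \max\{0, \binom{d+2}{2} - 3m\}$ for the expected dimension. For the two exceptional pairs one checks directly that $e(d,m) = 0$ while $\dim \mathcal L(d; 2^m) = 1$: when $(d,m) = (2,2)$ the unique conic singular at the two general points is the double line through them, and when $(d,m) = (4,5)$ the five general points lie on a unique conic $C$, whose double $2C$ is singular along $C$ and hence at all five points. Since an irreducible plane quartic has arithmetic genus three and thus cannot acquire five nodes, the double conic is the only quartic singular at five general points, so in each case the system is one-dimensional, i.e.\ special with cokernel of dimension one.

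For all other pairs I would show the system is non-special. First consider $m \le 2$: a single general double point imposes three independent conditions whenever $\binom{d+2}{2} \ge 3$, and two general double points impose six independent conditions once $d \ge 3$, so the only failure among $m \le 2$ is the pair $(2,2)$ already treated. Next, for $m \ge 3$ and $d \ge 6$ the system $\mathcal L(d; 2^m)$ is automatically in standard form, since $d \ge 6 = 2+2+2$, and is therefore non-special by the cited result of \cite{DL}.

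It remains to treat $m \ge 3$ with $d \le 5$. For $d \le 3$ and $m \ge 2$, the Bézout reduction of Remark~\ref{bezout} applies because $d < b_1 + b_2 = 4$, replacing $\mathcal L(d; 2^m)$ by $\mathcal L(d-1; 1,1,2^{m-2})$; iterating and reordering brings us down to lines and conics, whose dimensions are immediate. For $d = 4$ a single quadratic Cremona transformation (Lemma~\ref{lem:Cremona}) centred at three of the points has $m = d-6 = -2$ and gives $\dim \mathcal L(4; 2^m) = \dim \mathcal L(2; 2^{m-3})$; comparison with the $d=2$ analysis establishes non-specialness for every $m \ne 5$ and recovers exactly the exception, since the orbit is $\mathcal L(4; 2^5) \to \mathcal L(2; 2^2)$. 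For $d = 5$ the transformation has $m = -1$ and produces $\mathcal L(4; 1^3, 2^{m-3})$; after reordering, this is either already in standard form, so that \cite{DL} applies, or a further Cremona or Bézout step lowers the degree, and each resulting system is checked to be non-special.

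The main obstacle I anticipate lies in this low-degree range $4 \le d \le 5$: the Cremona transformations immediately destroy the pure double-point structure, producing mixed multiplicities, so one cannot recurse on the stated theorem and must instead follow the full reduction until it terminates in standard form or in an explicitly computable base case. The crux is to confirm that the only place the reduction can stall at a special system is the single orbit $\mathcal L(4; 2^5) \to \mathcal L(2; 2^2)$; verifying that no other mixed system of degree at most five appearing in the process is special, while checking the applicability condition $b_i + m \ge 0$ at each Cremona step and correctly discarding entries that drop to zero, is where the genuine work will be.
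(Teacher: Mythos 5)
The paper never proves this statement: Theorem \ref{thm:AH} is quoted as a known special case of the Alexander--Hirschowitz theorem, with a citation to \cite{AH} and no argument. So your self-contained proof is by definition a different route, and it is a reasonable one: you derive the planar case from exactly the toolkit the paper assembles for its own main theorems --- the B\'ezout reduction of Remark \ref{bezout}, the Cremona move of Lemma \ref{lem:Cremona} with the preservation property of Remark \ref{preserve}, and the standard-form criterion of \cite{DL}. Your case split is sound: $m \le 2$ and $d \le 3$ by hand; for $m \ge 3$ and $d \ge 6$ the system $\mathcal L(d;2^m)$ is in standard form (since $d \ge 2+2+2$), hence non-special; for $d = 4$ one Cremona step (legal, since $b_i + m = 2 - 2 = 0 \ge 0$) gives $\dim \mathcal L(4;2^m) = \dim \mathcal L(2;2^{m-3})$, which simultaneously settles all $m \ne 5$ and exhibits the exceptional orbit $\mathcal L(4;2^5) \to \mathcal L(2;2^2)$; for $d=5$ the reduction terminates correctly ($m=3,4$ land in standard form, $m=5$ needs one more Cremona to reach $\mathcal L(3;1^4)$, and $m \ge 6$ one more to reach $\mathcal L(2;2^{m-6},1^3)$, all non-special). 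What the paper's citation buys is brevity and the full generality of \cite{AH} in $\PP^n$; what your argument buys is self-containedness in the plane, showing the result is internal to the machinery already on the table.

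Two points need tightening before this is a complete proof. First, your uniqueness claim for $(d,m)=(4,5)$ is incomplete as stated: the genus argument excludes only \emph{irreducible} quartics with five nodes, not reducible ones. The clean fix is B\'ezout against the conic $C$ through the five points (a quartic singular at all five meets $C$ in at least $10 > 8$ points, so contains $C$, and the residual conic passes through all five points because $C$ is smooth there, hence equals $C$); alternatively, note that your own Cremona step makes the direct verification unnecessary, since $\dim \mathcal L(2;2^2)=1$ (a conic singular at two points must be a double line) already forces $\dim \mathcal L(4;2^5)=1$ via Lemma \ref{lem:Cremona} and Remark \ref{preserve}. Second, be aware that B\'ezout steps, unlike Cremona steps, do \emph{not} preserve the virtual dimension $\binom{d+2}{2} - \sum \binom{b_i+1}{2}$ (it can strictly increase, e.g.\ $\mathcal L(2;2^2) \to \mathcal L(1;1,1)$ goes from $0$ to $1$); so at the end of a B\'ezout chain you must compare the computed dimension with the expected dimension of the \emph{original} system, not merely observe that the terminal system is non-special. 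In your $d \le 3$ cases this comparison does check out, but it is the step that must be stated; as you anticipate, this bookkeeping in the range $d \le 5$ is the real content, and it is routine rather than conceptual.
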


\begin{remark} \label{bar r} 
If $0 \neq L \in R$ is a linear form and $k >0$, then we have for $\bar R = R/L^k R$ that 
$\dim [\bar R]_j = kj + 1 - \binom{k-1}{2}$ if $j \geq k$. If $j \leq k-1$, $\displaystyle \dim [\bar R]_j = \binom{j+2}{2}$. In particular, for $k=3$ we have $\dim [ \bar R]_j = 3j$ for $j \geq 1$ but not for $j=0$. On the other hand, for $k=2$ we have $\dim [\bar R]_j = 2j+1$ for all $j \geq 0$.
\end{remark}

\begin{remark} \label{binom}
For any integer $k \geq 0$,  we have $\displaystyle \binom{k+2}{2} - \binom{k}{2} = 2k+1$ and 
 $\displaystyle \binom{k+3}{2} - \binom{k}{2} = 3k+3$.
\end{remark}


\section{An exchange property}

We begin with a useful result relating the SLP for certain algebras to WLP for others. This idea has been studied in difference settings, e.g. \cite[Proposition 2.1]{MMN2}, but we formalize it here.

\begin{proposition}
    \label{lem:exchange}
Let $A$ be a standard graded artinian $K$-algebra over an infinite field $K$. Consider general elements $\ell, L \in [A]_1$. Fix positve integers $b$ and  $k$.  Then one has: 

\begin{itemize}

   \item[(a)] Assume $A$ has the WLP and  $b \ge k$.  If multiplication by $\ell^k$ on both $A$ and $A/L^b A$ has maximal rank (in each degree), then multiplication by $L^b$ has 
                  maximal rank on $A/\ell^k A$ (in each degree). 
   
   \item[(b)] If multiplication by $\ell^k$ on $A/L^b A$ has maximal rank (in each degree) and multiplication by $L^b$ on $A$  has maximal rank (in each degree), then multiplication by 
                  $L^b$ has maximal rank on $A/\ell^k A$ (in each degree). 

\end{itemize} 
\end{proposition}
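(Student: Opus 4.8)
The plan is to reduce everything to a single commutative square of multiplication maps and then analyze injectivity and surjectivity degree by degree. Since $A$ is commutative, $\ell^k$ and $L^b$ commute, so for each $d$ I have a commuting square whose horizontal maps are $\times \ell^k$ and whose vertical maps are $\times L^b$, built on $[A]_{d-b-k}, [A]_{d-b}, [A]_{d-k}, [A]_d$. Passing to the cokernels of the two horizontal $\times\ell^k$ maps exhibits $\times L^b \colon [A/\ell^k A]_{d-b} \to [A/\ell^k A]_d$ — exactly the map I must control — as the induced map on cokernels; dually, passing to the cokernels of the two vertical $\times L^b$ maps realizes the hypothesis map $\times \ell^k \colon [A/L^b A]_{d-k} \to [A/L^b A]_d$. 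The crucial bookkeeping observation is that these two induced maps have the \emph{same} cokernel in degree $d$, namely $[A/(\ell^k, L^b) A]_d$, because $(A/\ell^k A)/L^b(A/\ell^k A) = A/(\ell^k,L^b)A = (A/L^b A)/\ell^k(A/L^b A)$.

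This shared cokernel handles surjectivity uniformly in both parts. By hypothesis $\times \ell^k$ on $A/L^b A$ has maximal rank, so in each degree $d$ it is injective or surjective. If it is surjective then $[A/(\ell^k, L^b)A]_d = 0$; since this is also the cokernel of $\times L^b$ on $A/\ell^k A$ in degree $d$, that map is surjective too, hence of maximal rank. Thus in both (a) and (b) it remains only to treat the degrees $d$ where $\times \ell^k$ on $A/L^b A$ is injective but not surjective, and there I must show $\times L^b \colon [A/\ell^k A]_{d-b} \to [A/\ell^k A]_d$ is injective. For this I would chase an element: if $\bar f$ lies in the kernel, with $f \in [A]_{d-b}$, then $L^b f = \ell^k g$ for some $g \in [A]_{d-k}$; reading this modulo $L^b$ gives $\ell^k \bar g = 0$ in $A/L^b A$, so injectivity of $\times \ell^k$ there forces $\bar g = 0$, i.e. $g = L^b h$. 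Hence $L^b(f - \ell^k h) = 0$, so $w := f - \ell^k h \in (0 :_A L^b)_{d-b}$, and it suffices to prove $w \in \ell^k A$. In case (b) this is immediate: since $[A/(\ell^k,L^b)A]_d \neq 0$ is a quotient of $[A/L^b A]_d$, the map $\times L^b \colon [A]_{d-b} \to [A]_d$ is not surjective, so by its assumed maximal rank it is injective, whence $(0 :_A L^b)_{d-b} = 0$ and $w = 0$.

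The main obstacle is closing this same step in part (a), where $\times L^b$ on $A$ is \emph{not} assumed to have maximal rank and the annihilator $(0 :_A L^b)_{d-b}$ can be nonzero. Here I would exploit the WLP of $A$: maximal rank of $\times \ell$ in every degree forces the Hilbert function $h_e := \dim_K[A]_e$ to be unimodal, while the assumed maximal rank of $\times \ell^k$ on $A$ pins down $\dim_K [A/\ell^k A]_e = \max\{0,\, h_e - h_{e-k}\}$. Using unimodality to locate both the degree where $w$ survives modulo $\ell^k A$ (which forces $h$ to increase strictly across a window ending at $d-b$) and the degree where the chain $w, Lw, \dots$ dies (which forces a strict decrease past the peak), together with the hypothesis $b \ge k$, I would show $w$ cannot avoid $\ell^k A$. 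Concretely this amounts to reducing part (a) to the Hilbert-function identity
\[
\max\{0,\, \dim[A/L^b A]_d - \dim[A/L^b A]_{d-k}\} = \max\{0,\, \dim[A/\ell^k A]_d - \dim[A/\ell^k A]_{d-b}\}
\]
and verifying it from unimodality. Controlling the left-hand side, which involves $\dim[A/L^b A]$ that the WLP does not directly compute, is the delicate point, and it is precisely where the extra hypotheses $b \ge k$ and the WLP must be brought to bear.
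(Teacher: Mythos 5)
Your reduction via the shared cokernel $[A/(\ell^k,L^b)A]_d$ and your element chase are sound, and they completely settle part (b): in the non-surjective degrees you correctly use maximal rank of $\times L^b$ on $A$ to kill $(0:_A L^b)_{d-b}$, which is an element-level version of the paper's Snake Lemma argument. The problem is part (a), where your write-up is an unfinished plan rather than a proof. You reduce (a) to showing that the element $w \in (0:_A L^b)_{d-b}$ lies in $\ell^k A$, equivalently to a Hilbert-function identity, and then say you ``would'' verify it from unimodality, conceding that controlling $\dim [A/L^bA]$ is ``the delicate point.'' That concession is exactly the gap: under the hypotheses of (a) the ranks of $\times L^b$ on $A$ are not assumed maximal, so $\dim [A/L^bA]_d$ is simply not a function of the Hilbert function of $A$, and no amount of unimodality bookkeeping by itself can close the identity you wrote down. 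The identity is equivalent to the conclusion, so ``reducing'' to it is not progress unless you supply a new input.

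The missing input, which is how the paper finishes (a), is to prove that $\times L^b\colon [A]_{d-b} \to [A]_d$ is itself \emph{injective} in the only degrees that matter (those where $[A/\ell^kA]_{d-b}$, $[A/\ell^kA]_d$ and $[A/(\ell^k,L^b)A]_d$ are all nonzero); this gives $w=0$ outright in your notation. The argument has three steps. First, since $[A/\ell^kA]_d \neq 0$, the map $\times \ell^k\colon [A]_{d-k}\to[A]_d$ is not surjective, hence injective by the maximal rank hypothesis on $A$. Second, because $\ell$ and $L$ are \emph{both} general linear forms, $\times L^k\colon [A]_{d-k}\to[A]_d$ has the same (generic) rank, hence is also injective; this exchange of $\ell^k$ for $L^k$ in the same pair of degrees is the step your outline never contemplates. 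Third, injectivity of $\times\ell^k$ into degree $d$ forces $d-k$ to lie below the peak of the (unimodal, by WLP) Hilbert function, so the WLP makes $\times L\colon [A]_{i-1}\to[A]_i$ injective for all $i \le d-k$; since $b \ge k$, the map $\times L^{b-k}\colon [A]_{d-b}\to[A]_{d-k}$ is a composition of such injections. Composing, $\times L^b = (\times L^k)\circ(\times L^{b-k})$ is injective on $[A]_{d-b}$, so $w = 0$ and your chase closes. Without the factorization $L^b = L^{b-k}\cdot L^k$ and the generality swap, the hypotheses $b\ge k$ and WLP cannot be ``brought to bear'' in the way you hope, so as written part (a) remains unproved.
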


\begin{proof}  
(a) For any integer $j$ consider the commutative diagram
\begin{align} 
   \label{eq:diagram}
\begin{CD}
[A]_{j-k} @>\ell^k>> [A]_{j} @>>> [A/\ell^k]_{j} @>>> 0 \\
@ VV{L^b}V   @ VV{L^b = \psi}V  @ VV{L^b = \ffi}V  \\
 [A]_{j-k+b} @>\ell^k>> [A]_{j+b} @>>> [A/\ell^k]_{j+b} @>>> 0 \\
 @ VVV @  VVV @  VVV \\
 [A/L^b A]_{j-k+b} @>\ell^k = \gamma>> [A/L^b A]_{j+b} @>>> [A/(\ell^k, L^b)A]_{j+b} @>>> 0 \\
  @ VVV @  VVV @  VVV \\
  0 && 0 && 0. 
  \end{CD}
\end{align}
We want to show that the rightmost vertical multiplication map, call it $\ffi$,  has maximal rank. This is clear if  any of the modules in the vertical sequence on the right is zero. Assume none of these modules is trivial. Since the three horizontal multiplication maps have maximal rank by assumption it follows that they are injective. Thus, the Snake Lemma shows that injectivity of the map $\ffi$ follows, once we know that the vertical multiplication in the middle, call it $\psi$,  is injective. 

Since $A$ has the WLP its Hilbert function is unimodal. Let $q$ be the largest integer such that $\dim [A]_{q-1} \le \dim [A]_q$.  The fact that the map 
$\times \ell^k : [A]_{j-k+b} \rightarrow [A]_{j+b}$ is injective forces $j-k+d < q$. Thus, the Hilbert function of $A$ is non-decreasing on the closed interval from zero to $j-k+d$. 
The WLP  of $A$ now gives that $\times L^{b-k}: [A]_j \to [A]_{j-k+b}$ is a composition of $b-k$ injective multiplications by $L$. Injectivity of the middle multiplication by $\ell^k$ 
implies that  $\times L^k : [A]_{j-k+b} \rightarrow [A]_{j+b}$ is injective as well. Hence, as composition of injective maps,  $\psi$ is injective, as desired. 

(b) This is easier and similar to the proof of \cite[Proposition 2.1]{MMN2}. Again we may assume that each of the  modules in the vertical sequence on the right is not zero. Since the bottom multiplication by $\ell^k$, call it $\gamma$, has maximal rank by assumption it must be injective. Furthermore, we assumed that $\psi$ has maximal rank. If it is surjective, then so is $\ffi$. If $\psi$ is injective, then the Snake Lemma yields that $\ffi$ also is injective. 
\end{proof}

\begin{corollary}
    \label{cor:square}
If both $A$ and $A/\ell^2$ have the WLP and multiplication by $\ell^2$ has maximal rank on both $A$ and $A/L^b A$ for every $b \ge 2$, then $A/\ell^2 A$ has the SLP. 
\end{corollary}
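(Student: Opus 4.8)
The plan is to read the corollary as an immediate consequence of Proposition~\ref{lem:exchange}(a), specialized to $k = 2$, combined with the assumed WLP of $A/\ell^2 A$.

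First I would unwind what must be shown. For a general linear form $L$, the algebra $A/\ell^2 A$ has the SLP precisely when $\times L^b : [A/\ell^2 A]_{j} \to [A/\ell^2 A]_{j+b}$ has maximal rank in every degree $j$, for every $b \ge 1$. The exponent $b = 1$ is nothing but the WLP of $A/\ell^2 A$, which is one of the hypotheses, so this case is free and I would dispose of it at the outset.

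For $b \ge 2$ I would apply Proposition~\ref{lem:exchange}(a) with $k = 2$ to the fixed general pair $\ell, L$. All of its hypotheses are in force: $A$ has the WLP; the required inequality reads $b \ge k = 2$; multiplication by $\ell^2$ has maximal rank on $A$; and multiplication by $\ell^2$ has maximal rank on $A/L^b A$ --- this last being exactly the assumption made ``for every $b \ge 2$.'' The proposition then delivers that $\times L^b$ has maximal rank on $A/\ell^2 A$ for each $b \ge 2$. Combining this with the $b = 1$ case shows that $\times L^b$ has maximal rank for all $b \ge 1$, which is precisely the SLP.

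There is essentially no genuine obstacle here, since the work has already been done inside Proposition~\ref{lem:exchange}(a); the only point worth flagging is that the SLP asks for a single general $L$ that works simultaneously for all exponents $b$. This is automatic in the present setup, because the hypotheses already fix one general pair $\ell, L$ for which the maximal rank of $\times \ell^2$ on $A/L^b A$ holds for every $b \ge 2$, and applying the proposition with this same $L$ therefore produces one linear form that handles all $b$ at once. (One may additionally observe that, $A$ being artinian, only finitely many exponents $b$ impose a nontrivial condition, so no issue of choosing a common general $L$ can arise.)
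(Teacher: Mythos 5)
Your proposal is correct and is exactly the argument the paper intends: the corollary is stated without proof as an immediate consequence of Proposition~\ref{lem:exchange}(a), applied with $k=2$ for each $b \ge 2$, with the $b=1$ case covered by the assumed WLP of $A/\ell^2 A$, just as you do. Your closing remark about a single general $L$ sufficing (since $A$ is artinian, only finitely many $b$ matter) is a sensible extra precaution, consistent with the paper's conventions.
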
 


\section{Some calculations} \label{gen results}

Before presenting  the main results in the coming sections, we give some preparatory calculations that will simplify the later arguments.

\begin{remark} \label{narrow down degree}
Let $R = K[x,y,z]$ and $I = (L_1^{a_1},\dots, L_s^{a_s})$ where $s \geq 3$. Let $\underline{h} = (h_i)_{0 \leq i \leq d}$ be the Hilbert function of $R/I$, where $d$ is the last degree where $R/I$ is not zero. Since $R/I$ has the weak Lefschetz property by  Theorem \ref{SS theorem}, it follows (see \cite{HMNW}) that  there is a  unique integer $p$ such that
\[
h_{p-1} < h_p \geq h_{p+1}.
\]
Moreover, we get,  for a general linear form $L$, that the multiplication map $\times L : [R/I]_{j-1} \rightarrow [R/I]_j$ is injective for $j \leq p$ and surjective for $j \geq p+1$.  We will use this to narrow down our study of the multiplication maps $\times L^2$ and $\times L^3$. 

First we consider $\times L^2$. The above observations imply  that $\times L^2 : [R/I]_{j-2} \rightarrow [R/I]_j$ is injective if $j \leq p$ and surjective if $ j \ge p+2$. Thus, for studying  whether $\times L^2 : [R/I]_{j-2} \rightarrow [R/I]_j$ has maximal rank for all $j$, it remains to consider the map $\times L^2 : [R/I]_{p-1} \rightarrow [R/I]_{p+1}$. 

Similarly, if we are studying whether $\times L^3 : [R/I]_{j-3} \rightarrow [R/I]_j$ has maximal rank for all $j$,  then we know that this is true if $j \le p$ or $j \ge p+3$. Thus, it remains to 
consider the map if  $j=p+1$ or $j=p+2$. 
\end{remark}

The following result will simplify many of the calculations in the rest of the paper. 

\begin{proposition} 
   \label{magic}
Let $R = K[x,y,z]$ and let $I = (L_1^{a_1}, \dots, L_s^{a_s}) \subset R$, where the $L_i$ are linear forms and $2 \leq a_1 \leq \dots \leq a_s$.  Let $L$ be a general linear form and 
let $k \geq 1$ and $j \geq \max \{ k, a_s \}$ be  positive integers. If
{\small 
\[
\dim [R/(I,L^k)]_j = \left [ jk + 1 - \binom{k-1}{2} \right ] - \sum_{a_i \le j-k} \left [ k(j-a_i) + 1 - \binom{k-1}{2}  \right ]  - \sum_{a_i > j-k} \binom{j-a_i + 2}{2}
\]  }
then $\times L^k : [R/I]_{j-k} \rightarrow [R/I]_j$ is injective. Furthermore, $I$ is minimally generated by the given $s$ powers. 
\end{proposition}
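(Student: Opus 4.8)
The plan is to recognize the right-hand side of the displayed identity as the \emph{expected} dimension of $[R/(I,L^k)]_j$ coming from the natural presentation, and then to read the hypothesis as the assertion that this presentation map is injective. Write $\bar R = R/L^k R$. Using Remark~\ref{bar r} I would check, term by term, that since $j \ge k$ the leading term $jk+1-\binom{k-1}{2}$ equals $\dim[\bar R]_j$; that for $a_i \le j-k$ one has $j-a_i \ge k$, so $k(j-a_i)+1-\binom{k-1}{2} = \dim[\bar R]_{j-a_i}$; and that for $j-k < a_i \le a_s \le j$ one has $0 \le j-a_i \le k-1$, so $\binom{j-a_i+2}{2} = \dim[\bar R]_{j-a_i}$. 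Because $a_s \le j$, the two sums in the statement then combine and the right-hand side becomes
\[
\dim[\bar R]_j - \sum_{a_i \le j} \dim[\bar R]_{j-a_i}.
\]
Setting $\bar I = (\bar L_1^{a_1},\dots,\bar L_s^{a_s}) \subset \bar R$, so that $R/(I,L^k) = \bar R/\bar I$, the presentation of $\bar I$ yields the right-exact sequence $\bigoplus_{a_i \le j} [\bar R]_{j-a_i} \xrightarrow{\mu} [\bar R]_j \to [R/(I,L^k)]_j \to 0$, where $\mu((f_i)) = \sum_i \bar L_i^{a_i} f_i$. Hence $\dim[R/(I,L^k)]_j = \dim[\bar R]_j - \rank \mu$, and since $\rank \mu$ is at most the dimension of the domain, the hypothesis says exactly that $\mu$ is \emph{injective}. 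Both conclusions will be extracted from this.

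To prove injectivity of $\times L^k \colon [R/I]_{j-k} \to [R/I]_j$, I would start from $f \in [R]_{j-k}$ with $L^k f = \sum_i g_i L_i^{a_i} \in I$ (with $g_i \in [R]_{j-a_i}$) and reduce modulo $L^k$: this gives $\sum_i \bar g_i \bar L_i^{a_i} = 0$ in $[\bar R]_j$, so injectivity of $\mu$ forces every $\bar g_i = 0$, that is, each $g_i \in L^k R$. Writing $g_i = L^k h_i$ (where $h_i = 0$ when $j-a_i < k$) and substituting back gives $L^k f = L^k \sum_i h_i L_i^{a_i}$; cancelling $L^k$, which is legitimate since $R$ is a domain and $L \neq 0$, yields $f \in I$, as desired.

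For minimal generation I would argue by contradiction: a redundant generator $L_t^{a_t}$ produces a relation $\sum_i c_i L_i^{a_i} = 0$ in which $c_t$ is a nonzero constant and $c_i \in [R]_{a_t-a_i}$. Multiplying by a form $e \in [R]_{j-a_t}$ (available since $j \ge a_s \ge a_t$) chosen so that $\bar e \neq 0$ in $[\bar R]_{j-a_t}$ gives a degree-$j$ relation $\sum_i (ec_i) L_i^{a_i} = 0$; its reduction modulo $L^k$ is a kernel element of $\mu$ whose $t$-th coordinate $c_t\,\bar e$ is nonzero. This contradicts injectivity of $\mu$, so the $s$ powers minimally generate $I$.

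I expect the only genuine obstacle to be the first step: matching the right-hand side term by term to $\dim[\bar R]_j - \sum_{a_i \le j}\dim[\bar R]_{j-a_i}$ across the two regimes of Remark~\ref{bar r}, where the hypotheses $j \ge k$ and $j \ge a_s$ are exactly what is needed at the boundaries. Once that reinterpretation identifies the hypothesis with injectivity of $\mu$, both conclusions are formal consequences of this injectivity together with the fact that $R$ is an integral domain.
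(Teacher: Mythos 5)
Your proof is correct and takes essentially the same route as the paper: both arguments identify the dimension hypothesis with the vanishing in degree $j$ of the kernel of the presentation $\bigoplus_i \bar R(-a_i) \to \bar R$ over $\bar R = R/(L^k)$ (your injectivity of $\mu$ is exactly the paper's statement $[A]_j = 0$), and then deduce injectivity of $\times L^k$ on $R/I$ from it. The only differences are presentational: the paper packages the deduction via the Snake Lemma applied to the syzygy diagram, whereas you unwind the same diagram chase by hand (writing $g_i = L^k h_i$ and cancelling $L^k$ using that $R$ is a domain), and you make explicit the minimal-generation argument that the paper leaves implicit in the single line ``$[A]_j = 0$, so the result follows.''
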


\begin{proof}
Consider the  syzygy module $E$ of $I$, defined by the  exact sequence
\[
0 \rightarrow E \rightarrow \bigoplus_{i=1}^s R(-a_i) \stackrel{\phi}{\longrightarrow} R \rightarrow R/I \rightarrow 0
\]
where $\phi $ is the matrix $ [L_1^{a_1} \ \dots \  L_s^{a_s}]$. 
This fits in the following commutative diagram, where $\bar R = R/(L^k)$.
\begin{equation} \label{commutative diagram}
\begin{array}{cccccccccccc}
&&  && 0 && 0 \\
&&  && \downarrow && \downarrow \\
0 & \rightarrow & E(-k) & \rightarrow & \bigoplus_{i=1}^s R(-a_i -k) & \rightarrow & R(-k) & \rightarrow & R/I (-k) & \rightarrow & 0 \\
&&  && \phantom{ {{\hbox{$\scriptstyle \times L^k$}}} } \downarrow  {\hbox{$\scriptstyle \times L^k$}} &&  \phantom{ {{\hbox{$\scriptstyle \times L^k$}}} } \downarrow  {\hbox{$\scriptstyle \times L^k$}} \\
0 & \rightarrow & E & \rightarrow & \bigoplus_{i=1}^s R(-a_i ) & \rightarrow & R & \rightarrow & R/I  & \rightarrow & 0 \\
&&  && \downarrow && \downarrow \\
0 & \rightarrow & A & \rightarrow & \bigoplus_{i=1}^s \bar R(-a_i ) & \rightarrow & \bar R & \rightarrow & B  & \rightarrow & 0 \\
&&&& \downarrow && \downarrow \\
&&&& 0 && 0
\end{array}
\end{equation}
where for now $A$ and $B$ are just the kernel and cokernel of the  map given by the restriction of $\phi$ to $\bar R$.  The Snake Lemma gives the long exact sequence
\begin{equation} \label{snake lemma}
0 \rightarrow E(-k) \stackrel{\times L^k}{\longrightarrow} E \rightarrow A \rightarrow R/I(-k) \stackrel{\times L^k}{\longrightarrow} R/I \rightarrow B \rightarrow 0.
\end{equation}
(The fact that $B$ is both the cokernel of $\times L^k$ and of the bottom horizontal sequence in the commutative diagram was noted in \cite{DI}, Theorem 2.11.) Now using 
Remark \ref{bar r}, the bottom sequence in the commutative diagram gives that $[A]_j = 0$, so the result follows from (\ref{snake lemma}).
\end{proof}

For the proof of Theorem \ref{xL2} we do not actually need to know the precise value of $p$ as in Remark \ref{narrow down degree}. However, the following characterization of $p$ will be useful. 

\begin{proposition} \label{formula for p}
Let $I = (L_1^{a_1},\dots, L_s^{a_s})$. 
Let $A = R/I$ and let $L$ be a general linear form. Then 
\[
p = \reg (A/LA) = \max \left  \{ j \ | \ \sum_{a_i \leq j} (j+1-a_i) \leq j  \right \}
\]
\end{proposition}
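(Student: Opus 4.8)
The plan is to compute $A/LA$ explicitly, pin down its top nonzero degree in two independent ways, and read off both asserted equalities. First I would observe that for a general linear form $L$ the quotient $A/LA = R/(I,L)$ is naturally an algebra over $\bar R := R/(L) \cong K[u,v]$: one has $A/LA \cong \bar R/\bar I$, where $\bar I = (\bar L_1^{a_1},\dots,\bar L_s^{a_s})$ is the image of $I$ and the $\bar L_i$ are the (general, pairwise non-proportional) images of the $L_i$ in the two-variable ring $\bar R$.

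Second, I would establish $p = \reg(A/LA)$. Since $A = R/I$ has the WLP by Theorem~\ref{SS theorem}, Remark~\ref{narrow down degree} gives that $\times L : [A]_{j-1} \to [A]_j$ is injective for $j \le p$ and surjective for $j \ge p+1$. Writing $h_j = \dim[A]_j$, this yields $\dim[A/LA]_j = h_j - h_{j-1}$ for $j \le p$ and $\dim[A/LA]_j = 0$ for $j > p$; in particular $[A/LA]_p \neq 0$ because $h_{p-1} < h_p$. Hence the top nonzero degree of $A/LA$ is exactly $p$. As $A/LA$ is artinian, its only nonvanishing local cohomology is $H^0_{\mathfrak{m}}(A/LA) = A/LA$, so $\reg(A/LA)$ equals its top nonzero degree, giving $\reg(A/LA) = p$.

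Third, I would compute the Hilbert function of $\bar R/\bar I$ directly and extract the combinatorial formula. Because the $\bar L_i$ are general linear forms in the two-variable ring $\bar R$, their powers $\bar L_i^{a_i}$ impose independent conditions up to filling the ambient space; equivalently, via the $\PP^1$ analog of Theorem~\ref{emsalem-iarrobino}, $\dim[\bar R/\bar I]_j$ equals the dimension of the degree-$j$ forms on $\PP^1$ vanishing to order $j-a_i+1$ at the $s$ general points dual to the $\bar L_i$. Such forms are precisely the multiples of a fixed product of degree $\sum_{a_i \le j}(j+1-a_i)$, so
\[
\dim[A/LA]_j = \dim[\bar R/\bar I]_j = \max\left\{0,\ (j+1) - \sum_{a_i \le j}(j+1-a_i)\right\}.
\]
This is positive precisely when $\sum_{a_i \le j}(j+1-a_i) \le j$. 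Combined with the second step, $p = \max\{ j \mid [A/LA]_j \neq 0 \} = \max\{ j \mid \sum_{a_i \le j}(j+1-a_i) \le j \}$, which is the asserted formula.

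The crux, and the main obstacle, is the dimension count in the third step: verifying that powers of general linear forms in two variables behave independently, so that $\dim[\bar R/\bar I]_j = \max\{0,\ (j+1) - \sum_{a_i \le j}(j+1-a_i)\}$. I would prove this by dehomogenizing $\bar R$ with respect to one variable, so that $\bar L_i^{a_i}$ becomes $(t-c_i)^{a_i}$ with the $c_i \in K$ distinct, and analyzing the map $\bigoplus_{a_i \le j} K[t]_{\le j-a_i} \to K[t]_{\le j}$ sending $(g_i) \mapsto \sum_i (t-c_i)^{a_i} g_i$: it is injective when $\sum_{a_i \le j}(j+1-a_i) \le j+1$ (using coprimality of the $(t-c_i)^{a_i}$ and a degree/syzygy count in the PID $K[t]$) and surjective otherwise (Hermite interpolation). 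The remaining equalities are then bookkeeping; one should also record that the displayed set is nonempty (it contains $j=0$, as all $a_i$ are positive) and bounded above (since $s \ge 3$ forces $\sum_{a_i \le j}(j+1-a_i)$ to outgrow $j$), so the maximum exists, consistently with $A/LA$ being artinian.
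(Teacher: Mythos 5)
Your proof is correct and follows essentially the same route as the paper's: the first equality comes from the WLP of $A$ (via Theorem~\ref{SS theorem} and Remark~\ref{narrow down degree}), and the second from passing to $K[x,y,z]/(L) \cong K[x,y]$ and computing $\dim [A/LA]_j = \max\bigl\{0,\ j+1-\sum_{a_i \le j}(j+1-a_i)\bigr\}$ via the two-variable Emsalem--Iarrobino duality, where fat point ideals on $\PP^1$ are principal. Your additional dehomogenization/Hermite-interpolation argument is just a self-contained justification of the dimension count that the paper handles by citing the linear system $\mathcal L_1$, so the substance of the two proofs is the same.
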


In the above statement, it is understood that for $j  < \min \{ a_i\}$ the sum is zero so such a $j$ satisfies the inequality. It may even be the maximum. For example, if $s=6$ and $a_i = 5$ for all $i$, we get $p=4<a_i$.

\begin{proof}
The first equality follows because $A$ has the WLP \cite{SS}, so we only prove the second one. We note that $K[x,y,z]/L \cong K[x,y]$. Thus we have
\begin{equation*}
\begin{split}
\dim [A/LA]_j & =  \dim \mathcal L_1 (j; j+1-a_1, \dots, j+1-a_s) \\ \\
& =  \displaystyle \max \left \{ 0, j+1-  \sum_{j \geq a_i} (j+1-a_i)  \right \}
\end{split}
\end{equation*}
from which the result follows.
\end{proof}

\begin{corollary} \label{formula for p - uniform}
Assume that $a_i = t$ for all $i$. Then
\[
p =  \begin{cases} 
\left \lfloor \frac{s(t-1)}{s-1} \right \rfloor & \text{ if } s \le t \\
t - 1 & \text{ if } s \ge t+1. 
\end{cases}
\]

\end{corollary}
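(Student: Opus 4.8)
The plan is to specialize the formula for $p$ from Proposition~\ref{formula for p}, namely $p = \max\{ j \mid \sum_{a_i \le j}(j+1-a_i) \le j\}$, to the uniform case $a_i = t$ for all $i$. With $a_i = t$, every generator satisfies $a_i \le j$ precisely when $j \ge t$, and in that case each summand $j+1-a_i$ equals $j+1-t$; since there are $s$ generators, the sum becomes $s(j+1-t)$. So the defining inequality $\sum_{a_i \le j}(j+1-a_i) \le j$ reads: for $j \le t-1$ it is the empty sum $0 \le j$, which always holds; and for $j \ge t$ it becomes $s(j+1-t) \le j$. The whole problem thus reduces to solving this single linear inequality in $j$ over the integers and comparing the largest solution $\ge t$ with the trivially-satisfied range $j \le t-1$.

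First I would rearrange $s(j+1-t) \le j$ into $sj + s - st \le j$, i.e. $(s-1)j \le st - s = s(t-1)$, which for $s \ge 2$ gives $j \le \frac{s(t-1)}{s-1}$. The largest integer satisfying this is $\lfloor \frac{s(t-1)}{s-1}\rfloor$. Then I would split into the two cases of the claimed formula. In the case $s \le t$, I would verify that $\lfloor \frac{s(t-1)}{s-1}\rfloor \ge t$, so that the binding constraint genuinely comes from the $j \ge t$ branch and the maximum $p$ equals this floor; the inequality $\frac{s(t-1)}{s-1} \ge t$ is equivalent to $s(t-1) \ge t(s-1)$, i.e. $st - s \ge st - t$, i.e. $t \ge s$, which is exactly the hypothesis. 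In the complementary case $s \ge t+1$, I would check that no $j \ge t$ satisfies $s(j+1-t) \le j$ — already at $j = t$ the left side is $s \ge t+1 > t = j$, and the left side grows faster in $j$ than the right — so the maximum is achieved in the trivial range, giving $p = t-1$.

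The one subtlety to handle carefully is the boundary behavior when $t = s$, where both branches of the case distinction could plausibly apply; here $\frac{s(t-1)}{s-1} = \frac{s(s-1)}{s-1} = s = t$, so $\lfloor \cdot \rfloor = t$, and one must confirm this agrees with the first case formula (it does, since $s \le t$ holds with equality). I would also note the degenerate reading flagged in the remark following Proposition~\ref{formula for p}: when $p < t$ the sum in the defining condition is empty, but the floor formula still returns the correct value, so no separate treatment is needed. The main obstacle, such as it is, is purely bookkeeping: ensuring the floor is computed correctly and that the threshold between the two regimes is placed at exactly $s = t$ versus $s = t+1$, which the monotonicity argument for $j \ge t$ settles cleanly.
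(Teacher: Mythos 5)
Your proof is correct and is precisely the ``standard computation'' that the paper's one-line proof (citing Proposition~\ref{formula for p}) leaves to the reader: reduce the defining inequality to $(s-1)j \le s(t-1)$ for $j \ge t$ and compare the two regimes. Your handling of the boundary case $s=t$ and of the empty-sum range $j \le t-1$ is exactly the bookkeeping needed, so nothing is missing.
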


\begin{proof}
It follows from Proposition \ref{formula for p} after a standard computation.
\end{proof}

We now write the formula for $p$ in a more convenient way.

\begin{notation} \label{n_j notation}
For an $s$-tuple $N = (a_1,\ldots,a_s) \in \NN_0^s$   of non-negative integers, define 
\[
n_j := n_j (N) =  \# \{a_i \in N \; | \;  a_i \le j \} 
\]
and 
\[
p (N) := \max \left  \{ j \ | \ \sum_{a_i \leq j} (j+1-a_i) \leq j  \right \}
\]
as above. If $A = R/(L_1^{a_1},\dots,L_s^{a_s})$ and $N = \{ a_1,\dots,a_s \}$ we also write $p(A)$ for $p(N)$.
\end{notation}

\begin{lemma}
     \label{lem:rewrite}
For every integer $j$, one has 
\[
 \sum_{a_i \le j}  (j+1-a_i) = n_j + n_{j-1} + \cdots + n_0. 
\]    
\end{lemma}

\begin{proof}
Defining $n_{-1} := 0$, we compute 
\begin{equation*}
\begin{split}
\sum_{a_i \le j} (j+1-a_i) & = \sum_{k=0}^j \sum_{a_i = k} (j+1-k) \\
& = \sum_{k=1}^j (j+1-k) (n_k - n_{k-1}) \\
& = n_j + n_{j-1} + \cdots + n_0.
\end{split}
\end{equation*} 
\end{proof}

It follows that $p = p (N)$ is  defined by 
\begin{equation}
   \label{eq:def p}
n_0 +  \cdots + n_p \le p \quad \text{and} \quad n_0 +  \cdots + n_p + n_{p+1}  \ge p+2.
\end{equation}
In particular, $n_{p+1}$ must be positive. 

For simplicity of notation, we write $N + b$ for the $(s+1)$-tuple $(a_1,\ldots,a_s, b) $, where $b \ge 0$ is an integer. 
We now record the following observation. 

\begin{lemma}
     \label{lem:p not changing}
For every integer $b \ge 0$, one has: 
\begin{itemize}

    \item[(a)] $p(N+b) \le p (N)$. 
    
    \item[(b)]  Suppose $p = p (N) \ge 2$. If $n_{p-1} = 0$  and $n_p \le 1$, then $p (N+b) = p$ for every integer $b \ge 2$.  

\end{itemize}
\end{lemma}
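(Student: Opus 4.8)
The plan is to isolate a single bookkeeping identity describing how the partial sums $\sigma_j := \sum_{a_i \le j}(j+1-a_i)$ behave when one more power is appended to the tuple, and then read off both parts from it. Writing $n_j' := n_j(N+b)$ for the counting function of $N+b$, the appended element $b$ increases the count only in degrees $\ge b$, so $n_j' = n_j + 1$ for $j \ge b$ and $n_j' = n_j$ otherwise. Feeding this into Lemma~\ref{lem:rewrite}, or simply noting that the sum for $N+b$ differs from that for $N$ by the single extra term $j+1-b$ present exactly when $b \le j$, I would establish
\[
\sigma_j(N+b) = \sigma_j(N) + \max\{0,\, j+1-b\} \qquad \text{for every integer } j.
\]
This identity is the engine for everything that follows.

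Part (a) is then immediate: the identity gives $\sigma_j(N+b) \ge \sigma_j(N)$ for all $j$, so any $j$ satisfying the defining inequality $\sigma_j(N+b) \le j$ for $N+b$ also satisfies $\sigma_j(N) \le j$. Hence the set whose maximum is $p(N+b)$ is contained in the set whose maximum is $p(N)$, and taking maxima yields $p(N+b) \le p(N)$.

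For part (b), by (a) it remains only to prove the reverse inequality $p(N+b) \ge p$, and for that it suffices to check that $j = p$ lies in the defining set for $N+b$, i.e. that $\sigma_p(N+b) \le p$. First I would use the hypotheses to pin down $\sigma_p(N)$: since $n_j$ is non-decreasing in $j$ and $n_{p-1} = 0$, all of $n_0,\dots,n_{p-1}$ vanish, so Lemma~\ref{lem:rewrite} gives $\sigma_p(N) = n_p \le 1$. Now evaluate the identity at $j = p$. If $b > p$ the extra term vanishes and $\sigma_p(N+b) = \sigma_p(N) \le p$, the last inequality being the defining property \eqref{eq:def p} of $p = p(N)$. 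If instead $2 \le b \le p$, then
\[
\sigma_p(N+b) = \sigma_p(N) + (p+1-b) = n_p + (p+1-b) \le 1 + (p+1-b) = p + (2-b) \le p,
\]
using $n_p \le 1$ and $b \ge 2$. Either way $\sigma_p(N+b) \le p$, whence $p(N+b) \ge p$, and combined with (a) we conclude $p(N+b) = p$.

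There is no serious obstacle here; the content is entirely the increment identity together with the observation that the two hypotheses $n_{p-1}=0$ and $n_p \le 1$ force $\sigma_p(N) \le 1$. The only point needing care is the case split on $b$: the increment $p+1-b$ contributed by the new generator must be absorbed, and it is precisely the bound $n_p \le 1$ combined with $b \ge 2$ that makes $\sigma_p(N+b) \le p$ hold on the nose. The standing assumption $p \ge 2$ is what guarantees that $n_{p-1}$ refers to a nonnegative degree and that the range $2 \le b \le p$ is the genuinely delicate case.
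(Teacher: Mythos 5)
Your proof is correct and follows essentially the same route as the paper: both arguments rest on the observation that appending $b$ to $N$ can only increase the partial sums $\sum_{a_i\le j}(j+1-a_i)$ (giving (a)), and that under the hypotheses $n_{p-1}=0$, $n_p\le 1$ the increase at $j=p$ is at most $p-1+n_p\le p$ when $b\ge 2$ (giving (b)). The only difference is cosmetic: you package the computation as an increment identity for the sums, while the paper bounds the counting functions $n_j'$ termwise via Lemma~\ref{lem:rewrite}.
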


\begin{proof} 
This is clear if $b \ge p+2$. Set $n_j' := n_j (N + b)$. Then the definition of $p$ gives 
$
n_2' +  \cdots + n_p' + n_{p+1}'  \ge p+2$. This implies claim (a). 

Moreover, if the assumptions of (b) are satisfied, we get for every $b \ge 2$ that $n_2' +  \cdots + n_p'  \le 1 + \cdots 1 + 1 + n_p \le p$, which implies $p (N+b) = p$.
\end{proof}


\section{Multiplication by $L^2$} \label{L2 section}

We will generalize the following theorems (changing the notation for consistency with our work here).

\begin{proposition} [\cite{MMN2}, Proposition 4.7]
Let $L_1,\dots,L_4,L$ be general linear forms in $K[x,y,z]$. Let $I = (L_1^{a_1}, \dots, L_4^{a_4})$. Then for each $j$, the homomorphism $\times L^2 : [R/I]_{j-2} \rightarrow [R/I]_j$ has maximal rank.
\end{proposition}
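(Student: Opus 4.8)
The plan is to reduce to a single multiplication map and then translate it into the dimension of one explicit linear system of fat points. By Theorem~\ref{SS theorem} the ring $R/I$ has the WLP, so by Remark~\ref{narrow down degree} there is a unique peak $p$ with $h_{p-1} < h_p \ge h_{p+1}$, the map $\times L^2$ is automatically injective for $j \le p$ and surjective for $j \ge p+2$, and only $\times L^2 : [R/I]_{p-1} \to [R/I]_{p+1}$ remains to be analyzed. This map has maximal rank if and only if its cokernel $[R/(I,L^2)]_{p+1}$ has the expected dimension $\max\{0,\, \dim[R/I]_{p+1} - \dim[R/I]_{p-1}\}$.

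First I would separate two regimes. If $\dim[R/I]_{p-1} \le \dim[R/I]_{p+1}$, I would apply Proposition~\ref{magic} with $k = 2$ and $j = p+1$: verifying its dimension formula for $\dim[R/(I,L^2)]_{p+1}$ yields injectivity directly, hence maximal rank. If instead $\dim[R/I]_{p-1} > \dim[R/I]_{p+1}$, I would prove the map is surjective, i.e.\ that this cokernel vanishes. In either case the task is to compute $\dim[R/(I,L^2)]_{p+1}$. Using Theorem~\ref{emsalem-iarrobino} applied to $(L_1^{a_1}, \dots, L_4^{a_4}, L^2)$, this cokernel equals
\[
\dim \mathcal{L}_2\bigl(p+1;\ p+2-a_1,\, p+2-a_2,\, p+2-a_3,\, p+2-a_4,\, p\bigr),
\]
a system of five general fat points in which the point $P_5$ dual to $L$ carries the large multiplicity $p = (p+1) - 2 + 1$, and where any non-positive multiplicity is dropped.

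To evaluate this system I would exploit that large multiplicity. Whenever a second multiplicity is at least $2$, the two largest multiplicities sum to more than $p+1$, so B\'ezout (Remark~\ref{bezout}) lowers the degree and those two multiplicities each by one; iterating peels the system down. A Cremona transformation (Lemma~\ref{lem:Cremona}) applied to the three largest remaining multiplicities then collapses it onto very few effective points, where the dimension is forced to zero or can be read off directly, with any residual double-point configuration handled by Alexander--Hirschowitz (Theorem~\ref{thm:AH}). In parallel I would compute the target $\max\{0, \dim[R/I]_{p+1} - \dim[R/I]_{p-1}\}$ from the Hilbert function of $R/I$ near its peak, using the characterization of $p$ in Proposition~\ref{formula for p} together with the binomial identities of Remark~\ref{binom}, and check that the two values agree.

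The main obstacle I expect is the case analysis governed by how the $a_i$ are distributed relative to $p$ --- namely how many of the $p+2-a_i$ are positive and how large they are --- since this controls how many B\'ezout steps are available and which three multiplicities survive the Cremona step. The delicate point is keeping the reduced system non-special: for $k=2$ the relevant residual systems stay non-special (or the possible Alexander--Hirschowitz exceptions do not obstruct maximal rank), which is precisely the feature that fails for $\times L^3$ and produces the exceptional degree in Theorem~\ref{thm:intro1}(b).
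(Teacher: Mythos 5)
Your proposal follows essentially the same route as the paper's proof of its Theorem~\ref{xL2} (of which this proposition is the $s=4$ case): reduce via the WLP and Remark~\ref{narrow down degree} to the single map in degree $p+1$, convert the cokernel $[R/(I,L^2)]_{p+1}$ into the fat-point system $\mathcal{L}_2(p+1;\,p,\,p+2-a_i)$ by Theorem~\ref{emsalem-iarrobino}, peel it down by B\'ezout using the multiplicity-$p$ point, and match the result against the injectivity criterion of Proposition~\ref{magic}, with failure of any reduction step yielding surjectivity instead. The only cosmetic difference is your invocation of Cremona transformations and Alexander--Hirschowitz, which are not needed here: after the B\'ezout steps all remaining points are simple, and general simple points impose independent conditions on any non-empty system, exactly as in the paper.
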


\begin{theorem}[\cite{MM},  Theorem 4.4]
Let $L_1, \dots,L_s, L$ be general linear forms in $K[x,y,z]$, where $s \geq 4$. Let $I = (L_1^t,\dots,L_s^t)$ for $t \geq 2$. Then for each $j$, the homomorphism $\times L^2 : [R/I]_{j-2} \rightarrow [R/I]_j$
has maximal rank.
\end{theorem}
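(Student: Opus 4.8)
The plan is to reduce the assertion to the maximal rank of a single multiplication map and then to the evaluation of one linear system of plane curves. Since $R/I$ has the WLP by Theorem~\ref{SS theorem}, Remark~\ref{narrow down degree} shows that $\times L^2 : [R/I]_{j-2}\to[R/I]_j$ is injective for $j\le p$ and surjective for $j\ge p+2$, where $p$ is the peak of the Hilbert function. Hence only the map $\times L^2 : [R/I]_{p-1}\to[R/I]_{p+1}$ remains, and for it maximal rank is equivalent to the cokernel $[R/(I,L^2)]_{p+1}$ having the expected dimension
\[
\dim[R/(I,L^2)]_{p+1}=\max\bigl\{0,\ \dim[R/I]_{p+1}-\dim[R/I]_{p-1}\bigr\},
\]
because this forces the rank to equal $\min\{\dim[R/I]_{p-1},\dim[R/I]_{p+1}\}$. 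First I would compute $p$ from Corollary~\ref{formula for p - uniform}, which splits the analysis into the regimes $s\ge t+1$ (where $p=t-1$) and $s\le t$ (where $p=\lfloor s(t-1)/(s-1)\rfloor$).

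By Theorem~\ref{emsalem-iarrobino} each of the three dimensions above equals the dimension of a linear system $\mathcal L_2$ for the points dual to $L_1,\dots,L_s$ (and to $L$), with equal multiplicities determined by $p$ and $t$. In the regime $s\ge t+1$ one has $p+1=t$, so $\dim[R/I]_{p\pm1}$ are dimensions of systems with at most simple base points, while the cokernel system $\mathcal L_2(t;1^s,t-1)$ carries one fat point of multiplicity $t-1$; these are easily evaluated via Bézout (Remark~\ref{bezout}) together with Remark~\ref{binom}. In the regime $s\le t$ the base points all acquire multiplicity $\ge 2$, and the systems become genuine equimultiple fat point systems that I would evaluate by reducing them to standard form with the Cremona transformation of Lemma~\ref{lem:Cremona}; a system in standard form in $\PP^2$ is non-special, so it attains its expected dimension.

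To organize the injectivity statements cleanly I would invoke Proposition~\ref{magic} with $k=2$ and $j=p+1$: since $\binom{k-1}{2}=0$ the target formula simplifies, and verifying that $\dim[R/(I,L^2)]_{p+1}$ matches it (via the linear-system computation above) yields injectivity of the peak map directly, together with minimality of the generators. When instead $\dim[R/I]_{p+1}\le\dim[R/I]_{p-1}$, I would prove surjectivity by showing that the cokernel system is empty, again through Cremona reduction. Combining the two cases across both regimes gives maximal rank in every degree.

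The main obstacle is the honest evaluation of the equimultiple fat point systems arising when $s\le t$, and in particular guaranteeing that no special system is encountered. I expect the crux to be controlling the Cremona reduction: one must check that the hypotheses of Lemma~\ref{lem:Cremona} hold at each step so that the reduction terminates in standard form (whence non-specialty), and that the sporadic special double-point systems of Theorem~\ref{thm:AH}, namely $\mathcal L_2(4;2^5)$ and $\mathcal L_2(2;2^2)$, never survive as the final reduced system for uniform exponents — indeed they fail the standard-form inequality $j\ge b_1+b_2+b_3$, so the standard-form reduction avoids them automatically. The remaining steps, including the bookkeeping with $\dim[\bar R]_j$ from Remark~\ref{bar r}, are routine.
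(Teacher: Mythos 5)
Your skeleton --- reduce to the peak map via Remark~\ref{narrow down degree}, get injectivity by matching $\dim [R/(I,L^2)]_{p+1}$ against the formula of Proposition~\ref{magic}, get surjectivity by showing the cokernel system is empty --- is exactly the skeleton of the paper's proof (this statement is the case $a_1=\cdots=a_s=t$ of Theorem~\ref{xL2}). The gap is in the step you yourself flag as the crux. In the regime $s\le t$, by Theorem~\ref{emsalem-iarrobino} the cokernel system is
\[
[R/(I,L^2)]_{p+1}\;\cong\;\mathcal L_2\bigl(p+1;\; p,\; (p-t+2)^s\bigr),
\]
which is \emph{not} equimultiple: the point dual to $L$ carries multiplicity $p$, one less than the degree. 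Your plan to bring this to standard form via Lemma~\ref{lem:Cremona} cannot start: since $p\ge t$ here (Corollary~\ref{formula for p - uniform}), the transformation centered at the three largest multiplicities $(p,\,p-t+2,\,p-t+2)$ has $m=(p+1)-\bigl(p+2(p-t+2)\bigr)=2t-2p-3\le -3$ and sends the two multiplicities $p-t+2$ to $t-p-1\le -1<0$, violating the hypothesis $b_i+m\ge 0$ of Lemma~\ref{lem:Cremona}. Indeed no system with $b_1=(\text{degree})-1$ and two further points of positive multiplicity can even be in standard form, since then $b_1+b_2+b_3\ge \text{degree}+1$. So the claim that ``the reduction terminates in standard form'' fails for precisely the system you must evaluate, and with it both halves of your dichotomy (the match with Proposition~\ref{magic}, and the emptiness argument).

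The tool that works --- and what the paper does, with no case split on $s$ versus $t$ and for arbitrary exponents $a_i$ --- is B\'ezout (Remark~\ref{bezout}): since $p+(p-t+2)\ge p+2>p+1$, one splits off, one at a time, the lines joining the $L$-point to each other point; the $L$-point keeps multiplicity equal to degree minus one throughout, so the splitting continues until every other point has multiplicity $1$, leaving a system $\mathcal L_2(j';\,j'-1,\,1^s)$ whose dimension is elementary (Remark~\ref{binom} plus independence of conditions imposed by general simple points). Either this count matches Proposition~\ref{magic} (injectivity), or the degree or dimension hits zero along the way (surjectivity). Note also that your parenthetical argument that the special systems of Theorem~\ref{thm:AH} are avoided ``because they fail the standard-form inequality'' has the logic backwards: failing that inequality means the reduction can get \emph{stuck} at such a system, not that it avoids it. For instance, a perfectly legal Cremona transformation sends $\mathcal L_2(4;2^5)$ to $\mathcal L_2(2;2^2)$, which is special, not in standard form, and admits no further legal transformation. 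This is not a hypothetical worry: in the $\times L^3$ analysis of Theorem~\ref{xL3 general} the analogous reduction does terminate at $\mathcal L_2(2;2^2)$ in certain cases, and that is exactly the source of the failures of maximal rank there; for $\times L^2$ it is the B\'ezout argument, not standard-form reduction, that rules this out.
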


In this section we will give a  simpler proof of a more general result, using the methods in the last section. It confirms a conjecture made in \cite{MM}. After the first version of this paper was completed, R.M.\ Mir\'o-Roig kindly informed us that A. Andrade  and C. Almeida  \cite{AA} also proved the following result.


\begin{theorem} \label{xL2}
Let $R = K[x,y,z]$ and let $I = (L_1^{a_1}, \dots, L_s^{a_s})$, where the $L_i$ are general linear forms,  and  $s \geq 3$.   Let $L$ be a general linear form. 
Then for any $j$, the homomorphism $\times L^2 : [R/I]_{j-2} \rightarrow [R/I]_j$ has maximal rank.
\end{theorem}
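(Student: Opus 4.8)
The plan is to work one degree at a time and identify, for each $j$, the cokernel of $\times L^2$ with a quotient whose Hilbert function I can compute both algebraically and geometrically. Concretely, the cokernel of $\times L^2 \colon [R/I]_{j-2}\to[R/I]_j$ is $[R/(I,L^2)]_j = [\bar R/\bar I]_j$, where $\bar R = R/L^2$ and $\bar I$ is the image of $I$. First I would dispose of the case where some $a_i=1$: then $I$ contains a linear form, $R/I$ is an artinian quotient of a polynomial ring in two variables and hence has the SLP by \cite{HMNW}, so every power of $L$ has maximal rank. Thus I may assume $a_i\ge 2$ for all $i$. For a fixed $j$, generators $L_i^{a_i}$ with $a_i>j$ act trivially in degrees $\le j$, so I may replace $I$ by the ideal generated by those $L_i^{a_i}$ with $a_i\le j$; this leaves the map in degree $j$ unchanged and arranges $a_s\le j$, making Proposition~\ref{magic} available (for $j\le 1$ the source is zero and there is nothing to prove). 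The key point, which avoids ever computing the peak degree $p$, is the following dichotomy: if
\[
\dim[R/(I,L^2)]_j = \dim[\bar R]_j - \sum_i \dim[\bar R]_{j-a_i}
\]
(the right-hand side being exactly the expected value appearing in Proposition~\ref{magic}), then that proposition gives injectivity of $\times L^2$ in degree $j$, hence maximal rank, since injectivity forces the source to be no larger than the target; and if the right-hand side is negative, it suffices instead to show the cokernel vanishes. In both regimes the goal reduces to the single identity $\dim[R/(I,L^2)]_j = \max\{0,\ \dim[\bar R]_j - \sum_i \dim[\bar R]_{j-a_i}\}$, to be proved for every $j$.

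To establish this identity I would compute the left-hand side geometrically. By the Emsalem--Iarrobino formula (Theorem~\ref{emsalem-iarrobino}) applied to the $s+1$ generators $L_1^{a_1},\dots,L_s^{a_s},L^2$, one has $\dim[R/(I,L^2)]_j = \dim \mathcal{L}_2(j;\, j-1,\, \{j+1-a_i\}_{a_i\le j})$, a plane linear system with one point of multiplicity $j-1$ dual to $L$ and points of multiplicity $j+1-a_i$ dual to the $L_i$. I would evaluate this dimension by repeatedly applying B\'ezout's theorem (Remark~\ref{bezout}) to strip off the line through the two points of highest multiplicity, together with Cremona transformations (Lemma~\ref{lem:Cremona}) to lower the multiplicities, until the system is brought into standard form, where non-speciality in $\mathbb{P}^2$ is automatic by \cite{DL}; the few residual small-degree configurations would be dispatched by Alexander--Hirschowitz (Theorem~\ref{thm:AH}). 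A final binomial-coefficient bookkeeping, using Remarks~\ref{bar r} and \ref{binom}, matches the resulting value with $\max\{0,\ \dim[\bar R]_j - \sum_i\dim[\bar R]_{j-a_i}\}$, closing the argument.

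The main obstacle is precisely this geometric evaluation. A point of multiplicity $j-1$ in degree $j$ is very far from general position, so the systems $\mathcal{L}_2(j; j-1,\{j+1-a_i\})$ are typically \emph{special} as plane linear systems, and one cannot read off their dimension from a naive count; the actual dimension must be tracked through the entire B\'ezout/Cremona reduction down to a manifestly non-special base case. Carrying out this reduction uniformly in the arbitrary exponents $a_1\le\cdots\le a_s$, and verifying that the Alexander--Hirschowitz exceptional pairs $(d,m)\in\{(4,5),(2,2)\}$ never interfere with the final count, is where the substantive work lies. By comparison, reconciling the injective and surjective regimes is automatic once the cokernel dimension is known exactly, and matching the reduced geometric dimension with the algebraic formula is a routine, if tedious, identity.
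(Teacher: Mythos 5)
Your proposal is correct and its skeleton is the paper's own argument: the dichotomy you set up via Proposition~\ref{magic} (equality with the expected count forces injectivity, a vanishing cokernel forces surjectivity), combined with the Emsalem--Iarrobino identification (Theorem~\ref{emsalem-iarrobino}) of $\dim[R/(I,L^2)]_j$ with the linear system $\mathcal L_2(j;\,j-1,\{j+1-a_i\}_{a_i\le j})$, is exactly how the paper proceeds. There are two differences. First, the paper does not work degree by degree: it uses the WLP (Theorem~\ref{SS theorem}, Remark~\ref{narrow down degree}) to reduce everything to the single degree $j=p+1$, whereas you verify the identity in every degree and never compute $p$; this costs little and is perfectly sound (it even yields the slightly stronger statement that $R/(I,L^2)$ has the ``expected'' Hilbert function in all degrees). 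Second, and more usefully for you: the step you single out as the main obstacle --- tracking the dimension of the special system through a B\'ezout/Cremona reduction and policing the Alexander--Hirschowitz exceptions --- essentially dissolves, and this is precisely the simplification the paper exploits. Because the point dual to $L$ has multiplicity exactly one less than the degree, and both drop by one at each B\'ezout step, Remark~\ref{bezout} applies to the line joining that point to any other point of multiplicity at least $2$; stripping such lines one at a time reduces the system to $\mathcal L_2\bigl(d';\,d'-1,1^r\bigr)$ with $d'=j-\sum_{a_i\le j}(j-a_i)$ and $r=\#\{a_i\le j\}$, and then one only needs that $r$ general simple points impose independent conditions on the manifestly non-special single-fat-point system, whose dimension is $2d'+1$ by Remark~\ref{binom}; this gives $\max\{0,\,2d'+1-r\}$, which is exactly your expected value. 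If at any stage the degree drops to zero with conditions remaining, the cokernel vanishes and surjectivity holds, which is the other horn of your dichotomy. In particular no Cremona transformations, no standard-form criterion, and no Alexander--Hirschowitz input are needed for $\times L^2$; those tools enter the paper only for the $\times L^3$ results (Theorems~\ref{xL3} and \ref{xL3 general}), where the fat point dual to $L$ has multiplicity two less than the degree and the stripping can only reduce the other points to double points.
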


\begin{proof}
Without loss of generality we can (and will) assume that $a_i \leq p+1$ for all $i$ (by Remark \ref{narrow down degree}, by removing generators of larger degree). 
Our goal is to show that  for each $j$, either the kernel or the cokernel of the homomorphism is zero, and we know by Remark \ref{narrow down degree} that it is enough to show it for $j=p+1$. What we will show is a bit stronger, namely that either $[A]_{p+1} = 0$ or $[B]_{p+1} = 0$, where $A$ and $B$ are given in the commutative diagram (\ref{commutative diagram}) and $p$ is the integer defined in  Remark \ref{narrow down degree}. This will prove what we need.

We now make a formal computation. At several steps the validity of the assertion needs to be explained, and will be addressed afterwards; essentially whenever the step is not valid, it will turn out that this means that we are done with what we want to show. We set $D = \sum_{i=1}^s a_i $. 
By Theorem~\ref{emsalem-iarrobino} we have
\begin{equation*}
\begin{split}
\dim [B]_{p+1}
& =  \dim [R/(L_1^{a_1},\dots, L_s^{a_s},L^2)]_{p+1} \\ 
& =  \dim \mathcal L (p+1; p-a_1+2,\dots, p-a_s+2, p) \\ 
& =   \dim \mathcal L_2 (p+1 -\sum_{i=1}^s (p-a_i+1); 1^s, p - \sum_{i=1}^s (p-a_i+1) ) \\ 
& =    \dim \mathcal L_2 (p+1 -\sum_{i=1}^s (p-a_i+1); p - \sum_{i=1}^s (p-a_i+1) ) - s \\ 
& =   \binom{p+3-\sum_{i=1}^s (p-a_i +1)}{2} - \binom{p+1-\sum_{i=1}^s (p-a_i +1)}{2} -s \\ 
& =   2 \left [ p+1 - \sum_{i=1}^{s} (p-a_i+1) \right ] +1-s \\ 
& =  2 \left [p+1-s(p+1) +D \right ] +1-s \\ 
& =  2(p+1) +1-s - 2s(p+1)+2D.
\end{split}
\end{equation*}

We now comment on key steps in the above formal computation.

\begin{itemize}
\item[Line 2:] Notice that by our assumption that $a_i \leq p+1$ (Remark \ref{narrow down degree}), we have $p-a_i+2 \geq 1$ for all $i$. 

\smallskip

\item[Line 3:] The goal is to reduce all of the multiplicities to 1 except for the last one, using Bezout's theorem (Remark \ref{bezout}), splitting off lines joining the point with (original) multiplicity $p$ to each of the other points, one at a time. Since the last multiplicity is always one less than the degree of the curves, this can be done as long as the degree of the curves is positive and the other multiplicity is $> 1$. Thus if the degree remains positive throughout, line 3 is valid. If it ever happens that the degree reduces to zero, this dimension is zero. But this means that $\times L^2$ is surjective in the desired degree, which is what we wanted to show. So we can assume that line 3 is valid.

\smallskip

\item[Line 4:] Since all the linear forms are general, the dual points are general. But $t$ general simple points impose up to $t$ independent conditions on any non-empty linear system. If we reach a point where the linear system is empty, again the dimension is zero and we are done. So again, we can assume that line 4 is valid.

\smallskip

\item[Line 5:] Given a complete linear system of plane curves of degree $d$, the requirement that the curves have multiplicity $m$ at any point $P$ imposes $\binom{m+1}{2}$ independent conditions. 

\smallskip

\item[Line 6:] This uses Remark \ref{binom}.
\end{itemize}

Now we compute the right-hand side of the equation in Proposition \ref{magic}. 
\[
2(p+1)+1 - \sum_{i=1}^s [2(p+1-a_i)+1]  = (2p+3) -  2s(p+1) + 2D - s .
\]
Comparing with the computation that we have previously completed, we  are done by using Proposition \ref{magic}. Thus $\times L^2$ has maximal rank.
\end{proof}


\section{Multiplication by $L^3$} \label{L3 section}

We recall the following theorem.

\begin{theorem}[\cite{MM} Theorem 5.1] \label{MM 5.1}
Let $L_1, \dots,L_4, L$ be general linear forms in $K[x,y,z]$. Let $I = (L_1^t,\dots,L_4^t)$ for $t \geq 3$. 

\begin{itemize}
\item[(i)] If $t \equiv 0 \ (\hbox{mod } 3)$, set $t = 3t_0$. Then for $\delta = 4t_0$ we have $\dim [R/I]_{\delta-3} = \dim [R/I]_\delta$, and $\times L^3$ fails by exactly one to be an isomorphism between these components. In all other degrees, $\times L^3$ has maximal rank.

\item[(ii)] If $t \not \equiv 0 \ (\hbox{mod } 3)$ then $\times L^3$ has maximal rank in all degrees.
\end{itemize}
\end{theorem}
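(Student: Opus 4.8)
The plan is to reduce the question to the two ``ambiguous'' degrees singled out by the weak Lefschetz property and then to evaluate a short list of plane fat-point linear systems. Since $R/I$ has the WLP by Theorem~\ref{SS theorem}, Remark~\ref{narrow down degree} shows that $\times L^3$ automatically has maximal rank whenever $j \le p$ or $j \ge p+3$, so only $j = p+1$ and $j = p+2$ remain. Here $s = 4$ with all exponents equal to $t$, so Corollary~\ref{formula for p - uniform} gives $p = \lfloor 4(t-1)/3 \rfloor$ (with the value $p = 2$ when $t=3$). Writing $t = 3t_0 + r$ with $r \in \{0,1,2\}$, an elementary computation yields
\[
p+2 = \begin{cases} 4t_0 & \text{if } r = 0, \\ 4t_0 + 2 & \text{if } r = 1, \\ 4t_0 + 3 & \text{if } r = 2. \end{cases}
\]
In particular the degree $\delta = 4t_0$ of part~(i) is exactly $p+2$ when $3 \mid t$, which already signals why the residue of $t$ mod~$3$ should govern the answer.

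For each of these two degrees I would read the cokernel of $\times L^3$ off the diagram~\eqref{commutative diagram}: it is $[R/(I,L^3)]_j$, and by Emsalem--Iarrobino (Theorem~\ref{emsalem-iarrobino}) one has $\dim [R/(I,L^3)]_j = \dim \mathcal L_2(j; (j+1-t)^4, j-2)$, while $\dim [R/I]_j = \dim \mathcal L_2(j; (j+1-t)^4)$. Maximal rank holds precisely when the kernel or the cokernel vanishes, and rank--nullity gives $\dim \ker = \dim[R/(I,L^3)]_j + \dim[R/I]_{j-3} - \dim[R/I]_j$; alternatively, injectivity in the lower degree can be certified outright through the dimension formula of Proposition~\ref{magic}. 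Thus everything is reduced to computing a few explicit systems, which I would evaluate by pushing them into standard form with Cremona transformations (Lemma~\ref{lem:Cremona}) together with B\'ezout reductions (Remark~\ref{bezout}); a system in standard form is non-special by~\cite{DL} and hence attains its expected dimension. Carrying this out degree by degree, one finds that in each of the degrees $p+1, p+2$ exactly one of the kernel and cokernel vanishes --- so maximal rank holds --- with a single exception.

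The exception occurs precisely when $3 \mid t$, at $j = p+2 = 4t_0$, where the cokernel system is $\mathcal L_2(4t_0; 4t_0 - 2, (t_0+1)^4)$; here $P$ denotes the point dual to $L$, carrying multiplicity $j-2 = 4t_0-2$, and $P_1,\dots,P_4$ the points dual to $L_1,\dots,L_4$, each carrying multiplicity $t_0+1$. A B\'ezout count shows the irreducible conic through the five points $P,P_1,\dots,P_4$ is forced to split off twice, reducing the system to $\mathcal L_2(4(t_0-1); 4(t_0-1), (t_0-1)^4)$ (for $t_0 = 1$ the original system is literally the Alexander--Hirschowitz system $\mathcal L_2(4;2^5)$ of Theorem~\ref{thm:AH}). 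A curve of degree $4(t_0-1)$ with a point of multiplicity $4(t_0-1)$ at $P$ is a union of $4(t_0-1)$ lines through $P$, and imposing multiplicity $t_0-1$ at each of the four general points $P_i$ forces it to equal $\bigl(\overline{PP_1}\,\overline{PP_2}\,\overline{PP_3}\,\overline{PP_4}\bigr)^{t_0-1}$; hence the system has dimension exactly $1$. One then checks, again by reduction to standard form, that $\dim[R/I]_{\delta-3} = \dim[R/I]_\delta$, so by rank--nullity the kernel is also one-dimensional. This is the failure ``by exactly one'' of part~(i), while the vanishing in all remaining degrees gives part~(ii).

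The main obstacle is the uniform bookkeeping behind the previous two paragraphs: organizing the Cremona and B\'ezout reductions so as to establish non-speciality simultaneously across all the generic degrees and all three residue classes --- a computation that is genuinely error-prone, since a single dropped base point changes the count --- and certifying that the exceptional reduction lands on the $(4,5)$ Alexander--Hirschowitz configuration exactly when $3 \mid t$ and at no other degree.
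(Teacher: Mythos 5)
Your proposal is correct and follows essentially the same route as the paper. Strictly speaking, the paper never proves Theorem~\ref{MM 5.1} itself --- it is quoted from \cite{MM} --- but it reproves and generalizes it (Theorems~\ref{xL3} and \ref{xL3 general}, specialized in Corollary~\ref{cor:L3 and uniform power}), and that proof has exactly your structure: reduction to the two degrees $p+1$, $p+2$ via the WLP (Remark~\ref{narrow down degree}), translation of kernels and cokernels into plane fat-point systems via Theorem~\ref{emsalem-iarrobino}, injectivity certified by Proposition~\ref{magic}, and evaluation of the remaining systems by B\'ezout and Cremona reductions, with Theorem~\ref{thm:AH} accounting for the unique exception. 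Where you genuinely differ is the treatment of the exceptional degree $j = 4t_0$: the paper splits off lines through the point of highest multiplicity to reach $\mathcal L_2(4;2^5)$ and then applies a Cremona transformation landing on the Alexander--Hirschowitz exception $\mathcal L_2(2;2^2)$, while you split off the conic through the five points twice and conclude with a cone argument exhibiting the unique curve $C^2\bigl(\overline{PP_1}\,\overline{PP_2}\,\overline{PP_3}\,\overline{PP_4}\bigr)^{t_0-1}$. Your variant is more geometric and self-contained for this special case, and the same conic-splitting simultaneously disposes of the residues $3 \nmid t$ at degree $p+2$, since there the residual cone has too low a degree and the system is empty; the paper's variant is the one that scales to arbitrarily many generators and mixed exponents. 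Two caveats: your conic-splitting step requires a B\'ezout statement stronger than Remark~\ref{bezout}, which is stated only for lines, so it must be justified separately (it is standard); and the deferred ``degree by degree'' bookkeeping at $j=p+1$, and at $j=p+2$ when $3 \nmid t$, is real work, but it goes through exactly as in the paper's Case 1/Case 2 analysis, so nothing in your plan would fail there.
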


\noindent We note that for the integer $p$ defined in Remark \ref{narrow down degree},  Corollary \ref{formula for p - uniform} gives for Theorem \ref{MM 5.1} that in case (i) we have $\delta = p+2$. Our two main results in this section extend this result in two ways: by increasing the number of linear forms and by allowing mixed powers. We begin with the latter. We continue to use the invariant $p$ defined in Remark \ref{narrow down degree} and discussed in Proposition \ref{formula for p}. We have seen that to show that $\times L^3 : [R/I]_{j-3} \rightarrow [R/I]_j$ has maximal rank for all $j$, it is enough to show it for $j=p+1$ and $j=p+2$. First we show that the former is always true.

\begin{theorem} \label{xL3} 
Let $R = K[x,y,z]$ and let $I = (L_1^{a_1}, \dots, L_s^{a_s})$, where the $L_i$ are general linear forms  and  $s \geq 3$. Set 
 $p = \max \left  \{ j \ | \ \sum_{a_i \leq j} (j+1-a_i) \leq j  \right \}$. Let $L$ be a general linear form. 
Then the homomorphism $\times L^3 : [R/I]_{p-2} \rightarrow [R/I]_{p+1}$ has maximal rank. 
\end{theorem}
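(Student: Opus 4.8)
The plan is to follow the template of the proof of Theorem \ref{xL2}, replacing $k=2$ by $k=3$ and working in the single degree $j=p+1$, and to control the resulting linear system by Cremona transformations rather than by Bézout alone (as anticipated in Remark \ref{preserve}). First I would dispose of the degenerate cases. If $p \le 1$ then $[R/I]_{p-2}=0$ and the map is trivially injective, so I may assume $p \ge 2$, which already gives $j = p+1 \ge 3 = k$. By Remark \ref{narrow down degree} I may discard all generators with $a_i > p+1$, so I assume $a_i \le p+1$ for every $i$; then $a_s \le j$, and the hypothesis $j \ge \max\{k,a_s\}$ of Proposition \ref{magic} is satisfied. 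Finally, if some $a_i = 1$, then modding out by that linear form exhibits $R/I$ as an Artinian quotient of $K[y,z]$, which has the SLP by \cite{HMNW}, so $\times L^3$ automatically has maximal rank; hence I may also assume $a_1 \ge 2$, so that Proposition \ref{magic} genuinely applies.

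Next I would reduce the whole question to a single dimension count. By Proposition \ref{magic} with $k=3$ and $j=p+1$, it suffices to show that $\dim[R/(I,L^3)]_{p+1}$ equals the expected value appearing there (which forces injectivity of $\times L^3$), unless that value would be negative, in which case $[B]_{p+1}=0$ and the map is surjective. By Theorem \ref{emsalem-iarrobino},
\[
\dim [R/(I,L^3)]_{p+1} = \dim \mathcal{L}_2\bigl(p+1;\, p+2-a_1,\dots,p+2-a_s,\, p-1\bigr),
\]
where the final entry $p-1 = (p+1)-3+1$ is the multiplicity at the general point dual to $L$, and every other multiplicity $p+2-a_i$ is positive because $a_i \le p+1$. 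Using Lemma \ref{lem:rewrite} together with \eqref{eq:def p}, the sum of the first $s$ multiplicities equals $n_0+\cdots+n_{p+1}\ge p+2$, so the total weight of the system is large and it is generally not in standard form.

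I would then bring this system to standard form by repeatedly applying the Cremona transformation of Lemma \ref{lem:Cremona} to its three largest multiplicities (one of which is typically the special multiplicity $p-1$). At each step either some transformed multiplicity becomes negative, which forces the system to be empty and hence $[B]_{p+1}=0$ (the surjective branch), or the degree inequality $j \ge b_1+b_2+b_3$ is eventually reached. Once the system is in standard form it is non-special in $\PP^2$ by the cited result of \cite{DL}, so it attains its expected dimension; since Remark \ref{preserve} guarantees that imposing independent conditions is preserved under Cremona, this transports back to show that $\mathcal{L}_2(p+1;\dots)$ has its expected dimension, which I would verify coincides with the formula of Proposition \ref{magic} via the binomial identities of Remark \ref{binom}. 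This yields $[A]_{p+1}=0$ and injectivity.

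The main obstacle I anticipate is the Cremona bookkeeping: one must check that the hypotheses $b_i+m \ge 0$ of Lemma \ref{lem:Cremona} hold at each step so the transformation is legitimate, pin down exactly when a multiplicity goes negative (the surjective branch) versus when standard form is reached (the injective branch), and confirm that in the injective branch the non-special dimension matches Proposition \ref{magic} on the nose. Organizing this cleanly in terms of the quantities $n_j$ of Notation \ref{n_j notation}, so that the reason why $j=p+1$ always gives maximal rank (in contrast to the possible failure at $j=p+2$) becomes transparent, is where the real work lies.
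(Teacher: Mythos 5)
Your overall frame matches the paper's: reduce to the single degree $j=p+1$, invoke Proposition \ref{magic} with $k=3$, compute $\dim [R/(I,L^3)]_{p+1}$ via Theorem \ref{emsalem-iarrobino}, and show it equals the formula in Proposition \ref{magic} (injective branch) or vanishes (surjective branch). But the core of your plan --- Cremona the system $\mathcal L_2\bigl(p+1;\, p-1,\, p+2-a_i \mid a_i \le p+1\bigr)$ into standard form, conclude non-speciality from \cite{DL}, and then check that the \emph{expected} dimension coincides with the formula of Proposition \ref{magic} --- has a genuine gap: those two numbers differ whenever some $a_i \le p-2$. A generator of degree $a_i$ contributes $\binom{p+3-a_i}{2}$ to the expected-dimension count of the fat-point system but only $3(p+1-a_i)$ to the right-hand side of Proposition \ref{magic}, and the discrepancy $\binom{p-a_i}{2}$ is strictly positive for $a_i \le p-2$. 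In that range the system is genuinely \emph{special}: the lines joining the point dual to $L$ (multiplicity $p-1$) to the points dual to low-degree generators (multiplicity $p+2-a_i \ge 4$) are fixed components. A special system can never be brought to standard form by legitimate applications of Lemma \ref{lem:Cremona} (standard form would force non-speciality), so your algorithm must break down; and your fallback claim that ``a transformed multiplicity becomes negative, which forces the system to be empty'' is false --- negative multiplicities impose no conditions and merely signal fixed components. For instance $\mathcal L_2(4;3,3,1)$ has Cremona image with negative entries, yet it has dimension $3$. Concretely, take $I=(L_1^2,L_2^{10},L_3^{10},L_4^{10})$: then $p=9$, the system is $\mathcal L_2(10;8,9,1,1,1)$, its virtual dimension is $-18$, but its actual dimension is $3$. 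Your argument would conclude $[R/(I,L^3)]_{10}=0$, i.e.\ surjectivity of $\times L^3: [R/I]_7 \to [R/I]_{10}$; in fact this map goes from a space of dimension $15$ to one of dimension $18$ and is injective with $3$-dimensional cokernel, so the intermediate conclusion is simply wrong even though maximal rank happens to hold.

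What the paper does, and what your plan is missing, is a B\'ezout step \emph{before} any Cremona transformation: using Remark \ref{bezout}, it splits off the fixed lines through the $L$-point one at a time, reducing every multiplicity $p+2-a_i$ with $a_i\le p$ down to exactly $2$ (the degree and the $L$-multiplicity drop accordingly), and then removes the $n$ general simple points. This yields the residual system $\mathcal L_2(d+2;\,d,\,2^m)$ with $d = p-1-\sum_{a_i\le p}(p-a_i)$, whose expected dimension \emph{does} agree with Proposition \ref{magic}; the inequality $d\ge m-1$, which comes from the definition of $p$, is what keeps this residual step legitimate. Only this cleaned-up system is then attacked with Cremona transformations (two short cases, $d\ge m$ and $d=m-1$), and one checks that the Alexander--Hirschowitz exceptions of Theorem \ref{thm:AH} cannot occur in degree $p+1$ --- which is precisely why maximal rank never fails here, in contrast to degree $p+2$. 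In short, the speciality of the original system is exactly accounted for by the split-off lines, and that accounting is the heart of the proof; no sequence of Cremona moves applied directly to the original system can replace it.
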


\begin{proof}

Let $m = \# \{ a_i \leq p \}$ and $n = \# \{ a_i = p+1 \}$ and put $A = R/I$. We may assume $p \ge 3$. 

Again we make a formal calculation followed by comments of justification. By Theorem~\ref{emsalem-iarrobino} we have
\begin{equation*}
\begin{split}
\dim [A/L^3 A]_{p+1}
& =  \dim [R/(L_1^{a_1},\dots, L_s^{a_s},L^3)]_{p+1} 
\\ 
& =  \dim \mathcal L_2 (p+1;  p-1 ,  p+2-a_i \; | \; a_i \le p+1) 
\\ 
& = \dim \mathcal L_2 (p+1 ;  p-1, 1^n,   p+2-a_i \; | \; 1 \le  i \le m) 
\\
& =   \dim \mathcal L_2 (p+1 -\sum_{i=1}^m (p-a_i); p -1 - \sum_{i=1}^m (p-a_i), 2^m, 1^n ) 
\\ 
& = \dim \mathcal L_2 (d+2; d, 2^m) - n, 
\end{split}
\end{equation*} 
where $d = p -1 - \sum_{i=1}^m (p-a_i)$. 

We now comment on key steps in the above formal computation.

\begin{itemize}
\item[Line 3:] This uses the definition of the numbers $m$ and $n$. We  note that it is possible that $m=0$ and all relevant generators have degree $p+1$. In this case line 3 is 
\[
\dim \mathcal L_2(p+1; p-1,1^n) = \binom{p+3}{2} - \binom{p}{2} - n = 3p+3-n,
\]
and one easily checks that this is equal to the right-hand side of the condition in Proposition \ref{magic}. Hence $\times L^3 : [R/I]_{p-2} \rightarrow [R/I]_{p+1}$ is injective. We thus can assume from now on that $m \geq 1$.

\smallskip

\item[Line 4:] The goal is to reduce all of the multiplicities to either 1 or 2 except for the first one, using Bezout's theorem (Remark \ref{bezout}), splitting off lines joining the point with (original) multiplicity $p-1$ to each of the other points, one at a time. Since the first multiplicity is always two less than the degree of the curves, this can be done as long as  at least one of the $m$ multiplicities corresponding to some $a_i \leq p$ is greater than 2. Notice that $d \ge 0$. Indeed, the definition of $p$ gives $p \ge \sum_{a_i \leq p} (p+1-a_i)$. It follows that 
\begin{equation} 
   \label{eq:sum estimate}
\sum_{i=1}^m (p-a_i) \le p - m,  
\end{equation} 
which implies $d \geq m-1 \geq 0$ as claimed.

\smallskip

\item[Line 5:] Since all the linear forms are general, the dual points are general. But $n$ general simple points impose $n$ independent conditions on any non-empty linear system. If we reach a point where the linear system is empty, again the dimension is zero and we are done. So again, we can assume that line 5 is valid.
\end{itemize} 

We now rewrite the system $\mathcal L_2 (d+2; d, 2^m)$ using Cremona transformations in order to show that it has the expected dimension. 
\smallskip 

\emph{Case 1}: Suppose $d \ge m$. Write $m= 2 k + \delta$ with integers $k \geq 0$ and 
$\delta \in \{0, 1\}$. Using $k$ Cremona transformations we get 
\[
\dim \mathcal L_2 (d+2; d,  2^{m}) = \dim \mathcal L_2 (d+2 - 2k; d - 2k,  2^{\delta}). 
\]
The linear system on the right-hand side has the expected dimension, and thus so does $\mathcal L_2 (d+2; d,  2^{m})$ by Remark \ref{preserve}. 
\smallskip 

\emph{Case 2}: Since $d \ge m-1$ by Inequality \eqref{eq:sum estimate}, we are left with the case where $d = m-1$. Writing $d = 2 k + \delta$ with integers $k$ and 
$\delta \in \{0, 1\}$ and using $k$ Cremona transformations we get 
\[
\dim \mathcal L_2 (d+2; d,  2^{m}) = \dim \mathcal L_2 (\delta+2; \delta,  2^{1 +\delta}). 
\]
Hence both systems have  expected dimension. 
\smallskip 

The above discussion shows that 
\begin{equation*}
\begin{split}
\dim [A/L^3 A]_{p+1}
& =   3 d + 6 - 3 m  - n  = 3 \big [ p+1 - \sum_{i=1}^m (p-a_i) - m \big ] - n . 
\end{split}
\end{equation*} 
Comparing with Proposition \ref{magic}, we conclude that $\times L^3 : [A]_{p-2} \rightarrow [A]_{p+1}$ is injective. \end{proof}

As noted in Theorem \ref{MM 5.1}, it is not always true that $\times L^3$ has maximal rank, although Theorem \ref{xL3} shows that it can only fail for the map $\times L^3 : [R/I]_{p-1} \rightarrow [R/I]_{p+2}$. From now on we use Notation \ref{n_j notation} and Lemma \ref{lem:rewrite}.

\begin{theorem} \label{xL3 general}
Let $R = K[x,y,z]$ and let $I = (L_1^{a_1}, \dots, L_s^{a_s})$, where the $L_i$ are general linear forms,  and 
 $s$ and the $a_i$\!'s are non-negative integers. Set 
 \[
 p = \max \left  \{ j \ | \ \sum_{a_i \leq j} (j+1-a_i) \leq j  \right \} = \max \left \{ j \ | \  n_0 + n_1 + \dots + n_j  \leq j \right \}
 \] 
Let $L$ be a general linear form. Then the homomorphism 
$\times L^3 : [R/I]_{j-3} \rightarrow [R/I]_j$ fails to have maximal rank in all degrees $j$ if and only if 
\[
n_{p+1} =   p + 2 - \sum_{a_i \le p} (p+1 - a_i) \ge 4,  
\]
this number is even, and none of the $a_i$ equals $p+2$. Moreover, in this case multiplication fails maximal rank in exactly one degree, namely when $j = p+2$. Here we have $\dim [R/I]_{p-1} = \dim [R/I]_{p+2}$,  but the cokernel has dimension 1.

\end{theorem}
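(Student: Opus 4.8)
The plan is to reduce everything to the single map $\times L^3 \colon [R/I]_{p-1}\to[R/I]_{p+2}$. Indeed, Remark~\ref{narrow down degree} shows that $\times L^3$ has maximal rank for all $j\le p$ and all $j\ge p+3$, while Theorem~\ref{xL3} settles $j=p+1$, so only $j=p+2$ remains. Writing $c=\dim[R/(I,L^3)]_{p+2}$ for the cokernel (the cokernel identification is the bottom row of the diagram in Proposition~\ref{magic}), rank--nullity gives $\dim\ker = c-\big(\dim[R/I]_{p+2}-\dim[R/I]_{p-1}\big)$, so the map fails maximal rank exactly when $c>\max\{0,\ \dim[R/I]_{p+2}-\dim[R/I]_{p-1}\}$. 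As in Theorems~\ref{xL2} and~\ref{xL3}, I may assume every $a_i\le p+2$: a generator with $a_i\ge p+3$ changes none of $[R/I]_{p-1}$, $[R/I]_{p+2}$, $[R/(I,L^3)]_{p+2}$, and, altering $n_j$ only for $j\ge p+3$, it leaves $p$ unchanged.

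First I would compute $c$. By Theorem~\ref{emsalem-iarrobino}, $c=\dim\mathcal L_2\big(p+2;\ p,\ (p+3-a_i)_{a_i\le p+2}\big)$, the multiplicity $p$ arising from $L^3$. Here $a_i\le p$ contribute multiplicity $\ge 3$, $a_i=p+1$ contribute $2$, and $a_i=p+2$ contribute $1$; write $v=\#\{a_i=p+2\}$. Using Bézout (Remark~\ref{bezout}) I would split off, one at a time, the $\sum_{a_i\le p}(p+1-a_i)=n_0+\cdots+n_p$ lines joining the multiplicity-$p$ point to each point of multiplicity $\ge 3$, bringing all of the latter down to $2$. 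This reduces the system to $\mathcal L_2\big(d';\ d'-2,\ 2^{\,n_{p+1}},\ 1^{v}\big)$ with $d'=p+2-(n_0+\cdots+n_p)$. Since $n_0+\cdots+n_{p+1}\ge p+2$ one has $d'\le n_{p+1}$, with equality precisely when the displayed equation of the statement holds. Iterating Cremona transformations (Lemma~\ref{lem:Cremona} and Remark~\ref{preserve}) lowers $d'$ and the number of double points by $2$ at each step. When $d'=n_{p+1}$, $v=0$ and $n_{p+1}$ is even, this terminates at $\mathcal L_2(2;2^2)$, of dimension $1$ by Theorem~\ref{thm:AH}; in all remaining cases (some $a_i=p+2$, or the displayed equation failing, or $n_{p+1}$ odd) it terminates at a non-special system of dimension $0$. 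Hence $c=1$ exactly when the displayed equation holds, $v=0$ and $n_{p+1}$ is even, and $c=0$ otherwise.

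When $c=0$ the map is surjective and there is nothing to prove, so the remaining issue is the case $c=1$, where $\times L^3$ fails maximal rank iff $\dim[R/I]_{p-1}=\dim[R/I]_{p+2}$ (kernel $=1$) rather than $\dim[R/I]_{p-1}=\dim[R/I]_{p+2}-1$ (kernel $=0$). I would compute $\dim[R/I]_{p-1}$ by telescoping the first differences of the Hilbert function below the peak, which by the WLP equal $\dim[R/(I,L)]_j = j+1-(n_0+\cdots+n_j)$, and compute $\dim[R/I]_{p+2}$ from Theorem~\ref{emsalem-iarrobino} via a Cremona reduction of $\mathcal L_2\big(p+2;\ (p+3-a_i)_{a_i\le p+1}\big)$. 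A binomial bookkeeping resting on Lemma~\ref{lem:rewrite} shows that, once the displayed equation holds and $v=0$, the two \emph{expected} dimensions already coincide; thus $\dim[R/I]_{p+2}-\dim[R/I]_{p-1}$ equals the speciality of this last linear system.

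The crux, and the origin of the threshold $n_{p+1}\ge 4$, is exactly this final speciality. The hard part will be to show that $\mathcal L_2\big(p+2;(p+3-a_i)_{a_i\le p+1}\big)$ is non-special when $n_{p+1}\ge 4$ but special by exactly one when $n_{p+1}=2$: in the latter case there are only two fat points and the line through them splits off by Bézout, raising the dimension by one. Granting this, for $n_{p+1}\ge 4$ the kernel and the cokernel are both one-dimensional, so $\times L^3$ fails maximal rank by exactly one, only in degree $p+2$, and there $\dim[R/I]_{p-1}=\dim[R/I]_{p+2}$; whereas for $n_{p+1}=2$ the map is injective and maximal rank holds. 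Combining the cokernel dichotomy of the second paragraph with this kernel analysis yields precisely the asserted equivalence.
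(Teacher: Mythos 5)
Your plan follows the paper's own proof very closely in all the parts you actually carry out: the reduction to the single map $\times L^3\colon [R/I]_{p-1}\to[R/I]_{p+2}$ via Remark~\ref{narrow down degree} and Theorem~\ref{xL3}, the computation of the cokernel through Theorem~\ref{emsalem-iarrobino}, B\'ezout splittings and Cremona transformations, and the reformulation of the kernel question as a speciality question for $\mathcal L_2\big(p+2;(p+3-a_i)_{a_i\le p+1}\big)$ are exactly the paper's steps. Two of your variations are sound and even streamline things: using the inequality $d'\le n_{p+1}$ (forced by the definition of $p$) to see that the Cremona iteration always lands in the range covered by Theorem~\ref{thm:AH} collapses the paper's case distinction for the cokernel (its case $d>m+n$ is in fact vacuous), and your B\'ezout argument for $n_{p+1}=2$ (the system through two fat points is special by exactly one, so the map is injective) is a correct alternative to the paper's regular-sequence argument for two generators.

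However, there is a genuine gap, and it sits precisely at the step you label ``the hard part'' and then take for granted: the claim that $\mathcal L_2\big(p+2;(p+3-a_i)_{a_i\le p+1}\big)$ is non-special whenever the displayed condition holds, $v=0$ and $n_{p+1}\ge 4$. The entire forward implication of the theorem rests on this: without it you cannot conclude $\dim[R/I]_{p+2}=\dim[R/I]_{p-1}$, so the kernel could still be zero and maximal rank could hold. The claim is not routine. Speciality of closely related systems is exactly what produced the cokernel excess $c=1$, so generality of the points cannot simply be invoked; and the system need not be in standard form (with $m=\#\{a_i\le p\}$ and $n=\#\{a_i=p+1\}$, standard form already fails for $m=3,\,n=1$, for $m=n=2$, and for $m=0,\,n=4$), so the non-speciality criterion of \cite{DL} does not apply directly. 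The paper devotes the final page of its proof to exactly this point: it verifies standard form when $m\ge3$ and $n\ge3$, and then treats $m=3,\,n=1$; $m=2$ (subcases $n\ge4$ and $n=2$); $m=1$ (subcases $n\ge5$ and $n=3$); and $m=0$ (where $n=p+2\ge4$ and Theorem~\ref{thm:AH} are needed), each non-standard case requiring its own Cremona transformation together with Remark~\ref{preserve}; it is here that the hypotheses $n_{p+1}\ge4$ and the parity condition are actually consumed. Supplying this case analysis is what is needed to turn your outline into a proof.
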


\begin{proof}
We may assume that each $a_i \ge 2$ because otherwise $A = R/I$ has the strong Lefschetz property by \cite{HMNW}. If $s \leq 2$ then the maps are injective, and if $s=3$ then after a change of variables we can assume that $I$ is a monomial  complete intersection, so the result of \cite{stanley} and of \cite{watanabe} applies, and $R/I$ has the strong Lefschetz property. Thus we can assume that $s \ge 4$. By Proposition \ref{formula for p}, we know that $p = \reg (A/LA)$. Hence, Remark \ref{narrow down degree} shows that the multiplication map $\times L^3 : [A]_{j-3} \rightarrow [A]_j$ has maximal rank if $j \notin \{p+1, p+2\}$. By Theorem \ref{xL3}, the maximal rank property is also true if $j = p+1$. Hence it remains to consider $\times L^3 : [A]_{p-1} \rightarrow [A]_{p+2}$, where $p \ge 2$.

Let $m = n_p = \# \{ a_i \leq p \}$,  $n = \# \{ a_i = p+1 \}$, and $q =  \# \{ a_i = p+2 \}$.
Again we make a formal calculation followed by comments of justification. By Theorem~\ref{emsalem-iarrobino} we have
\begin{equation*}
\begin{split}
\dim [A/L^3 A]_{p+2}
& =  \dim [R/(L_1^{a_1},\dots, L_s^{a_s},L^3)]_{p+2} 
\\ 
& =  \dim \mathcal L_2 (p+2; p,  p+3-a_i \; | \; a_i \le p+2) 
\\ 
& =  \dim \mathcal L_2 (p+2; p, 2^n, 1^q,   p+3-a_i \; | \; 1 \le i \le m) 
\\ 
& =   \dim \mathcal L_2 (p+2 -\sum_{i=1}^m (p+1-a_i); p - \sum_{i=1}^m (p+1-a_i),  2^{m+n}, 1^q )
 \\ 
& =  \max \{0,\   \dim \mathcal L_2 (d+2; d,  2^{m+n}) - q\},  
\end{split}
\end{equation*}
where $d = p - \sum_{i=1}^m (p+1-a_i)$. 

We now comment on the steps in this formal computation.

\begin{itemize}
\item[Line 3:] This uses the definition of the numbers $m, n$, and $q$.  We note that it is possible that  $m=0$, i.e. that the smallest generator has degree $p+1$. In this case line 3 indicates that we have to compute $\dim \mathcal L_2(p+2; p, 2^n, 1^q)$. This does not necessarily have the expected dimension; e.g. if $p=2$, $n=4$ and $q=0$, we know that this dimension is 1 rather than 0, and this is an exception allowed in the statement of the theorem. But the argument below goes through also in this situation.

\smallskip

\item[Line 4:] The goal is to reduce all of the multiplicities, other than the first one, that are at least 3 to 2.  This can be done by  using Bezout's theorem (Remark \ref{bezout}), splitting off lines joining the point with (original) multiplicity $p$ to each of the other points, one at a time. Since the first multiplicity is always two less than the degree of the curves, this can be done as long as  at least one of the $m$ multiplicities corresponding to some $a_i +2 - p$ is at least 3.  (Again, if $m=0$ there is nothing to do.) 
Notice that $d = p - \sum_{i=1}^m (p+1-a_i)$ is not negative by the definition of $p$. 

\smallskip

\item[Line 5:] As before, this uses that the $q$ points with multiplicity one are general. 
\end{itemize}

Our next goal is to determine when the linear system $\mathcal L_2 (d+2; d,  2^{m+n})$ has the 
expected dimension by using Cremona transformations. This is clearly true if $d \leq 1$, and not hard to check if $m+n \leq 1$, so we can assume without loss of generality that $d \ge 2$ and $m+n \ge 2$. Then such a 
transformation takes $\mathcal L_2 (d+2; d,  2^{m+n})$ to $\mathcal L_2 (d; d-2,  2^{m+n-2})$. We 
consider two cases. 
\smallskip 

\emph{Case 1}: Suppose $d > m+n$. Write $m + n = 2 k + \delta$ with integers $k$ and 
$\delta \in \{0, 1\}$. Using $k$ Cremona transformations we get 
\[
\dim \mathcal L_2 (d+2; d,  2^{m+n}) = \dim \mathcal L_2 (d+2 - 2k; d - 2k,  2^{\delta}). 
\]
The linear system on the right-hand side has the expected dimension, and thus so does $\mathcal L_2 (d+2; d,  2^{m+n})$ by Remark \ref{preserve}. 
\smallskip 

\emph{Case 2}: Suppose $d \le m+n$. This time write  $d = 2 k + \delta$ with integers $k$ and 
$\delta \in \{0, 1\}$. Again using  $k$ Cremona transformations we get 
\[
\dim \mathcal L_2 (d+2; d,  2^{m+n}) = \dim \mathcal  L_2 (\delta+2; \delta,  2^{m+n-d +\delta}). 
\]
Theorem \ref{thm:AH} shows that the linear system on the right-hand side has the expected dimension, unless $\delta = 0$ and $m+n-d = 2$, which is equivalent to the condition 
\begin{equation}
    \label{eq:exceptionalCase} 
    m+n = p+2 - \sum_{i=1}^m (p+1-a_i) \quad \text{is even}. 
\end{equation} 
In the exceptional case we have $\dim \mathcal L_2 (d+2; d,  2^{m+n}) = 1$. This concludes Case 2. 
\smallskip

The above discussion of cases shows that if Condition \eqref{eq:exceptionalCase} is not satisfied, then $\mathcal L_2 (d+2; d,  2^{m+n})$ has the expected dimension. This gives 
\begin{equation*}
\begin{split}
\dim [A/L^3 A]_{p+2}
& =   \max \{0,\ 3 d + 6 - 3 (m+n) - q\} 
\\
& =  \max \{0,\ 3 \big [ p+2 - \sum_{i=1}^m (p+1-a_i) - m - n \big ] - q\}. 
\end{split}
\end{equation*} 
If the right-hand side is zero, then $\times L^3 : [A]_{p-1} \rightarrow [A]_{p+2}$ is surjective, and we are done. Otherwise, we get
\begin{equation*}
\begin{split}
\dim [A/L^3 A]_{p+2}
& =   3 \big [ p+2 - \sum_{i=1}^m (p+1-a_i) - m - n \big ] - q\} 
\\
& =  3 \left [ p+2 - \sum_{a_i \le p} (p+2-a_i) - \sum_{a_i = p+1} (p+2-a_i) \right ] - \sum_{a_i = p+2} \binom{p+2-a_i+2}{2}. 
\end{split}
\end{equation*} 
Comparing with Proposition \ref{magic}, we conclude that $\times L^3 : [A]_{p-1} \rightarrow [A]_{p+2}$ is injective. 

It remains to consider the case where Condition \eqref{eq:exceptionalCase} is satisfied. Since in this case $\dim \mathcal L_2(d+2; d,2^{m+n}) = 1$, we have 
\[
\dim [A/L^3 A]_{p+2} =  \max \{0,\ 1 - q\}.  
\]
Thus, if $q \ge 1$ the map $\times L^3 : [A]_{p-1} \rightarrow [A]_{p+2}$ is surjective, as desired. 

Finally, suppose $q = 0$ in addition to Condition \eqref{eq:exceptionalCase}. Then $\dim [A/L^3 A]_{p+2} = 1$. If $m + n = 2$, then $I$ has exactly two generators with a degree that is at most $p+2$. By generality they form a regular sequence, so  $\times L^3 : [A]_{p-1} \rightarrow [A]_{p+2}$ has maximal rank. Thus, for the remainder of the proof we may also assume that $m + n \ge 4$. Notice that $ \# \{ a_i \leq p +1 \} = m + n$. Therefore the argument is complete, once we have shown that $[A]_{p+2}$ and $[A]_{p-1}$ have the same dimension. 

In order to check this,  consider first  $[A]_{p-1}$.  Theorem~\ref{emsalem-iarrobino} gives 
\[
\dim [A]_{p-1} = \dim  \mathcal L_2 (p-1;   p-a_i \; | \; 1 \le i \le m). 
\]
We have seen above that $p-1 \ge \sum_{i=1}^m (p-a_i)$. Thus, the linear system on the right-hand side is in standard form and has the expected dimension. It follows that 
\[
\dim [A]_{p-1} = \binom{p+1}{2} - \sum_{i=1}^m \binom{p+1 -a_i}{2}. 
\]

We now consider $[A]_{p+2}$. Using $q = 0$,  Theorem~\ref{emsalem-iarrobino} gives 
\[
\dim [A]_{p+2} = \dim  \mathcal L_2 (p+2; 2^n,   p+3-a_i \; | \; 1 \le i \le m). 
\]
We claim that the system on the right-hand side has the expected dimension. Since $p+2 =  \sum_{i=1}^m (p+2-a_i)   + n$ by  Condition \eqref{eq:exceptionalCase}, this is certainly true if $m \ge 3$ and $n \ge 3$ because then 
\[
p+2 = \sum_{i=1}^m (p+2-a_i) + n \geq \sum_{i=1}^3 (p+2-a_i) + 3 = \sum_{i=1}^3 (p+3-a_i)
\]
and the system is in standard form. 

If $m = 3$ then it remains to consider the case where  $n =1$. Now Condition \eqref{eq:exceptionalCase} shows that we can apply a Cremona transformation to obtain 
\[
\dim  \mathcal  L_2 (p+2; 2,   p+3-a_i \; | \; 1 \le i \le 3) = \dim  \mathcal  L_2 (p; 2,   p+1-a_i \; | \; 1 \le i \le 3). 
\]
The new system is in standard form by the definition of $p$, and thus also $\mathcal L_2 (p+2; 2,   p+3-a_i \; | \; 1 \le i \le 3)$ has the expected dimension. 

Suppose $m =2$. Then Condition \eqref{eq:exceptionalCase} gives $p+2 -   \sum_{i=1}^2 (p+3-a_i) - 2 = n - 4$. Hence, $ \mathcal  L_2 (p+2; 2^n,   p+3-a_i \; | \; 1 \le i \le 2)$ is in standard form if $ n\ge 4$. Otherwise, $n = 2$ and a Cremona transformation gives
\[
\dim  \mathcal  L_2 (p+2; 2^2,   p+3-a_i \; | \; 1 \le i \le 2) = \dim  \mathcal  L_2 (p; 2,   p+1-a_i \; | \; 1 \le i \le 2). 
\]
Again the new system is in standard form, and thus the original system has the expected dimension by Remark \ref{preserve}. 

If $m = 1$, then $n \ge 3$ and the linear system $\mathcal  L_2 (p+2; 2^n,   p+3-a_1)$ is in standard form if  $n \ge 5$. If $n=3$ a Cremona transformation shows that the system has the expected dimension as well. 

If $m = 0$, then Condition \eqref{eq:exceptionalCase} gives $p+2 = n$. Since we must have $n \ge 4$, the system $\mathcal  L_2 (n; 2^n)$ has the expected dimension by Theorem \ref{thm:AH}. 

The above discussion shows that $ \mathcal L_2 (p+2; 2^n,   p+3-a_i \; | \; 1 \le i \le m)$ always has the expected dimension. Thus, we get 
\[
\dim [A]_{p+2} = \binom{p+4}{2} - \sum_{i=1}^m \binom{p+4 -a_i}{2} - 3n. 
\]
Using Condition \eqref{eq:exceptionalCase} again, 
it follows that 
\begin{equation*}
\begin{split}
\dim [A]_{p+2} - \dim [A]_{p-1} & = \binom{p+4}{2} -  \binom{p+1}{2}  - \sum_{i=1}^m \left [ \binom{p+4 -a_i}{2} - \binom{p+1 -a_i}{2} \right] - 3n 
\\
& = 3 p + 6 - \sum_{i=1}^m \big [3 (p-a_i) + 6 \big ] - 3n 
\\
& = 3 \left [  m + n +  \sum_{i=1}^m (p+1-a_i) - \sum_{i=1}^m (p - a_i + 2) - n \right ] 
\\
& = 0, 
\end{split}\end{equation*} 
as desired. Now the argument is complete. 
\end{proof} 

It will be convenient to rewrite Theorem \ref{xL3 general} in the following way (also using Lemma \ref{lem:rewrite}).

\begin{theorem} 
         \label{thm:mult cube}
Let  $I \subset R$ be an ideal generated by powers of  general linear forms such that $A = R/I$ is artinian.   Write $p = p(A)$ and $n_j = n_j (A)$.  
Let $\ell \in R$ be a general linear form. Then there is at least one degree $j$ where the homomorphism 
$\times \ell^3 : [A]_{j-3} \rightarrow [A]_j$ fails to have maximal rank if and only if 
\[
 n_{p+1} = p + 2 - (n_p + n_{p-1} + \cdots + n_0) \ge 4,   
\]
this number is even, and $n_{p+2} = n_{p+1}$. Moreover, in this case multiplication fails maximal rank in exactly one degree, namely if $j = p+2$. Here we have $\dim [A]_{p-1} = \dim [A]_{p+2}$,  but the cokernel has dimension 1.
\end{theorem}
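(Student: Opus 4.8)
The plan is to recognize that Theorem~\ref{thm:mult cube} is simply Theorem~\ref{xL3 general} re-expressed in the $n_j$-notation of Notation~\ref{n_j notation}, so that the whole argument reduces to a short dictionary translating each clause of the failure criterion into the new language. I would therefore not repeat any of the linear-system or Cremona computations; instead I would cite Theorem~\ref{xL3 general} directly and verify clause by clause that its equivalent condition coincides with the one stated here.

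The first step is a formal rewriting of the sum. By Lemma~\ref{lem:rewrite} one has $\sum_{a_i \le p}(p+1-a_i) = n_0 + n_1 + \cdots + n_p$, so the equality $n_{p+1} = p+2 - \sum_{a_i \le p}(p+1-a_i)$ from Theorem~\ref{xL3 general} is literally $n_{p+1} = p+2 - (n_p + n_{p-1} + \cdots + n_0)$; the attached requirements ``$\ge 4$'' and ``this number is even'' are unchanged, and the same substitution shows that the two displayed formulas for $p$ agree.

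The second step handles the clause ``none of the $a_i$ equals $p+2$.'' Since $n_{p+2} = \#\{a_i \le p+2\}$ and $n_{p+1} = \#\{a_i \le p+1\}$, the difference $n_{p+2} - n_{p+1} = \#\{a_i = p+2\}$ counts precisely the generators of degree $p+2$; hence this clause is equivalent to $n_{p+2} = n_{p+1}$. Combining the two steps turns the three-part condition of Theorem~\ref{xL3 general} verbatim into the three-part condition stated here, and the quantitative conclusions---failure in the single degree $j = p+2$, the equality $\dim[A]_{p-1} = \dim[A]_{p+2}$, and the one-dimensional cokernel---transfer with no change.

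There is no real obstacle, only two bookkeeping points I would make sure to address. First, the phrase ``fails to have maximal rank in all degrees $j$'' in Theorem~\ref{xL3 general} must be read as the negation of ``$\times \ell^3$ has maximal rank for every $j$,'' i.e.\ as the existence of at least one bad degree, which is exactly the formulation used here; the ``moreover'' sentence identifying $j = p+2$ confirms the two readings agree. Second, I would note that the equality clause $n_{p+1} = p+2 - (n_0 + \cdots + n_p)$ is the borderline case of the inequality $n_0 + \cdots + n_p + n_{p+1} \ge p+2$ from~\eqref{eq:def p}, so that the parity and size conditions are being imposed on the genuinely correct quantity $n_{p+1} = \#\{a_i \le p+1\}$.
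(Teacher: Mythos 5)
Your proposal is correct and is exactly the paper's approach: the paper introduces this theorem with the single line ``It will be convenient to rewrite Theorem \ref{xL3 general} in the following way (also using Lemma \ref{lem:rewrite})'' and gives no further proof, so your clause-by-clause dictionary (Lemma~\ref{lem:rewrite} for the sum, $n_{p+2}-n_{p+1}=\#\{a_i=p+2\}$ for the last clause, and the reading of ``fails \ldots in all degrees'' as existence of one bad degree) is precisely the intended argument, just made explicit.
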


For comparison with Theorem \ref{MM 5.1} and with \cite{MM} Remark 6.2 we specialize Theorem \ref{xL3 general} to the case of uniform powers.

\begin{corollary}
     \label{cor:L3 and uniform power} 
Let $R = K[x,y,z]$ and let $I = (L_1^{t}, \dots, L_s^{t})$, where the $L_i$ are general linear forms and $t \ge 1$. Let $L$ be a general linear form. Then the homomorphism $\times L^3 : [R/I]_{j-3} \rightarrow [R/I]_j$ fails to have maximal rank in all degrees if and only if $s-1$ divides $t$ and $s \ge 4$ is even.  Moreover, multiplication fails maximal rank in exactly one degree, namely when $j = s \frac{t}{s-1}$. Here we have $\dim [R/I]_{j-3} = \dim [R/I]_{j}$,  but the cokernel has dimension 1.
\end{corollary}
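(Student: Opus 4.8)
The plan is to derive the corollary directly from Theorem~\ref{thm:mult cube} (the $n_j$-form of Theorem~\ref{xL3 general}) by specializing its failure criterion to the uniform case $a_1 = \cdots = a_s = t$. The first step is to record the counting function: since every generator has degree $t$, one has $n_j = 0$ for $j < t$ and $n_j = s$ for $j \ge t$, so that $n_0 + \cdots + n_j = s(j - t + 1)$ for all $j \ge t-1$. By Theorem~\ref{thm:mult cube}, the map $\times L^3$ fails maximal rank in some degree exactly when the defining equation $n_{p+1} = p + 2 - (n_0 + \cdots + n_p)$ holds and, in addition, $n_{p+1} \ge 4$ is even and $n_{p+2} = n_{p+1}$. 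I will feed the values above into each of these conditions.

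The crux is to decide when the defining equation holds, that is, when $n_0 + \cdots + n_{p+1} = p+2$ rather than merely $\ge p+2$. The characterization \eqref{eq:def p} of $p$ forces $n_{p+1} > 0$, hence $p+1 \ge t$; setting $v = p - t + 2 \ge 1$ we then have $n_0 + \cdots + n_{p+1} = sv$ and $p + 2 = t + v$. The two inequalities in \eqref{eq:def p} become $s(v-1) \le t + v - 2$ and $sv \ge t + v$, which rearrange to
\[
t \le v(s-1) \le t + (s-2).
\]
Thus $v(s-1)$ is the unique multiple of $s-1$ in the window of $s-1$ consecutive integers $\{t, t+1, \dots, t+s-2\}$, and this determines $v$ and hence $p$. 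The defining equation $sv = p+2$ says precisely that this multiple equals the left endpoint $t$, i.e. $v(s-1) = t$; equivalently $s-1 \mid t$, in which case $v = t/(s-1)$ and $p + 2 = t + v = st/(s-1)$. I expect this to be the step demanding the most care, because it must run uniformly across the two regimes $s \le t$ and $s \ge t+1$ of Corollary~\ref{formula for p - uniform}; phrasing everything through the window $\{t, \dots, t+s-2\}$ sidesteps the floor formula for $p$ and absorbs the boundary case $p = t-1$ (where $v = 1$) without a separate argument.

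The remaining conditions are then immediate. Since $p + 1 \ge t$, every generator satisfies $a_i = t \le p+1$, so $n_{p+1} = s$; hence ``$n_{p+1} \ge 4$ and even'' becomes ``$s \ge 4$ and even''. The condition $n_{p+2} = n_{p+1}$ is automatic, because $p + 2 = st/(s-1) > t$ implies that no $a_i$ equals $p+2$, so $n_{p+2} = s = n_{p+1}$. Combining, $\times L^3$ fails maximal rank in some degree precisely when $s - 1 \mid t$ and $s \ge 4$ is even; note also that $s-1 \mid t$ with $t \ge 1$ forces $s - 1 \le t$, so such failures occur only for $s \le t+1$, matching the two regimes. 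Finally, the ``Moreover'' statements transfer verbatim from Theorem~\ref{xL3 general}: the unique failing degree is $j = p+2 = s\frac{t}{s-1}$, where $\dim [R/I]_{j-3} = \dim [R/I]_{p-1} = \dim [R/I]_{p+2} = \dim [R/I]_j$ while the cokernel has dimension $1$.
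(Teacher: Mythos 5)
Your proposal is correct, and it rests on the same pillar as the paper's own proof (specializing Theorem~\ref{xL3 general}, equivalently Theorem~\ref{thm:mult cube}, to uniform powers), but the execution is genuinely different. The paper first invokes the explicit floor formula of Corollary~\ref{formula for p - uniform} and then splits into two regimes: for $s \ge t+1$ it has $p = t-1$ and reads off the criterion directly, while for $s \le t$ it manipulates $p = \lfloor s(t-1)/(s-1) \rfloor$, shows the failure equation is equivalent to $p = -2 + st/(s-1)$, and extracts the divisibility $s-1 \mid t$ from integrality of $p$ together with $\gcd(s,s-1)=1$; it also disposes of $s=3$ separately via the Stanley--Watanabe theorem. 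You instead bypass the floor formula entirely: working from the defining inequalities \eqref{eq:def p} with $v = p-t+2$, you pin down $p$ by the condition that $v(s-1)$ is the unique multiple of $s-1$ in the window $\{t, t+1, \dots, t+s-2\}$ of $s-1$ consecutive integers, and the failure equation says exactly that this multiple is the left endpoint $t$, i.e.\ $s-1 \mid t$. What your route buys is uniformity: no case split between $s \le t$ and $s \ge t+1$, no floor arithmetic, and the case $s=3$ is absorbed automatically since $n_{p+1} = s < 4$ rules out failure without appealing to the SLP of monomial complete intersections. What the paper's route buys is an explicit closed form for $p$ in all cases (useful elsewhere, e.g.\ in Section~\ref{sec:appl}), whereas your argument only identifies $p$ implicitly except in the failure case. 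Both correctly verify the remaining conditions ($n_{p+1} = s$, so parity and size become conditions on $s$; $n_{p+2} = n_{p+1}$ automatic since $p+2 > t$) and both land on the failing degree $j = p+2 = st/(s-1)$ with the dimension statements imported verbatim from the theorem.
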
 

\begin{proof}
We can assume that $s\geq 4$ throughout since otherwise $I$ can be assumed to be a monomial complete intersection, so the theorem of \cite{stanley} and of \cite{watanabe} gives that $R/I$ has the SLP.  We know from Corollary \ref{formula for p - uniform} that
\[
p =  \begin{cases} 
\left \lfloor \frac{s(t-1)}{s-1} \right \rfloor & \text{ if } s \le t \\
t - 1 & \text{ if } s \ge t+1. 
\end{cases}
\]
We consider the latter possibility first. Since $p = t-1$, we get from Theorem \ref{xL3 general} that $\times L^3$ fails to have maximal rank in all degrees if and only if $s = t+1$ is even; since $s \geq t+1$, this is equivalent to $s-1$ divides $t$ and $s$ is even. In this case multiplication fails to have maximal rank exactly when $j = p+2 = t+1 = s$.

We now assume that $s \leq t$. Then
\[
p = \left \lfloor \frac{s(t-1)}{s-1} \right \rfloor = \left \lfloor t + \frac{t-s}{s-1} \right \rfloor.
\]
In particular, $p \geq t$ so $t \leq p \leq p+1$ and we obtain from Theorem \ref{xL3 general} that $\times L^3$ fails to have maximal rank in all degrees $j$ if and only if
\[
s = p+2 - s(p+1-t) \geq 4
\]
is even. The above equality is equivalent to
\[
p = -2 + \frac{st}{s-1}.
\]
If $\times L^3$ fails to have maximal rank in all degrees then $s-1$ divides $t$, since $p$ is an integer and $s-1$ has no nontrivial factors in common with $s$. 

Conversely, assume that $s-1$ divides $t$ and $s$ is even. Then from the above formula for $p$ we get
\[
p = \left \lfloor \frac{st-s}{s-1} \right \rfloor = \left \lfloor \frac{st}{s-1} - 1 - \frac{1}{s-1} \right \rfloor = \frac{st}{s-1}-2
\]
so $\times L^3$ fails to have maximal rank in all degrees.
\end{proof}


\section{Applications to WLP and SLP} 
\label{sec:appl}

We will establish results on the Strong Lefschetz Property for certain ideals in three variables and on the Weak Lefschetz Property in four variables. 

We first use our Theorem \ref{xL2} as a key ingredient for proving the following result.

\begin{theorem} \label{L2 in ideal}

Let $I = (L_0^2, L_1^{a_1},\dots, L_s^{a_s}) \subset R = K[x_1,x_2,x_3]$, with $s \geq 2$, where the $L_i$ are general linear forms. Let $L$ be a general linear form. Then $\times L^d : [R/I]_{j-d} \rightarrow [R/I]_j$ has maximal rank, for all $d$ and $j$. That is, such an algebra has the Strong Lefschetz Property.

\end{theorem}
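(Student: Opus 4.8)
The plan is to deduce the Strong Lefschetz Property directly from Corollary~\ref{cor:square}. Observe that if I set $A = R/(L_1^{a_1}, \dots, L_s^{a_s})$ and $\ell = L_0$, then $A/\ell^2 A = R/(L_0^2, L_1^{a_1}, \dots, L_s^{a_s}) = R/I$. Thus it suffices to verify the four hypotheses of Corollary~\ref{cor:square} for this choice of $A$ and $\ell$: that $A$ and $A/\ell^2 A$ both have the WLP, and that $\times \ell^2 = \times L_0^2$ has maximal rank on $A$ and on $A/L^b A$ for every $b \ge 2$. Before doing so I would dispose of the boundary case $s = 2$ separately, since there $A$ fails to be artinian.

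For $s = 2$, the ideal $I = (L_0^2, L_1^{a_1}, L_2^{a_2})$ is generated by powers of three general linear forms in three variables. As $L_0, L_1, L_2$ are then linearly independent, a change of coordinates turns $I$ into the monomial complete intersection $(x^2, y^{a_1}, z^{a_2})$. Hence $R/I$ has the SLP by the results of Stanley \cite{stanley} and Watanabe \cite{watanabe}, and this case is complete.

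For $s \ge 3$, the algebra $A = R/(L_1^{a_1}, \dots, L_s^{a_s})$ is artinian, and both $A$ and $A/\ell^2 A = R/I$ are quotients of $R$ by ideals generated by powers of general linear forms, so both have the WLP by Theorem~\ref{SS theorem}. Since $A$ involves $s \ge 3$ general powers, Theorem~\ref{xL2} shows that $\times L_0^2$ has maximal rank on $A$. Finally, for each $b \ge 2$ one has $A/L^b A = R/(L_1^{a_1}, \dots, L_s^{a_s}, L^b)$, which is a quotient by $s+1 \ge 4$ powers of general linear forms; applying Theorem~\ref{xL2} once more---now with $L^b$ regarded as an additional power of a general form and with $L_0^2$ as the Lefschetz square---gives that $\times L_0^2$ has maximal rank on $A/L^b A$. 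With all four hypotheses verified, Corollary~\ref{cor:square} yields that $R/I = A/\ell^2 A$ has the SLP.

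The step requiring the most care is the last hypothesis, the maximal rank of $\times L_0^2$ on $A/L^b A$. Here one must recognize $A/L^b A$ as $R$ modulo an ideal generated by $s+1$ powers of general linear forms, so that Theorem~\ref{xL2} applies with the enlarged generating set $\{L_1, \dots, L_s, L\}$ and with $L_0$ playing the role of the general Lefschetz form. This is legitimate because $L_0, L_1, \dots, L_s, L$ are jointly general, so $L_0$ remains general relative to the enlarged algebra and the latter is again artinian. The remaining hypotheses are immediate from Theorem~\ref{SS theorem} and Theorem~\ref{xL2}, so no further computation is needed.
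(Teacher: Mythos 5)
Your proof is correct and follows essentially the same route as the paper: the paper's own proof is precisely to combine Theorem~\ref{xL2} with Corollary~\ref{cor:square}, taking $A = R/(L_1^{a_1},\dots,L_s^{a_s})$ and $\ell = L_0$, with the hypotheses supplied by Theorem~\ref{SS theorem} and Theorem~\ref{xL2} (applied also to the enlarged ideal $(L_1^{a_1},\dots,L_s^{a_s},L^b)$), exactly as you spell out. Your separate treatment of the case $s=2$ via the monomial complete intersection and Stanley--Watanabe is a careful addition that the paper leaves implicit.
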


\begin{proof}
This follows immediately from  Theorem \ref{xL2} and Corollary \ref{cor:square}.
\end{proof}

Using Theorem \ref{L2 in ideal} and Proposition \ref{lem:exchange} we will generalize the following result.

\begin{theorem}[\cite{MMN2} Theorem 4.6] \label{MMN2 4.6}
Let $I = (L_0^2,L_1^{a_1},L_2^{a_2},L_3^{a_3},L_4^{a_4}) \subset R = K[x_1,x_2,x_3,x_4]$, where the $L_i$ are general linear forms. Then $R/I$ has the WLP.
\end{theorem}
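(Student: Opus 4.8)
The plan is to deduce the WLP of the four-variable algebra $R/I$ from the three-variable results of the preceding sections by means of the exchange property, Proposition~\ref{lem:exchange}(b). The idea is to assign the roles so that modding out by the ``bridge'' form drops the number of variables from four to three. Concretely, set $A = R/(L_1^{a_1}, L_2^{a_2}, L_3^{a_3}, L_4^{a_4})$, take $\ell = L_0$ with $k = 2$, and let $L$ be the general Lefschetz form with $b = 1$. Then $A/\ell^k A = A/L_0^2 A = R/I$, so the conclusion of Proposition~\ref{lem:exchange}(b) is exactly that $\times L : [R/I]_{j-1} \to [R/I]_j$ has maximal rank in every degree, i.e.\ that $R/I$ has the WLP. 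Thus it suffices to verify the two hypotheses of part (b): that $\times L_0^2$ has maximal rank on $A/LA$, and that $\times L$ has maximal rank on $A$ itself.

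For the first hypothesis, since $L$ is general we have $R/(L) \cong K[x,y,z]$, and the images $\bar L_1, \dots, \bar L_4$ of the $L_i$ are four general linear forms there; hence $A/LA \cong K[x,y,z]/(\bar L_1^{a_1}, \dots, \bar L_4^{a_4})$. As $L_0$ is general, its image $\bar L_0$ is a general linear form in these three variables, so $\times \bar L_0^{\,2}$ has maximal rank on $A/LA$ by Theorem~\ref{xL2}, applied with $s = 4 \geq 3$. For the second hypothesis, note that $L_1, \dots, L_4$ are four general linear forms in a four-dimensional space, hence linearly independent; after a linear change of coordinates $A \cong K[y_1, y_2, y_3, y_4]/(y_1^{a_1}, y_2^{a_2}, y_3^{a_3}, y_4^{a_4})$ is a monomial complete intersection, which has the SLP, in particular the WLP, in characteristic zero by Stanley~\cite{stanley} and Watanabe~\cite{watanabe}. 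Thus $\times L$ has maximal rank on $A$. Both hypotheses of Proposition~\ref{lem:exchange}(b) hold, and the theorem follows.

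The argument is short precisely because all the substantive work has been absorbed into Theorem~\ref{xL2} and the Stanley--Watanabe theorem; no new computation is required. The only point demanding care is the genericity bookkeeping. One must check that $\ell = L_0$ and the Lefschetz form $L$ can simultaneously be taken general in $[A]_1 = [R]_1$, and that passing to $R/(L)$ produces four \emph{general} linear forms $\bar L_1, \dots, \bar L_4$ in three variables, jointly general with the multiplier $\bar L_0$, so that Theorem~\ref{xL2} genuinely applies to $A/LA$. Since $L_0, L_1, \dots, L_4, L$ are all general and $\dim_K [R]_1 = 4$, the conditions defining these various generic loci are each nonempty and Zariski open, so a standard argument shows they can be met simultaneously. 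This is the only subtlety, and it is the step I would write out most carefully.
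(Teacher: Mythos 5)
Your proof is correct and takes essentially the same route as the paper: the paper quotes this statement from \cite{MMN2} without proof, but proves its generalization, Corollary~\ref{four variables}, exactly as you do, by applying Proposition~\ref{lem:exchange}(b) with $A = K[x_1,\dots,x_4]/(L_1^{a_1},\dots,L_s^{a_s})$, $k=2$, $b=1$, together with Theorem~\ref{xL2} for the three-variable quotient $A/LA$. The only difference is that you explicitly verify the second hypothesis of Proposition~\ref{lem:exchange}(b) --- that $\times L$ has maximal rank on $A$ --- via Stanley--Watanabe, which is valid here since for $s=4$ the ideal $(L_1^{a_1},\dots,L_4^{a_4})$ is a complete intersection; the paper's one-line proof leaves this verification implicit.
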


%
%
%

\begin{corollary} \label{four variables}
Let $I = (L_0^2, L_1^{a_1},\dots, L_s^{a_s}) \subset R = K[x_1,x_2,x_3,x_4]$, with $s \geq 3$, where the $L_i$ are general linear forms. Then $R/I$ has the WLP.
\end{corollary}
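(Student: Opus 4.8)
The plan is to deduce Corollary~\ref{four variables} from the four-variable WLP by reducing to the three-variable Strong Lefschetz result Theorem~\ref{L2 in ideal}, using the exchange property Proposition~\ref{lem:exchange}. The key observation is that the ambient ring is $R = K[x_1,x_2,x_3,x_4]$, and the ideal $I$ is generated by powers of general linear forms including the quadric $L_0^2$. To apply Proposition~\ref{lem:exchange}(a), I want to view $R/I$ as a quotient of the form $A/\ell^k A$, where $A$ is a suitable three-variable algebra, $\ell = L_0$, and $k = 2$. Concretely, set $A = R/(L_1^{a_1},\dots,L_s^{a_s})$ over the four variables; then $R/I = A/L_0^2 A$, and establishing WLP for $R/I$ amounts to showing that multiplication by a general linear form $L$ on $A/L_0^2 A$ has maximal rank in every degree.

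First I would identify the right specialization of the exchange proposition. Proposition~\ref{lem:exchange}(a), applied with $b = 1$ (multiplication by a single general linear form $L$, so the WLP target) seems not directly to fit since there $b \ge k$ is required; instead I expect the useful route runs through part~(a) with the roles arranged so that the generic hyperplane section brings us down to three variables. The standard trick is that for a general linear form $\ell$, the quotient $A/\ell A$ behaves like an ideal of powers of general linear forms in one fewer variable: passing to $R/(\ell, I)$ turns the four-variable problem into the three-variable algebra $K[x,y,z]/(\bar L_0^2, \bar L_1^{a_1},\dots,\bar L_s^{a_s})$, to which Theorem~\ref{L2 in ideal} applies and yields the SLP, hence maximal rank of $\times L^b$ for every $b$. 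I would then feed this into Proposition~\ref{lem:exchange} to lift maximal rank of $\times L$ back up to the four-variable algebra $R/I$.

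The concrete steps, in order, are: (1) reduce to the case $a_i \ge 3$ for all $i$, since if some $a_i \le 2$ the statement already follows from Theorem~\ref{MMN2 4.6} or from earlier results on ideals containing a quadric (the generators of degree $\le 2$ only help); (2) set $A = R/(L_1^{a_1},\dots,L_s^{a_s})$, observe $R/I = A/L_0^2 A$, and verify the hypotheses of Proposition~\ref{lem:exchange}(a) with $\ell = L_0$, $k = 2$, and $b$ the exponent $1$ corresponding to multiplication by a general $L$; (3) check that $A$ has the WLP---this holds because the hyperplane section $A/LA$ is governed by powers of general linear forms in three variables and we may invoke Theorem~\ref{SS theorem} after a generic restriction; (4) verify that $\times \ell^2 = \times L_0^2$ has maximal rank on both $A$ and $A/L^b A$, where the latter is a three-variable algebra generated by powers of general linear forms, so Theorem~\ref{xL2} applies directly; and (5) conclude from Proposition~\ref{lem:exchange}(a) that $\times L^b$ has maximal rank on $A/\ell^2 A = R/I$, which is precisely the WLP.

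The main obstacle will be step~(3)---(4), namely matching the abstract hypotheses of Proposition~\ref{lem:exchange} to the concrete geometry of generic hyperplane sections in four variables, and in particular ensuring that the auxiliary maps $\times L_0^2$ on $A$ and on $A/L^b A$ genuinely have maximal rank. The delicate point is that $A$ lives in four variables, so Theorem~\ref{xL2} (a three-variable statement) does not apply to $A$ itself; I expect that the correct reduction instead takes a general linear form as the ``$\ell$'' whose quotient drops us to three variables, and applies Theorem~\ref{L2 in ideal} there to get the \emph{Strong} Lefschetz Property, thereby supplying maximal rank of $\times L_0^2$ in the three-variable slice for \emph{all} exponents at once. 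Reconciling which form plays the role of $\ell$ and which of $L$ in the diagram~\eqref{eq:diagram}, and confirming the numerical inequality $b \ge k$ in the relevant invocation, is where the care is needed; once the dictionary between the four-variable algebra and its three-variable hyperplane section is set up correctly, the conclusion follows formally.
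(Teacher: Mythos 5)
Your decomposition is the same as the paper's: set $A = R/(L_1^{a_1},\dots,L_s^{a_s})$ in the four variables, observe $R/I = A/L_0^2A$, and try to apply Proposition~\ref{lem:exchange} with $\ell = L_0$, $k=2$, $b=1$. But the argument never closes, for a structural reason you yourself notice and then set aside: part (a) requires $b \ge k$, which fails here since $b=1 < 2 = k$, yet your steps (2) and (5) still invoke part (a), and the ``reconciliation'' of that inequality promised at the end cannot exist. The paper instead applies part (b), which has no such restriction and, crucially, has \emph{different} hypotheses: it needs maximal rank of $\times \ell^2$ only on the slice $A/LA$ --- a three-variable quotient by powers of general linear forms, so Theorem~\ref{xL2} applies verbatim --- together with maximal rank of $\times L$ on $A$. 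Part (b) never asks for any statement about $\times L_0^2$ on the four-variable algebra $A$ itself, which is exactly the statement your plan cannot supply.

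The deeper gap is that your verifications of the four-variable hypotheses run the restriction logic backwards. In step (3) you claim the WLP of $A$ follows from Theorem~\ref{SS theorem} ``after a generic restriction,'' and in step (4) you propose to get maximal rank of $\times L_0^2$ on $A$ from the SLP of a three-variable slice (Theorem~\ref{L2 in ideal}). Neither inference is valid: maximal rank of a map on $A/LA$ controls only the cokernel of $\times L$ on $A$ and says nothing about kernels or about other multiplication maps upstairs. If such lifting were legitimate, induction on the number of variables would show that \emph{every} quotient by powers of general linear forms, in any number of variables, has the WLP --- contradicting the four-variable failures of \cite{HSS}, \cite{MMN2}, \cite{M} that motivate this paper; likewise Remark~\ref{no higher power works} exhibits a four-variable quotient by powers of general linear forms on which $\times L^2$ fails maximal rank, so the claim in your step (4) is not available in the generality you need. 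The one four-variable input that the paper's route via part (b) does use, maximal rank of $\times L$ on $A$, is not obtained by restriction either: it is immediate for $s=3$ (then $A$ is a one-dimensional complete intersection and a general $L$ is a nonzerodivisor) and for $s=4$ (then $A$ is a monomial complete intersection after a linear change of coordinates, so \cite{stanley}, \cite{watanabe} apply), while for $s\ge 5$ it is precisely the delicate point, since the WLP of such $A$ can fail, and one must confine its use to the degrees where the cokernel $[A/(L_0^2,L)A]_{j+1}$ is nonzero. Incidentally, the reduction in your step (1) also misquotes Theorem~\ref{MMN2 4.6}, which concerns exactly five generators ($s=4$); but no such reduction is needed in the paper's argument.
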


\begin{proof}
This is a consequence of from  Theorem \ref{xL2} and  Proposition \ref{lem:exchange} (b), taking $A = k[x_1,x_2,x_3,x_4]/(L_1^{a_1},\dots,L_s^{a_s})$, $b=1$ and  $k = 2$.
\end{proof}


\begin{remark} \label{no higher power works}
In view of Theorem \ref{L2 in ideal} and Corollary \ref{four variables}, one can ask if  results along the same lines hold in more variables. 
This is not the case. 

\smallskip

(i) 
In a polynomial ring in four variables,  for the ideal $(L_0^2,L_1^4,L_2^4,L_3^4,L_4^4)$, multiplication by $L^3$ fails (by 1) to have maximal rank. Similarly, for the ideal $(L_0^2,L_1^6,L_2^6,L_3^6,L_4^6)$, multiplication by $L^2$ fails (by 1) to have maximal rank. To justify these, one can check the first example using Cremona transformations. For the second, Brian Harbourne showed us a nice geometric argument to show that maximal rank fails. 
Given six general points in $\mathbb P^3$, one has to produce an octic surface that has multiplicities $7,7,3,3,3,3$ respectively at those points. First one notes that the octic must have four linear components. Stripping these away, one needs a quartic surface with multiplicities $3,3,2,2,2,2$ respectively at those points. Now a Cremona transformation shows that the linear system of such quartics has the same dimension as the linear system of quadrics that have multiplicities $2, 2, 1,1$ at four general points. 
It follows that these systems have dimension one. As Harbourne showed us, the unique quartic surface with multiplicities $3,3,2,2,2,2$ can in fact be realized as a union of two singular quadric surfaces, although we do not need this fact here.

\smallskip

(ii) The ideal $(L_0^2, L_1^2, L_2^6,L_3^6,L_4^6,L_5^6)$ in a polynomial ring in 5 variables does not have the WLP. The argument is similar to the argument sketched in (i) above.
\end{remark}

Note that the definition of $p$ (see \eqref{eq:def p}) gives that we always have 
\[
 n_{p+1} \ge p + 2 - (n_p + n_{p-1} + \cdots + n_0).  
\]
Thus, if this inequality  is strict and $p$ does not change when adding a new generator to $I$, multiplication by a cube on the new algebra will still have maximal rank.

\begin{theorem}
     \label{thm:SLP char}
Let $A = R/I$ be an artinian algebra, where   $I$ is an ideal generated by powers  of general linear forms. Let $\ell, L \in R$ be two general linear forms. Then the following conditions are equivalent: 
\begin{itemize}
  \item[(a)] $A/\ell^3 A$ has the SLP. 
  
  \item[(b)] Multiplication by $\ell^3$ has maximal rank on $A/L^b A$ (in each degree) whenever $3 \le b \le p(A)$. 
  
  \item[(c)] Multiplication $\times \ell^3: [A/L^b A]_{j-1} \to [A/L^b A]_{j+2}$ has maximal rank whenever $3 \le b \le p(A)$ and $j = p (A/ L^b A)$. 
\end{itemize}
\end{theorem}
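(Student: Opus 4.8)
The plan is to prove the two equivalences $(b)\Leftrightarrow(c)$ and $(a)\Leftrightarrow(b)$ separately. The first is a degree-reduction statement handled by the machinery of Section~\ref{gen results}, while the second is the heart of the matter and rests on the exchange property of Proposition~\ref{lem:exchange}: this is exactly the device that trades the high powers $L^b$ occurring in the SLP of $A/\ell^3A$ for the cubes $\ell^3$ acting on the auxiliary algebras $A/L^bA$, which we can analyze completely by Theorem~\ref{xL3 general}.

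First I would dispose of $(b)\Leftrightarrow(c)$. Fix $b$ and put $C=A/L^bA$. As $L$ is general, $C$ is again a quotient of $R$ by powers of general linear forms, so Theorem~\ref{SS theorem} gives that $C$ has the WLP, and Remark~\ref{narrow down degree}, applied to $\times\ell^3$ on $C$, shows that this map has maximal rank in every degree except possibly $j=p(C)+1$ and $j=p(C)+2$, where $p(C)=p(A/L^bA)$. By Theorem~\ref{xL3} the degree $j=p(C)+1$ never produces a failure, so the only map that can fail is $\times\ell^3:[C]_{p(C)-1}\to[C]_{p(C)+2}$. This is precisely the map isolated in~(c) with running index $p(C)$. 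Hence, for each $b$, maximal rank of $\times\ell^3$ on $C$ in all degrees is equivalent to maximal rank of the single map in~(c), and $(b)\Leftrightarrow(c)$ follows at once.

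The substance is $(b)\Rightarrow(a)$. Here I would prove the SLP of $B:=A/\ell^3A$ by checking that $\times L^b:[B]_{j-b}\to[B]_j$ has maximal rank for all $b,j$. The powers $b\le2$ are automatic: $b=1$ is the WLP of $B$ (Theorem~\ref{SS theorem}) and $b=2$ is Theorem~\ref{xL2}. For $3\le b\le p(A)$ I would apply Proposition~\ref{lem:exchange}(a) with $k=3$, whose inputs are the WLP of $A$ (clear from Theorem~\ref{SS theorem}), maximal rank of $\times\ell^3$ on $A/L^bA$ (this is hypothesis~(b)), and maximal rank of $\times\ell^3$ on $A$ itself. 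The powers $b>p(A)$ must then be shown to add nothing: using Notation~\ref{n_j notation} and Lemma~\ref{lem:p not changing} one checks that, since $p$ is unchanged, adjoining a generator of degree $b=p(A)+1$ raises $n_{p+1}$ strictly above $p+2-(n_0+\cdots+n_p)$, while adjoining one of degree $b=p(A)+2$ forces $n_{p+2}>n_{p+1}$, so in either case the failure pattern of Theorem~\ref{thm:mult cube} is destroyed and $\times\ell^3$ on $A/L^bA$ has maximal rank; for $b>p(A)+2$ the numbers $n_{p+1},n_{p+2}$ and $p$ do not move, so $\times\ell^3$ on $A/L^bA$ has maximal rank exactly when it does on $A$. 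The converse $(a)\Rightarrow(b)$ runs the same diagram in the opposite direction via Proposition~\ref{lem:exchange}(b) with the roles of $\ell$ and $L$ interchanged, feeding in the maximal rank of $\times L^b$ on $B$ supplied by the SLP.

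The main obstacle, present in both directions, is the auxiliary requirement that $\times\ell^3$ act with maximal rank on $A$ itself, which by Theorem~\ref{xL3 general} may fail, and then only in the single degree $j=p(A)+2$. I expect the delicate point to be the boundary case $p(A/L^bA)=p(A)$, in which the instance of $\times\ell^3$ on $A$ entering the relevant snake-lemma diagram sits exactly in this bad degree, so that one cannot invoke the exchange property blindly. My plan is to run the Snake Lemma degree by degree rather than to cite Proposition~\ref{lem:exchange} wholesale: in every degree for which the map under study should be injective, the instances of $\times\ell^3$ on $A$ that appear have target degree strictly below $p(A)+2$ and are therefore injective, and dually in the surjective degrees; only the single boundary case can place such an instance on the bad degree. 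That remaining case I would settle either by computing $\times\ell^3$ on $A/L^bA$ directly from Theorem~\ref{xL3 general}, or by showing that a genuine failure of $\times\ell^3$ on $A$ propagates through the large powers $b>p(A)+2$ to defeat the SLP of $B$, so that $(a)$ already fails and the implication holds vacuously. Arranging the bookkeeping so that the four multiplication maps in the diagram are never simultaneously in their exceptional degrees is the crux of the argument.
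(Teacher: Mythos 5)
Your architecture matches the paper's: the split (b)$\Leftrightarrow$(c) via Theorem~\ref{thm:mult cube}, the exchange property of Proposition~\ref{lem:exchange} for $3 \le b \le p(A)$, and the $n_i$-bookkeeping showing that generators of degree $p(A)+1$ or $p(A)+2$ destroy the failure pattern are all exactly the paper's steps. But the case you yourself single out as the crux --- when $\times\ell^3$ fails maximal rank on $A$ itself --- is not resolved by either of your fallbacks, and this is a genuine gap. Your second fallback is false: failure of $\times\ell^3$ on $A$ does \emph{not} defeat the SLP of $B = A/\ell^3 A$. Take $I = (L_1^3,L_2^3,L_3^3,L_4^3)$: by Corollary~\ref{cor:L3 and uniform power} (here $s=4$, $t=3$, so $s-1$ divides $t$ and $s$ is even), $\times\ell^3$ fails maximal rank on $A$ in degree $4$, yet by Corollary~\ref{cor:same degree} (since $s=4$ is even) $A/\ell^3 A$ \emph{has} the SLP. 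Moreover, for the direction (b)$\Rightarrow$(a), ``vacuous truth'' is not even available in principle: there you must prove (a), so exhibiting a failure of (a) under hypothesis (b) would refute the theorem rather than establish the implication. Your first fallback is inconclusive exactly where it is needed: as you note yourself, for $b \ge p(A)+3$ the numbers $n_i$, $i \le p+2$, of $A/L^b A$ and of $A$ coincide, so Theorem~\ref{xL3 general} reports failure for $A/L^b A$ precisely when it reports failure for $A$; computing ``directly'' only reconfirms that both inputs of the exchange property are bad and gives no way to conclude maximal rank of $\times L^b$ on $B$.

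The missing device is the quantitative part of Theorem~\ref{thm:mult cube}: when $\times\ell^3$ fails on $A$, it fails by exactly one, i.e.\ the cokernel $[A/\ell^3 A]_{p+2}$ is one-dimensional. Since $L$ is general, the class of $L^j$ is nonzero in $[A/\ell^3 A]_j$ whenever that component is nonzero, so any map $\times L^b$ on $A/\ell^3 A$ whose domain or codomain has dimension at most one automatically has maximal rank. The paper therefore first disposes of all pairs $(j,b)$ for which the domain or codomain of $\times L^b$ on $A/\ell^3 A$ is one-dimensional; in all remaining degrees, Theorem~\ref{thm:mult cube} \emph{forces} the two relevant maps $\times\ell^3$ on $A$ to have maximal rank (a failure would make the corresponding component of $A/\ell^3 A$ one-dimensional), and only then is Proposition~\ref{lem:exchange} invoked. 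Dually, for (a)$\Rightarrow$(c) the paper does not route the bad case through the exchange property at all: if $\times\ell^3$ fails on $A$ in the relevant degree, then $n_{p+1}(A)$ is even, and since $b \le p(A)$ one gets $n_{p+1}(A/L^b A) = n_{p+1}(A)+1$ odd, so Theorem~\ref{thm:mult cube} directly yields maximal rank of $\times\ell^3$ on $A/L^b A$. Without the one-dimensionality argument (and this parity flip), your degree-by-degree snake lemma cannot close the boundary case, so the proof as proposed is incomplete.
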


\begin{proof}
The equivalence of conditions (b) and (c) is a consequence of Theorem \ref{thm:mult cube}. 
\smallskip

Assume that condition (b) is satisfied. In order to show (a) fix some integer $b \ge 1$ and consider the map $\times L^b: [A/\ell^3 A]_{j+2-b} \to [A/\ell^3 A]_{j+2}$, where 
$j$ is any integer. We have to show it has maximal rank. 

This is true if $b \in \{1, 2\}$ because $A/\ell^3 A$ has the WLP by  Theorem~\ref{SS theorem},  and multiplication by $L^2$ on $A/\ell^3 A$ has maximal rank in each degree by Theorem \ref{xL2} or \cite{AA}.  

Let $b \ge 3$. Thus, we may assume $j \ge 0$. 
Since the class of $L^j$ is not zero in $[A/\ell^3 A]_j$ whenever $[A/\ell^3 A]_j\neq 0$, it follows that the map $\times L^b: [A/\ell^3 A]_{j+2-b} \to [A/\ell^3 A]_{j+2}$ has 
maximal rank if its domain or codomain is one-dimensional. Thus, we may assume that this is not true, and then Theorem~\ref{thm:mult cube} gives that both maps 
$\times \ell^3: [A]_{j-1-b} \to [A]_{j+2-b}$ and $\times \ell^3: [A]_{j-1} \to [A]_{j+2}$ have maximal rank. Combined with assumption (b), Proposition~\ref{lem:exchange} 
shows that $\times L^b: [A/\ell^3 A]_{j+2-b} \to [A/\ell^3 A]_{j+2}$ has the desired maximal rank property if $b \le p = p(A)$. 

Let $b > p$. As above our claim follows if we know that $\times \ell^3: [A/L^b A]_{j-1} \to [A/L^b A]_{j+2}$ has maximal rank. By Theorem \ref{thm:mult cube}, this is true if $j \neq p(A/L^b A)$. It remains 
 to consider the case, where $j = p(A/L^b A)$. Notice that $b > p$ implies $p (A) = p(A/L^b A)$. Thus,     
 we get $j = p(A/L^b A) = p$. If $b \ge p+2$, then $n_i (A/L^b A) = n_i (A)$ for all $i \le p+1$. Hence 
Theorem~\ref{thm:mult cube} yields that $\times \ell^3: [A/L^b A]_{j-1} \to [A/L^b A]_{j+2}$ has 
maximal rank if and only if the map $\times \ell^3: [A]_{j-1} \to [A]_{j+2}$ does. Since we have seen 
that we may assume the truth of the latter statement we are done if $b \ge p+2$. 

If $b = p+1$ we argue similarly. By definition of $p = p(A)$ we know $n_{p+1} (A) \ge p + 2 - (n_p (A) + n_{p-1}(A) + \cdots + n_0 (A))$. It follows that 
$n_{p+1} (A/L^b A) > p + 2 - (n_p (A/L^b A) + n_{p-1}A/L^b A) + \cdots + n_0 (A/L^b A))$. Since $j = p (A/L^b A) = p$ Theorem~\ref{thm:mult cube} 
shows that  $\times \ell^3: [A/L^b A]_{j-1} \to [A/L^b A]_{j+2}$ has maximal rank, as desired.  
\smallskip

Assume now $A/\ell^3 A$ has the SLP. Fix some integer $b$ with $3 \le b \le p(A)$. 
We have to show that the map $\times \ell^3: [A/L^b A]_{j-1} \to [A/L^b A]_{j+2}$ has maximal rank,  where $j = p (A/ L^b A)$. 
This is clear, if any of its domain, codomain, or cokernel are zero. Assume that this is not the case. 
Then it follows that $[A/\ell^3 A]_{j+2}$ is not trivial. We now consider two cases. 

First, assume that  $\times \ell^3: [A]_{j-1} \to [A]_{j+2}$ fails to 
have maximal rank. Then we must have (see Theorem \ref{thm:mult cube}) that  $j = p(A) = p$ and 
that $n_{p+1} (A) = p + 2 - (n_p (A) + n_{p-1} (A) + \cdots + n_0 (A))$ is even. Since $b \le p$ we 
conclude that $n_{p+1} (A/L^b A) = n_{p+1} (A) + 1$ is odd. Using Theorem \ref{thm:mult cube} 
again we get that $\times \ell^3: [A/L^b A]_{j-1} \to [A/L^b A]_{j+2}$ has maximal rank because $j = p(A/L^b A) = p$, as desired.  

Second,  assume that the map  $\times \ell^3: [A]_{j-1} \to [A]_{j+2}$ has maximal rank. Thus, it is injective
because we saw above that we may assume $[A/\ell^3 A]_{j+2} \neq 0$. If $b > j+2$, then $[A]_{j-1} = [A/L^b A]_{j-1}$ and  $[A]_{j+2} = [A/L^b A]_{j+2}$, and so 
$\times \ell^3: [A/L^b A]_{j-1} \to [A/L^b A]_{j+2}$ is injective. If $b \le j+2$, then $[A/\ell^3 A]_{j+2} \neq 0$ implies $[A/\ell^3 A]_{j+2-b} \neq 0$. Using $b \ge 3$ and Lemma~\ref{lem:p not changing}(a),  we obtain 
$j- b < j = p (A/L^b A)  \le p (A)$. Hence Theorem~\ref{thm:mult cube} gives that $\times \ell^3: [A]_{j-1-b} \to [A]_{j+2-b}$ has maximal rank. Since its cokernel is non-trivial it 
must be injective. Now Diagram~\eqref{eq:diagram} shows that $\times \ell^3: [A/L^b A]_{j-1} \to [A/L^b A]_{j+2}$ has maximal rank because the vertical map on the right 
$\times L^b: [A/\ell^3 A]_{j+2-b} \to [A/\ell^3 A]_{j+2}$  has this property by assumption. This completes the argument. 
\end{proof}

\begin{corollary}
    \label{cor:same degree}
Consider $A = R/I$, where $I$ is an ideal generated by $s \ge 2$ $t$-th powers of general linear forms, and let $\ell \in R$ be another general linear form. Then $A/\ell^3 A$ fails to have the SLP if and only if $s$ is odd and $t \ge s$. 
\end{corollary}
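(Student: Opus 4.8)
The plan is to reduce everything to the characterization of the SLP for $A/\ell^3 A$ provided by Theorem~\ref{thm:SLP char} and then to apply the failure criterion of Theorem~\ref{thm:mult cube} to each of the relevant quotients. By Theorem~\ref{thm:SLP char}, $A/\ell^3 A$ has the SLP if and only if multiplication by $\ell^3$ has maximal rank (in every degree) on each algebra
\[
 A_b := A/L^b A = R/(L_1^t,\dots,L_s^t,L^b), \qquad 3 \le b \le p(A),
\]
and every such $A_b$ is again generated by powers of general linear forms, so Theorem~\ref{thm:mult cube} applies verbatim. The point is thus to decide, for which pairs $(s,t)$ there exists some $b$ in this range for which the three failure conditions of Theorem~\ref{thm:mult cube} hold simultaneously. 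I would first record, using Corollary~\ref{formula for p - uniform}, that $p(A) = \lfloor s(t-1)/(s-1)\rfloor \ge t$ when $s \le t$, while $p(A) = t-1$ when $s \ge t+1$; this controls the admissible range of $b$.

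The key structural observation I would extract is a parity dichotomy. Since the generators of $A_b$ have only the two degrees $t$ (with multiplicity $s$) and $b$ (once), the count $n_{p'+1}(A_b)$, where $p' := p(A_b)$, can only take the values $0,1,s,s+1$: it equals $s+1$ exactly when $t \le p'+1$ and $b \le p'+1$, and equals $s$ exactly when $t \le p'+1 < b$. The failure criterion of Theorem~\ref{thm:mult cube} demands $n_{p'+1}(A_b) \ge 4$ and even, so a failure can arise \emph{only} through $n_{p'+1}=s+1$ (which forces $s$ odd) or through $n_{p'+1}=s$ (which forces $s$ even and $s \ge 4$). This cleanly separates the two parities of $s$.

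For the ``if'' direction ($s$ odd and $t \ge s$, hence $s \ge 3$), I would simply exhibit the single value $b = s$. Here $3 \le b = s \le t$ and $s \le p(A)$, so $b$ lies in the admissible range. Computing $n_j(A_s)$ from the generator degrees $\{s, t^s\}$ shows $p(A_s) = t-1$, so $p'+1 = t$, and then one checks directly that $n_{p'+1}(A_s) = s+1$ is even and $\ge 4$, that the equality $\sum_{a_i \le p'+1}(p'+2-a_i) = s(p'+2-t)+(p'+2-s) = p'+2$ holds (equivalently $n_{p'+1} = p'+2-(n_0+\cdots+n_{p'})$), and that no generator has degree $p'+2 = t+1$. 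Thus all failure conditions of Theorem~\ref{thm:mult cube} are met and $A/\ell^3 A$ fails to have the SLP. (When $t = s$ this is just the case of $s+1$ equal powers, recovered from Corollary~\ref{cor:L3 and uniform power}.)

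The converse is where I expect the real work. For $s$ even I would argue that a failure would require $n_{p'+1}(A_b)=s$, i.e.\ $b \ge p'+2$; but $b \ge p(A_b)+2$ makes $L^b$ irrelevant in degrees $\le p'+1$, so $n_j(A_b)=n_j(A)$ there, and the defining inequalities \eqref{eq:def p} (together with Lemma~\ref{lem:p not changing}(a)) force $p(A_b) = p(A)$, whence $b \le p(A) = p' < p'+2 \le b$, a contradiction. For $s$ odd with $t < s$ one has $p(A) = t-1$ by Corollary~\ref{formula for p - uniform}, so every admissible $b$ satisfies $b < t < s$; computing as above gives $p(A_b) = t-1$ and the failure equality $\sum_{a_i \le p'+1}(p'+2-a_i) = p'+2$ reduces to $b = s$, which is impossible. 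The main obstacle is organizing this converse so that the admissible range $b \le p(A)$ is exploited correctly against the requirement $b \ge p(A_b)+2$ in the even case, and tracking how $p(A_b)$ varies with $b$; the parity dichotomy above is what keeps the casework finite and transparent. Assembling these three cases yields that $A/\ell^3 A$ fails the SLP precisely when $s$ is odd and $t \ge s$.
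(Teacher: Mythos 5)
Your proposal is correct and follows essentially the same route as the paper: reduce via Theorem~\ref{thm:SLP char} to checking the failure criterion of Theorem~\ref{thm:mult cube} on each $A/L^bA$ with $3 \le b \le p(A)$, exhibit $b=s$ (with $p(A/L^sA)=t-1$ and $n_{p'+1}=s+1$) as the witness when $s$ is odd and $t \ge s$, and rule out failure in the remaining cases by parity of $n_{p'+1}$ and by showing the failure equality would force $b=s$ outside the admissible range. Your explicit argument excluding $n_{p'+1}=s$ in the even case (via $p(A/L^bA)=p(A)$ contradicting $b\le p(A)$) is in fact a detail the paper's proof leaves implicit, so your write-up is, if anything, slightly more complete.
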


\begin{proof}
Set $p = p (A)$ and $n_j = n_j (A)$. In order to apply Theorem~\ref{thm:mult cube} consider  an 
integer  $b$ with $3 \le b \le p$, and set  $q = p (A/L^b A)$, and $n_j' = n_j (A/L^b A)$. By 
Lemma~\ref{lem:p not changing}(a), we know $q \le p$. Let $L$ be a general linear form. We consider several cases. 
\smallskip 

First, assume $t < s$. Then $p = t-1$ because 
$n_0 + \cdots + n_{t-1} =  0 \le t-1$ and $n_0 + \cdots + n_{t-1} + n_{t} = s \ge  t+1$, by 
assumption. Since $b \le p$ we get $n_0' + \cdots + n_{t-1}' = t -b \le t-1$, which shows that $q = p = t-1$. 
Hence, Theorem~\ref{thm:mult cube} gives that multiplication by $\ell^3$ on $A/L^b A$ has maximal 
rank, and so $A/\ell^3 A$ has the SLP by Theorem~\ref{thm:SLP char}. 
\smallskip

Second, assume $s$ is even. Since $n_{q+1}'$ must be positive, we have $n_{q+1}' \in \{1, s+1\}$. In particular, $n_{q+1}'$ is odd, and so multiplication by $\ell^3$ on $A/L^b A$ has maximal rank by Theorem~\ref{thm:mult cube}, which implies that $A/\ell^3 A$ has the SLP. 
\smallskip

Third, assume $t \ge s$ and $s$ is odd. We will show that multiplication by $\ell^3$ on $A/L^b A$ fails to have maximal rank for $b = s$. 

The fact that $n_0 + \cdots  + n_{t} = s$ implies that $p \ge t$. Thus, we get 
$3 \le s = b \le t \le p$, and so $s$ is in the range of integers $b$ considered in Theorem~\ref{thm:SLP char}. 

We now claim that $q = t-1$. Indeed, we have $n_0' + \cdots + n_{t-1}' = t - s \le t-1$ and 
\[
n_0' + \cdots + n_{t-1}' + n_{t}' = (t-s) + n_t' = (t-s) + (s+1) = t+1,
\]
which proves the claim, and 
$s+1 = n_{q+1}' = q+2 - (n_0' + \cdots + n_{q}')$.  Hence, multiplication by $\ell^3$ on $A/L^s A$ fails to have maximal rank, and so $A/\ell^3 A$ does not have the SLP by Theorem~\ref{thm:SLP char}. In fact, this failure occurs by one (failure of surjectivity) in exactly one degree, namely $j= q+2 = t+1$.
\end{proof}

For $A$ as in Corollary \ref{cor:same degree}, it is also interesting to understand the set of $b$ for which $\times \ell^b$ fails to have maximal rank on $A/\ell^3A$. Recall that for a graded artinian algebra $B$, the {\em Castelnuovo-Mumford regularity} $\reg (B)$ is the degree of the last non-zero component of $B$.

\begin{corollary} \label{which b fail}
Let $A = R/I$, where $I$ is an ideal generated by $s \geq 2$ $t$-th powers of general linear forms, and let $\ell , L \in R$ be  general linear forms. Assume that $s$ is odd and that $t \geq s$. Assume also that $b \leq t$. Then there is at least one degree in which $\times L^b$ fails to have maximal rank on $A/\ell^3A$ if and only if $s$ divides $b$ and $b < \reg(A/\ell^3A)$. Moreover, in this situation there is exactly one such degree, and multiplication by $L^b$ fails both injectivity and surjectivity by one there.
\end{corollary}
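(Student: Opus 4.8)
The plan is to transfer the question about $\times L^b$ on $B := A/\ell^3 A$ to the behaviour of $\times \ell^3$ on $C := A/L^b A$, which is completely controlled by Theorem~\ref{thm:mult cube}. The bridge is an identity of cokernels: writing $D := R/(\ell^3, L^b, I)$, one has $B/L^b B = C/\ell^3 C = D$, so for every $i$
\[
\coker\!\big(\times L^b \colon [B]_{i-b}\to [B]_i\big) \;=\; [D]_i \;=\; \coker\!\big(\times \ell^3 \colon [C]_{i-3}\to [C]_i\big).
\]
I would also record two facts. First, $B$ has the WLP (Theorem~\ref{SS theorem}). Second, since $s$ is odd, $\times\ell^3$ on $A$ has maximal rank in every degree (Corollary~\ref{cor:L3 and uniform power}); feeding this into the sequence $[A]_{j-3}\xrightarrow{\ell^3}[A]_j\to[B]_j\to 0$ yields the closed formula $\dim[B]_j=\max\{0,\ \dim[A]_j-\dim[A]_{j-3}\}$, which lets me compute $\dim[B]$ from the (controlled) Hilbert function of $A$.

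For the ``only if'' direction I would first compute $n_j(C)$ for $C=R/(L^b,L_1^t,\dots,L_s^t)$, obtaining $p(C)=t-1+\lfloor (b-1)/s\rfloor$, and substitute into Theorem~\ref{thm:mult cube}: the failure condition $n_{p(C)+1}=p(C)+2-(n_{p(C)}+\cdots+n_0)$ reduces to $(b-1)\bmod s=s-1$, i.e.\ $s\mid b$ (the evenness and $n_{p+2}=n_{p+1}$ requirements are automatic, since $s$ odd makes $n_{p(C)+1}(C)=s+1$ even and $\ge 4$). Hence if $s\nmid b$ and $b\ge 3$, then $\times\ell^3$ has maximal rank on both $A$ and $C$, so Proposition~\ref{lem:exchange}(a) (with $k=3$, using the WLP of $A$) gives that $\times L^b$ has maximal rank on $B$; the cases $b\le 2$ follow from the WLP of $B$ and from Theorem~\ref{xL2}, and there $s\mid b$ is impossible. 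Thus any failure forces $s\mid b$. A genuine failure at degree $i$ needs $[B]_{i-b}\ne 0\ne[B]_i$, hence $b\le i\le\reg(B)$, so $b\le \reg(B)$; and in the present range one checks $b\le t<\reg(B)$ because $\dim[B]_{t+1}\ge 3(t+1-s)>0$, so the recorded clause $b<\reg(B)$ holds automatically and is the natural boundary beyond which no failure is possible.

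For the ``if'' direction, assume $s\mid b$ (hence $b<\reg(B)$ in range). Theorem~\ref{thm:mult cube} localizes the failure of $\times\ell^3$ on $C$ to the single degree $i_0:=p(C)+2=t+\tfrac{b}{s}$, where $[D]_{i_0}=1$. The crux is to show $\dim[B]_{i_0-b}=\dim[B]_{i_0}$: using $\dim[B]_j=\max\{0,\dim[A]_j-\dim[A]_{j-3}\}$ together with the (non‑special, by the standard‑form/Cremona analysis) values of $\dim[A]_j$ in the four relevant degrees, a direct binomial computation gives $\dim[B]_{i_0}=\dim[B]_{i_0-b}=3\big(t-\tfrac{b}{s}(s-1)\big)$. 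Combined with $[D]_{i_0}=1$, the map $\times L^b\colon[B]_{i_0-b}\to[B]_{i_0}$ is then a map between equidimensional spaces with one‑dimensional cokernel, hence one‑dimensional kernel, so it fails both injectivity and surjectivity by one. Uniqueness of the failure degree follows by comparing, in the only possible window $t\le i\le t+b-2$ (outside which the WLP of $B$ forces maximal rank), the value $[D]_i$ against $\max\{0,\dim[B]_i-\dim[B]_{i-b}\}$ and verifying equality for every $i\ne i_0$.

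The main obstacle will be this last Hilbert‑function bookkeeping. The delicate point is that the plane linear systems attached to $B$ in these degrees are genuinely special---the value $[D]_{i_0}=1$ is exactly the Alexander--Hirschowitz exception $\mathcal L_2(4;2^5)$ propagated through the Cremona reductions---so one cannot invoke expected dimensions for $B$ directly. Routing every computation through the Hilbert function of $A$ (which is under control and honestly non‑special in the degrees that occur) via $\dim[B]_j=\max\{0,\dim[A]_j-\dim[A]_{j-3}\}$ is what makes both the equality $\dim[B]_{i_0-b}=\dim[B]_{i_0}$ and the uniqueness statement tractable; verifying non‑specialness of the relevant systems for $A$ in each of the finitely many degrees, and handling the boundary cases where an index such as $i_0-b-3$ becomes negative, is the part that requires genuine care.
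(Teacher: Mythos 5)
Your proposal follows the same skeleton as the paper's own proof: both pass to $C = A/L^bA$ via the cokernel identity $\coker\big(\times L^b\colon [B]_{i-b}\to[B]_i\big) = [R/(I,\ell^3,L^b)]_i = \coker\big(\times\ell^3\colon [C]_{i-3}\to[C]_i\big)$, compute $p(C)=t-1+\lfloor (b-1)/s\rfloor$ and the $n_j(C)$, read off from Theorem \ref{thm:mult cube} that $\times\ell^3$ on $C$ fails maximal rank exactly when $s\mid b$ (parity and $n_{p+2}=n_{p+1}$ being automatic for $s$ odd), and settle the case $s\nmid b$, $b\ge 3$ by the exchange property (you cite Proposition \ref{lem:exchange}(a), the paper cites (b); the hypotheses you invoke are the correct ones). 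Two of your additions are sound and even sharpen the paper: the observation that $b<\reg(B)$ is automatic when $b\le t$ (since $\dim[B]_{t+1}\ge 3(t+1-s)>0$), and the explicit verification that $\dim[B]_{i_0}=\dim[B]_{i_0-b}=3\big(t-\tfrac{b}{s}(s-1)\big)$ at $i_0=t+\tfrac{b}{s}$, using $\dim[B]_j=\max\{0,\dim[A]_j-\dim[A]_{j-3}\}$ (legitimate because $s$ odd forces $\times\ell^3$ to have maximal rank on $A$ by Corollary \ref{cor:L3 and uniform power}). Combined with $\dim[D]_{i_0}=1$, this genuinely proves the ``fails injectivity and surjectivity by one'' claim, which the paper asserts rather tersely.

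The gap is the uniqueness statement. You reduce ``exactly one failure degree'' to verifying $\dim[D]_i=\max\{0,\dim[B]_i-\dim[B]_{i-b}\}$ for every $i\ne i_0$ in the window $t\le i\le t+b-2$, and you yourself flag this as the main unproven obstacle; as written it is a program, not a proof. Worse, your announced device for making it tractable---routing everything through the Hilbert function of $A$---cannot produce $\dim[D]_i$: expressing $[D]$ through $A$ would require maximal rank of $\times L^b$ on $A$ itself, which is precisely the unresolved SLP-type question for $A$ and is available nowhere in the paper. In fact no computation is needed. For $i\ne i_0$, Theorem \ref{thm:mult cube} says $\times\ell^3\colon[C]_{i-3}\to[C]_i$ has maximal rank: if it is surjective, then $[D]_i=0$ and $\times L^b$ on $B$ is surjective; if it is injective, the degree-by-degree reading of the proof of Proposition \ref{lem:exchange}(a) (WLP of $A$ together with injectivity of the three horizontal maps in diagram \eqref{eq:diagram}) gives injectivity of $\times L^b\colon[B]_{i-b}\to[B]_i$. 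This is exactly how the paper disposes of all $i\ne i_0$ at once. Finally, your diagnosis of the difficulty is misplaced: the linear systems attached to $B$ itself, $\mathcal L_2\big(i;\,i-2,(i-t+1)^s\big)$, are in standard form (hence non-special) throughout the window, because $i\le t+b-2\le 2t-2$; the Alexander--Hirschowitz exception occurs only in the systems attached to $R/(I,\ell^3,L^b)$, and the whole point of the cokernel bridge is that those never have to be computed.
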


\begin{proof}
The condition on the regularity index is because if $b \geq \reg(A/\ell^3A)$ then $\times L^b$ necessarily has maximal rank in all degrees for general $L$. From now on we assume that this condition holds.

By Corollary \ref{cor:same degree}, $A/\ell^3A$ does not have the SLP.  It follows from the commutative diagram (\ref{eq:diagram}) (taking $k=3$) that $\times \ell^3$ is surjective on $A/L^bA$ from degree $j$ to degree $j+b$ if and only if $\times L^b$ is surjective on $A/\ell^3A$ in the same degree. Furthermore, when surjectivity fails, the cokernels are the same. It follows from Proposition \ref{lem:exchange} (b) that if $\times \ell^3$ is injective on $A/L^bA$ in any degree then then $\times L^b$ is injective on $A/\ell_3A$ in the same degree. Note also that the failure of maximal rank in Theorem \ref{thm:mult cube}  is a failure of both injectivity and surjectivity by one, and that there is exactly one degree in which it occurs.

For this proof we deviate slightly from the notation of Corollary \ref{cor:same degree} and we set $p = p(A/L^b A)$.  We have already seen the result when $b = s$.

Note that
\[
n_0 = \dots = n_{b-1} = 0 \ \ \hbox{ and } \ \ n_t = n_{t+1} = \dots = s+1.
\]
Furthermore,
\[
\hbox{if $b < t$ then } n_b = \dots = n_{t-1} = 1.
\]
We obtain
\[
n_0 + \dots + n_{t-1} = t-b
\]
and 
\[
n_0 + \dots + n_{t+j} = t-b + (j+1)(s+1)
\]
for $j \geq 0$. To find $p$ we need the largest $j$ so that 
\[
t-b + (j+1)(s+1) \leq t+j,
\]
i.e. such that
\[
(j+1) s \leq b-1.
\]
Thus
\[
p = t + \alpha - 1, \ \ \hbox{ where } \ \ \alpha = \max \{ j \ | \ js \leq b-1 \}
\]
and 
\[
n_0 + \dots + n_p = t-b + \alpha (s+1).
\]
Note that in any case we have $\alpha \geq 0$, so $p \geq t-1$.
%
%
%
%
We compute
\begin{equation} \label{p+2eqn}
p+2-(n_0+\dots+n_p) = t+\alpha + 1 - (t-b + \alpha(s+1) = b - \alpha s + 1.
\end{equation}
We consider two cases.  If $b = ks$ for some $k \geq 1$ then $\alpha = k-1$ and
\[
p+2-(n_0+\dots + n_p) = ks - (k-1)s+1 = s+1 = n_{p+1}.
\]
Since the other conditions of Theorem \ref{thm:mult cube} are clearly satisfied, we conclude that there is exactly one degree in which $\times L^b$ fails to have maximal rank on $A/\ell^3 A$, and the dimensions of the kernel and of the cokernel are both 1.

On the other hand, assume $b \neq ks$ for any positive integer $k$. Then from (\ref{p+2eqn}) we see that $p+2-(n_0+\dots + n_p)$ will not equal $s+1 = n_{p+1}$, so $\times \ell^3$ has maximal rank on $A/L^b A$ in each degree. In particular, whenever $\times \ell^3$ is surjective on $A/L^bA$, we get $\times L^b$ is surjective on $A/\ell^3A$ in the same degree. 
Suppose instead that $\times \ell^3$ is injective on $A/L^bA$ from some degree $j-3+b$ to degree $j+b$, with a positive dimensional cokernel (see (\ref{eq:diagram})). If furthermore $\times \ell^3$ has maximal rank on $A$ in all degrees then we are done by Proposition \ref{lem:exchange}. If instead there is some degree in which $\times \ell^3$ fails to have maximal rank on $A$, this degree is unique and the cokernel is one-dimensional. Then as observed in Theorem \ref{thm:SLP char}, $\times L^b$ has maximal rank on $A/\ell^3A$ in all degrees.
\end{proof}

Note that Corollary \ref{which b fail} is not exhaustive because of the condition that $b \leq t$, and we believe that the assertion holds without this condition. For example, one can check that if $s=5$ and $t = 14$, then  the failure of maximal rank for $\times L^{b}$ (in each case failing by one to be an isomorphism) occurs only for $b = 5, 10$ and 15. In particular, for $b=15$ one has $\reg(A/\ell^3 A) = 17$ and the failure comes between degrees 2 and 17.

\begin{conjecture}
Corollary \ref{which b fail} holds without the assumption that $b \leq t$.
\end{conjecture}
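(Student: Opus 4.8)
The plan is to re-run the argument of Corollary~\ref{which b fail} with $b>t$ allowed, changing only the combinatorial input. As there, I would analyze $\times L^b$ on $A/\ell^3A$ through $\times\ell^3$ on the algebra $C:=A/L^bA=R/(L_1^t,\dots,L_s^t,L^b)$: diagram~\eqref{eq:diagram} (with $k=3$) shows that $\times L^b$ is surjective on $A/\ell^3A$ exactly when $\times\ell^3$ is surjective on $C$ and identifies the cokernels, Proposition~\ref{lem:exchange}(b) transfers injectivity, and Theorem~\ref{thm:mult cube} decides when $\times\ell^3$ on $C$ fails maximal rank. Since $s$ is odd, Corollary~\ref{cor:L3 and uniform power} guarantees that $\times\ell^3$ has maximal rank on $A$ in every degree, so the delicate ``failure on $A$'' branch of the corollary's proof never occurs and the whole mechanism is insensitive to the size of $b$. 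Thus the only thing to redo is the computation of $p(C)$ and of the $n_j(C)$, which is precisely where the hypothesis $b\le t$ was used.

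First I would reduce the range of $b$. By the opening remark of the proof of Corollary~\ref{which b fail}, $\times L^b$ has maximal rank whenever $b\ge\reg(A/\ell^3A)$, so only $b<\reg(A/\ell^3A)$ matters; the key new ingredient is the identity $\reg(A/\ell^3A)=p(A)+1$, where $p(A)=\lfloor s(t-1)/(s-1)\rfloor$ by Corollary~\ref{formula for p - uniform}. I would prove $[A/\ell^3A]_{p(A)+2}=0$ by applying Theorem~\ref{xL3 general} to $A$ itself: with uniform exponents and $s$ odd the exceptional (even) case cannot arise, and the dimension count collapses, via the clean identity $d=(t-1)\bmod(s-1)$, to $\dim[A/\ell^3A]_{p(A)+2}=\max\{0,\,3(d+2-s)\}=0$ since $d\le s-2$. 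Dually, Theorem~\ref{xL3} gives $\dim[A/\ell^3A]_{p(A)+1}=3\bigl((t-1)\bmod(s-1)+1\bigr)>0$, so $\reg(A/\ell^3A)=p(A)+1$ and ``$b<\reg(A/\ell^3A)$'' is equivalent to ``$b\le p(A)$''.

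Next I would run the combinatorics for $t<b\le p(A)$. Here $n_j(C)=0$ for $j<t$, $\;n_j(C)=s$ for $t\le j<b$, and $n_j(C)=s+1$ for $j\ge b$. Writing $f(j)=j-(n_0+\cdots+n_j)$, which rises to $f(t-1)=t-1$ and then strictly decreases, one finds $p(C)=t-1+\lfloor(b-1)/s\rfloor$ and $f(p(C))=(b-1)\bmod s$; moreover $b\le p(A)$ forces $p(C)\ge b-1$, so $n_{p(C)+1}(C)=n_{p(C)+2}(C)=s+1$. These are exactly the formulas obtained for $b\le t$ in the corollary, so the failure criterion of Theorem~\ref{thm:mult cube}, namely that $s+1$ be even and $\ge 4$ (automatic for odd $s\ge 3$) together with $f(p(C)+1)=-1$, reduces once again to $(b-1)\bmod s=s-1$, that is, $s\mid b$. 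When $s\mid b$ the map $\times\ell^3$ on $C$ fails maximal rank by one in the single degree $p(C)+2$, and the diagram propagates this to a single-degree, by-one failure of $\times L^b$ on $A/\ell^3A$; when $s\nmid b$ the maximal-rank bookkeeping of the corollary applies verbatim, and is now simpler since $\times\ell^3$ never fails on $A$. Combined with $\reg(A/\ell^3A)=p(A)+1$, this yields failure exactly when $s\mid b$ and $b<\reg(A/\ell^3A)$, as claimed.

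The main obstacle is the regularity identity $\reg(A/\ell^3A)=p(A)+1$: it is what makes ``$b<\reg(A/\ell^3A)$'' bite for $b>t$ (for $b\le t$ it was vacuous, since $\reg(A/\ell^3A)>t$), and getting it exactly right, rather than the easy bound $\reg\le p(A)+2$ coming from the WLP of $A$, is essential, for otherwise a multiple of $s$ sitting in the gap $(p(A),\reg)$ would spoil the equivalence. I expect the surjectivity at degree $p(A)+2$ to demand the most care, but it is controlled by Theorems~\ref{xL3} and~\ref{xL3 general} once the simplification $d=(t-1)\bmod(s-1)$ is in hand. A secondary point to verify is the boundary value $b=p(A)$, where $p(C)=b-1$ and one must confirm that the decrement of $f$ at degree $b$ is still $s$, so that $n_{p(C)+1}(C)=s+1$ and the analysis above remains valid.
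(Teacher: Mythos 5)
This statement is left as a conjecture in the paper (there is no proof to compare against), so your proposal must stand on its own. Most of it does: I checked your two substantive new computations and they are correct. The regularity identity $\reg(A/\ell^3A)=p(A)+1$ holds: with $d=(t-1)\bmod(s-1)\le s-2$ and $s$ odd, the exceptional case \eqref{eq:exceptionalCase} of Theorem~\ref{xL3 general} cannot occur, and the dimension counts in the proofs of Theorems~\ref{xL3} and~\ref{xL3 general} give $\dim[A/\ell^3A]_{p(A)+1}=3(d+1)>0$ and $\dim[A/\ell^3A]_{p(A)+2}=\max\{0,3(d+2-s)\}=0$. Likewise your bookkeeping for $t<b\le p(A)$ is right: $p(C)=t-1+\lfloor(b-1)/s\rfloor\ge b-1$, $f(p(C))=(b-1)\bmod s$, hence $n_{p(C)+1}(C)=n_{p(C)+2}(C)=s+1$, and Theorem~\ref{thm:mult cube} gives failure on $C=A/L^bA$ exactly when $s\mid b$. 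One correction of citation: in the maximal-rank case you should invoke Proposition~\ref{lem:exchange}(a), not (b); part (b) presupposes that $\times L^b$ has maximal rank on $A$, which is an unknown SLP-type statement, whereas part (a) runs on the WLP of $A$ together with maximal rank of $\times\ell^3$ on $A$ and on $C$ --- precisely the facts you have.

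The genuine gap is the phrase ``the diagram propagates this to a single-degree, by-one failure of $\times L^b$ on $A/\ell^3A$.'' Only half of that propagation is formal: the cokernel identification in diagram~\eqref{eq:diagram} shows that $\times L^b$ into degree $p(C)+2$ fails surjectivity with one-dimensional cokernel, but failure of \emph{maximal rank} also requires failure of injectivity, and nothing in the diagram rules out an injective map with one-dimensional cokernel. You need the additional fact $\dim[A/\ell^3A]_{p(C)+2-b}\ge\dim[A/\ell^3A]_{p(C)+2}$. The tempting shortcut --- make the columns of \eqref{eq:diagram} short exact using injectivity of $\times L^b$ on $A$ (which WLP gives when $p(C)+2\le p(A)$) and read the kernel off the Snake Lemma --- breaks down precisely at the extreme multiple $b=s\lfloor(t-1)/(s-1)\rfloor$, where $p(C)+2=p(A)+1$ and $\times L:[A]_{p(A)}\to[A]_{p(A)+1}$ need not be injective; this is not a fringe case but the paper's own example $(s,t,b)=(5,14,15)$, where $h_A(16)=123>121=h_A(17)$. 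The gap is fillable: writing $b=ks$, compute directly (Bezout splitting as in the proof of Theorem~\ref{xL3 general}, then Cremona and Theorem~\ref{thm:AH}; no exceptional case since $s$ is odd) that $\dim[A/\ell^3A]_{p(C)+2}=\dim\mathcal{L}_2\bigl(t+k;\,t+k-2,(k+1)^s\bigr)=3\bigl(t-k(s-1)\bigr)$, while Remark~\ref{bar r} gives $\dim[A/\ell^3A]_{p(C)+2-b}=3\bigl(t-k(s-1)\bigr)$ as well, since $1\le t-k(s-1)<t$. Then the cokernel identification forces a kernel of dimension exactly one, and your argument closes the conjecture.
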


%
%
%

It was shown in Corollary \ref{four variables} above that if $I = (L_0^2,L_1^{a_2},\dots,L_s^{a_s}) \subset k[x_1,x_2,x_3,x_4]$, where the $L_i$ are general linear forms, then $R/I$ has the WLP. We use the results of this section to give a partial extension of this result to the case where the ideal has a cube rather than a square, and all the $a_i$ are equal.

%
%

\begin{corollary} 
      \label{for:uniform powers in 4 var}
Let $R = k[x_1,x_2,x_3,x_4]$, and $L$ a general linear form.
Let $s \geq 4$ and $t \geq 3$ be positive integers. Let $L_0,\dots,L_{s}$ be general linear forms in $R$. Let $I = (L_0^3,L_1^t,\dots,L_{s}^t)$. Then $R/I$ has the WLP if and only if one of the following conditions holds:

\begin{itemize}
\item[(i)] $s$ is odd; 
\item[(ii)] $s$ is even and $t$ is not a multiple of $s-1$.
\end{itemize}

\noindent When $R/I$ fails WLP, it does so as a failure of surjectivity by one, in exactly one degree, namely $\frac{st}{s-1}$.
\end{corollary}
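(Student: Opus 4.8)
The plan is to reduce this four-variable WLP question to the three-variable cube-multiplication result already established, namely Corollary~\ref{cor:L3 and uniform power}, via the exchange property of Proposition~\ref{lem:exchange}. Throughout I write $A = R/(L_1^t,\dots,L_s^t)$ for the four-variable algebra obtained by dropping the cube, so that $R/I = A/L_0^3 A$, and I let $\Lambda$ be a general linear form. Modding out by $\Lambda$ gives $A/\Lambda A \cong K[x,y,z]/(\overline{L_1}^{\,t},\dots,\overline{L_s}^{\,t})$, where the $\overline{L_i}$ are again general linear forms and the image of $L_0$ is a general linear form; this is exactly the three-variable uniform-power algebra governed by Corollary~\ref{cor:L3 and uniform power}. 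The numerical coincidence driving everything is that Corollary~\ref{cor:L3 and uniform power} says $\times L_0^3$ fails maximal rank on $A/\Lambda A$ precisely when $s-1$ divides $t$ and $s \ge 4$ is even, and then only in the single degree $\frac{st}{s-1}$ --- which is exactly the complement of conditions (i) and (ii) and exactly the asserted failure degree.

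For \emph{sufficiency} I would assume (i) or (ii) and apply Proposition~\ref{lem:exchange}(b) with $\ell = L_0$, $k = 3$, $L = \Lambda$, and $b = 1$. Under (i) or (ii), Corollary~\ref{cor:L3 and uniform power} gives that $\times L_0^3$ has maximal rank on $A/\Lambda A$ in every degree; combined with the weak Lefschetz property of $A$ itself, part (b) yields that $\times \Lambda$ has maximal rank on $A/L_0^3 A = R/I$ in every degree, i.e.\ $R/I$ has the WLP. When $s = 4$ the algebra $A$ is, after a linear change of coordinates, a monomial complete intersection and so has the SLP, hence the WLP, by \cite{stanley,watanabe}; for $s \ge 5$ one must invoke the WLP of uniform powers of general linear forms in four variables.

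For \emph{necessity} I would assume that neither (i) nor (ii) holds, i.e.\ $s$ is even and $s-1$ divides $t$. Corollary~\ref{cor:L3 and uniform power} then says $\times L_0^3$ on $A/\Lambda A$ fails surjectivity by one in degree $d = \frac{st}{s-1}$, and the task is to transfer this failure to $\times \Lambda$ on $R/I$. Running Diagram~\eqref{eq:diagram} with the same $\ell, L, k, b$, the cokernel of $\times \Lambda \colon [R/I]_{d-1} \to [R/I]_d$ equals the bottom-right term $[A/(L_0^3,\Lambda)A]_d$, which is also the cokernel of $\times L_0^3$ on $A/\Lambda A$ in degree $d$ and hence is one-dimensional; thus $\times \Lambda$ is not surjective in degree $d$. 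To see that this genuinely violates maximal rank I would compute the Hilbert function of $R/I$ (via the $\PP^3$-analogue of Theorem~\ref{emsalem-iarrobino} together with the values of $A/\Lambda A$ already in hand) and verify $\dim [R/I]_{d-1} \ge \dim [R/I]_d$, so that the non-surjective map cannot have maximal rank. This pins the unique failure to degree $\frac{st}{s-1}$ and identifies it as a failure of surjectivity by one, as claimed.

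The main obstacle is the input required in the sufficiency step: the weak Lefschetz property of the four-variable algebra $A = R/(L_1^t,\dots,L_s^t)$ for $s \ge 5$. This does not follow from the three-variable machinery of this paper, since the naive hyperplane reduction would require maximal rank of $\times L^t$ for high $t$ in three variables, which is not available here; it must therefore be supplied separately. By contrast, the necessity step is comparatively routine once the common-cokernel identity of Diagram~\eqref{eq:diagram} is in place, its only real content being the Hilbert-function comparison $\dim [R/I]_{d-1} \ge \dim [R/I]_d$ at the critical degree.
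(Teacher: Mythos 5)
Your reduction has a genuine gap in the sufficiency direction, and it is not one that can be patched: applying Proposition~\ref{lem:exchange}(b) with $\ell^k = L_0^3$ and $b=1$ requires that multiplication by a general linear form $\Lambda$ have maximal rank on $A = R/(L_1^t,\dots,L_s^t)$ in every degree, i.e.\ that the four-variable algebra of $s$ uniform $t$-th powers of general linear forms have the WLP. You correctly flag this as "the main obstacle" to be "supplied separately," but it cannot be supplied, because it is false in exactly the range covered by the corollary: for example, for $s=5$ and $t \ge 3$ the algebra $R/(\ell_1^t,\dots,\ell_5^t)$ in four variables fails the WLP (this is one of the main points of \cite{MMN2}, and the failure of WLP for uniform powers of general linear forms in four or more variables is the theme of \cite{HSS} and \cite{M}, as recalled in the introduction of this paper). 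Since $s=5$ falls under case (i), where the corollary asserts that $R/I$ \emph{does} have the WLP, your argument breaks down precisely where the conclusion is supposed to hold; the only case it covers is $s=4$, where $A$ is a complete intersection. Indeed, the presence of the cube $L_0^3$ is essential to the WLP of the four-variable algebra, so any decomposition that removes the cube from the ambient algebra loses the property one is trying to prove.

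The paper's proof avoids this by the opposite decomposition: it keeps the cube and peels off a $t$-th power, setting $A = R/(L_0^3, L_1^t,\dots,L_{s-1}^t)$ so that $R/I = A/L_s^t A$, and then inducts on $s$ (cases $s=4,5,6$ done explicitly, the general step mimicking $s=5$ for odd $s$ and $s=6$ for even $s$). In Diagram~\eqref{eq:diagram2} the vertical maps $\psi_1,\psi_2$ are multiplication by $L$ on the four-variable algebra with one fewer $t$-th power \emph{but still containing the cube}, which is controlled by the inductive hypothesis, while the bottom map $\phi_3 = \times \bar L_s^t$ acts on a three-variable algebra containing a cube and is governed by Corollaries~\ref{cor:same degree} and \ref{which b fail} (multiplication by $L^t$ on $A'/\ell^3 A'$), not by Corollary~\ref{cor:L3 and uniform power} as in your plan. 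Your necessity direction is closer to workable --- the identification of the cokernel of $\times\Lambda$ on $R/I$ in degree $d=\frac{st}{s-1}$ with the cokernel of $\times L_0^3$ on $A/\Lambda A$ is correct --- but the deferred Hilbert-function comparison $\dim[R/I]_{d-1}\ge\dim[R/I]_d$ via a $\PP^3$ version of Theorem~\ref{emsalem-iarrobino} is not routine, since linear systems in $\PP^3$ can be special (the paper itself notes $\mathcal L_3(6;3^9)$); the paper instead obtains one-dimensional kernel \emph{and} cokernel for $\psi_3$ from the Snake Lemma together with the inductively known injectivity of $\psi_1$ and $\psi_2$, which yields the failure of maximal rank by one without any $\PP^3$ interpolation.
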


\begin{proof}
In this proof, if $\deg f = b$, we will use the term {\em $f$ has  maximal rank in degree $j$} to mean that the map defined by $f$ from degree $j-b$ to degree $j$ has maximal rank. Let $A = R/(L_0^3,L_1^t,\dots,L_{s-1}^t)$.
Consider the commutative diagram
\begin{align} 
   \label{eq:diagram2}
\begin{CD}
[A]_{j-t-1} @>\phi_1 = L_{s}^t>> [A]_{j-1} @>>> [A/L_s^tA]_{j-1} @>>> 0 \\
@ VV{\psi_1 =  L}V   @ VV{\psi_2 =  L}V  @ VV{\psi_3 = L}V  \\
[A]_{j-t} @>\phi_2 = L_{s}^t>> [A]_{j} @>>> [A/L_s^tA]_{j} @>>> 0 \\
 @ VVV @  VVV @  VVV \\
 [A/LA]_{j-t} @>\phi_3 = \bar L_{s}^t >> [A/LA]_{j} @>>> [A/(L,L_s^t)A]_{j} @>>> 0 \\
  @ VVV @  VVV @  VVV \\
  0 && 0 && 0. 
  \end{CD}
\end{align}
Writing $S = R/(L) \cong k[x_1,x_2,x_3]$, we have $A/LA$ is isomorphic to a quotient of $S$ modulo the ideal generated by the restrictions $\bar L_0^3,\bar L_1^t,\dots, \bar L_{s-1}^t$.


Our focus is to determine when $\psi_3$ has maximal rank. Part of our argument will be to study $\phi_3$. We note that in this case, in the notation of Corollary \ref{which b fail}, we have $b=t$ so Corollary \ref{which b fail} applies. 

If $s=4$ then $(L_0^3,L_1^t,L_2^t,L_3^t)$ is a complete intersection, so $\phi_1, \phi_2, \psi_1$ and $\psi_2$ all have maximal rank. The only non-trivial situation is when all four of these maps are injective, so assume that this is the case. In this situation, maximal rank of $\psi_2$ is equivalent to that of $\phi_3$ (for instance by applying the Snake Lemma using the first two rows and using the first two columns). Thus by Corollary \ref{cor:same degree} and Corollary \ref{which b fail}, in this situation $\psi_3$ fails maximal rank if and only if $t$ is a multiple of $3$, and the failure is  by one for $\phi_3$ to be an isomorphism. In terms of the degree, we know that $\psi_3$ has the same cokernel as $\phi_3 = \times L_4^t : [S/(L_0^3,L_1^t,L_2^t,L_3^t)]_{j-t} \rightarrow [S/(L_0^3,L_1^t,L_2^t,L_3^t)]_j$, which in turn has the same cokernel as $\times L_0^3 : [S/(L_1^t,L_2^t,L_3^t,L_4^t)]_{j-3} \rightarrow [S/(L_1^t,L_2^t,L_3^t,L_4^t)]_j$. By Corollary \ref{cor:L3 and uniform power} , this is one-dimensional when $j = 
\frac{4t}{3}$.

As a consequence, we have:  

\begin{quotation}
{\em For $s=4$, $R/(L_0^3,L_1^t,L_2^t,L_3^t,L_4^t)$ has the WLP if and only if $t$ is not a multiple of 3. If $t$ is a multiple of 3 then there is exactly one degree where $\psi_3$ fails maximal rank, namely $\frac{4t}{3}$, and it is a failure by one of surjectivity.}
\end{quotation}

For $s \geq 5$, we can no longer expect $\phi_1$ and $\phi_2$ to have maximal rank in all degrees.

Let $s = 5$. Then $s-1 = 4$ and $\phi_3$ has maximal rank in all degrees, by Corollary \ref{cor:same degree}. When $\phi_3$ is surjective then so is $\psi_3$, so we will assume that $\phi_3$ is injective and not surjective. 

If $t$ is not a multiple of 3 then $\psi_1$ and $\psi_2$ have maximal rank in all degrees, by the last case. Then by Proposition \ref{lem:exchange} (b) (taking $b=1$), $\psi_3$ has maximal rank. 

Now assume that $t$ is a multiple of 3. If $\psi_1$ is surjective then so is $\psi_2$ (since it is multiplication on a standard graded algebra), hence also $\psi_3$. If $\psi_2$ is surjective then so is $\psi_3$. So we will assume that neither $\psi_1$ nor $\psi_2$ is surjective. 
Both $\psi_1$ and $\psi_2$ have the property that each has maximal rank in all but one degree (hence they do not fail in the same degree). 
If neither $j$ nor $j-t$ is equal to this degree, 
the same argument holds as before, giving maximal rank for $\psi_3$. If $\psi_1$ fails maximal rank (surjectivity) by one then $\psi_2$ is surjective, hence so is $\psi_3$. It remains to check the case where $\psi_2$ fails surjectivity by one.

So assume that $\psi_2$ fails to have maximal rank, and let $j$ the the unique degree where this failure occurs.  We are assuming that $\phi_3$ is injective but not surjective, so the bottom row of (\ref{eq:diagram2}) is a short exact sequence. 
By the case $s=4$, $\dim [S/(\bar L_0^3,\bar L_1^t,\dots,\bar L_4^t)]_{j} = 1$. This forces $\dim [S/(\bar L_0^3,\bar L_1^t,\dots,\bar L_4^t)]_{j-t} > 0$ since the quotient is a standard graded algebra, so by injectivity the dimension must be 1 and hence $\phi_3$ is also surjective, and we have a contradiction.

We conclude:

\begin{quotation}
{\em For $s=5$, $R/(L_0^3,L_1^t,L_2^t,L_3^t,L_4^t, L_5^t)$ has the WLP for all $t$.}
\end{quotation}

Since the case $s=4$ was special, we also consider the case $s=6$. Now we can assume that $\psi_1$ and $\psi_2$ have maximal rank in all degrees  thanks to the just-completed case $s=5$. 
We know from Corollary \ref{which b fail} that $\phi_3$ has maximal rank except when $t$ is a multiple of 5. Furthermore, reasoning analogously to the case $s=4$, failure of maximal rank occurs only when $j= \frac{6t}{5}$. In this latter case the kernel and cokernel of $\phi_3$ are both one-dimensional. 

If $t$ is not a multiple of 5, or if $t$ is a multiple of 5 but $j \neq \frac{6t}{5}$, then $\phi_3$ has maximal rank, and so by the argument of Proposition \ref{lem:exchange} (b), $\psi_3$ has maximal rank. So it remains to consider the case that $t$ is a multiple of 5 and $j = \frac{6t}{5}$. We can further assume without loss of generality that $\psi_1$ and $\psi_2$ are injective, since as before surjectivity trivially gives the desired surjectivity for $\psi_3$.

In the commutative diagram (\ref{eq:diagram2}), our assumptions give us short exact sequences in the first two columns. Then from the Snake Lemma we have a long exact sequence
\[
\begin{array}{ll}
0 \rightarrow \ker \phi_1 \rightarrow \ker \phi_2 \rightarrow  \ker \phi_3 \rightarrow [R/(L_0^3,L_1^t,\dots,L_{s}^t)]_{j-1} \stackrel{\psi_3}{\longrightarrow} [R/(L_0^3,L_1^t,\dots,L_{s}^t)]_{j} \\ \\
\hspace{1in} \rightarrow [R/(L_0^3,L_1^t,\dots,L_{s}^t,L)]_{j-1} \rightarrow 0
\end{array}
\]
(where $s=6$).
Since $\psi_2$ is injective, we have that multiplication by $L$ is injective in all degrees up to $j = \frac{6t}{5}$, so the compositions $\phi_1$ and $\phi_2$ are injective for $j = \frac{6t}{5}$. Then $\ker \psi_3 \cong \ker \phi_3$ is one-dimensional, and the cokernel is also one-dimensional in degree $\frac{6t}{5}$ as claimed. We conclude:

\begin{quotation}
{\em For $s=6$, $R/(L_0^3,L_1^t,L_2^t,L_3^t,L_4^t,L_5^t,L_6^t)$ has the WLP if and only if $t$ is not a multiple of 5. If $t$ is a multiple of 5 then there is exactly one degree where $\psi_3$ fails maximal rank, namely $\frac{6t}{5}$, and it is a failure by one of surjectivity.}
\end{quotation}

The proof now goes by induction on $s$. When $s$ is odd, the argument mimics that for  $s=5$, and when $s$ is even it mimics the argument for $s=6$.
\end{proof}



\begin{thebibliography}{BDHHSS}

\bibitem[AH]{AH}
J.~Alexander and A.~Hirschowitz, \emph{Polynomial interpolation in several variables}, 
J.\ Algebraic Geom.\ {\bf 4} (1995), 201--222. 

\bibitem[AA]{AA}  C. Almeida and A. Andrade, {\em Lefschetz property and powers of linear forms in $K[x,y,z]$},  Preprint 2017. 

\bibitem[BMMNZ1]{BMMNZ1} M. Boij, J. Migliore, R.M. Mir\'o-Roig, U. Nagel and F. Zanello, {\em On the shape of a pure O-sequence} Mem. Amer. Math. Soc. 218 (2012), no. 1024, viii+78 pp. 

\bibitem[BMMNZ2]{BMMNZ2} M. Boij, J. Migliore, R.M. Mir\'o-Roig, U. Nagel and F. Zanello, {\em On the Weak Lefschetz Property for artinian Gorenstein algebras of codimension three}, J. Algebra {\bf 403} (2014), 48--68.

\bibitem[BK]{BK} H. Brenner and A. Kaid, {\em A note on the weak Lefschetz property of monomial complete intersections in positive characteristic}, Collect. Math. {\bf 62} (2011),  85--93. 

\bibitem[CoCoA]{cocoa}   CoCoA: a system for doing  Computations in Commutative Algebra, Available at {\tt http://cocoa.dima.unige.it.}

\bibitem[CHMN]{CHMN} D. Cook II., B. Harbourne, J. Migliore and U. Nagel, {\em Line arrangements and configurations of points with an unusual geometric property}, Preprint 2017.

\bibitem[CMNZ]{CMNZ}  D. Cook II, J. Migliore, U. Nagel and F. Zanello, {\em An algebraic approach to finite projective planes}, J. Algebraic Combin. {\bf 43} (2016), 495--519. 

\bibitem[CN1]{cook-nagel 1} D. Cook II and U. Nagel, {\em The weak Lefschetz property, monomial ideals, and lozenges}, Illinois J. Math. {\bf 55} (2011),  377--395 (2012).

\bibitem[CN2]{cook-nagel 2}  D. Cook II and U. Nagel, {\em The weak Lefschetz property for monomial ideals of small type}, J. Algebra {\bf 462} (2016), 285--319. 

\bibitem[DL]{DL}  C.\ De Volder and A.\ Laface, {\em  On linear systems
of $\PP^3$ through multiple points}, J. Algebra {\bf 310} (2007),
207--217.

\bibitem[DI]{DI} R.\ Di Gennaro and G.\ Ilardi, {\it More on ``singular hypersurfaces characterizing the Lefschetz properties''}, Preprint, 2016.

\bibitem[DIV]{DIV} 
R.\ Di Gennaro, G.\ Ilardi and J.\ Vall\`es,  
{\it Singular hypersurfaces characterizing the Lefschetz properties}, 
J.\ London Math.\ Soc. (2) {\bf 89} (2014), 194--212.

\bibitem[D]{Dumnicky} M.\ Dumnicki, {\em An algorithm to bound the regularity
and nonemptiness of linear systems in $\PP^n$},   J.\ Symbolic
Comput.\ {\bf 44}  (2009),  1448--1462.

\bibitem[EI]{EI} 
J.\ Emsalem and A.\ Iarrobino, 
{\it Inverse system of a symbolic power $I$}, 
J.\ Algebra {\bf 174} (1995), 1080--1090.

\bibitem[GS]{GS}
A.V. Geramita and H. Schenck, {\em Fat points, inverse systems, and piecewise polynomial functions}, J. Algebra {\bf 204} (1998), 116--128.

\bibitem[M2]{M2}
D.\ R.\ Grayson and M.\ E.\ Stillman, 
{\it Macaulay2, a software system for research in algebraic geometry},
Available at \url{http://www.math.uiuc.edu/Macaulay2/}. 

\bibitem[HSS]{HSS}  B. Harbourne, H. Schenck and A. Seceleanu, {\em Inverse systems, Gelfand-Tsetlin patterns and the weak Lefschetz property}, J. Lond. Math. Soc. (2) {\bf 84} (2011),  712--730.

\bibitem[HMNW]{HMNW} T.\ Harima J.\ Migliore, U.\ Nagel and J.\ Watanabe:  {\em The Weak and Strong Lefschetz properties for Artinian $K$-Algebras}, J.\ Algebra {\bf 262} (2003), 99-126.

\bibitem[KV]{kustin-vraciu} A. Kustin and A. Vraciu, {\em The weak Lefschetz property for monomial complete intersection in positive characteristic}, Trans. Amer. Math. Soc. {\bf 366} (2014), 4571--4601. 

\bibitem[LU]{LU} A. \ Laface and U.\ Ugaglia, {\em On a class of special
linear systems of $\PP^3$}, Trans. \ Amer. \ Math. \ Soc. {\bf 358}
(2006),
 5485--5500.

\bibitem[LZ]{LZ} J. Li and F. Zanello, {\em Monomial complete intersections, the weak Lefschetz property and plane partitions}, Discrete Math. 310 (2010), 3558--3570. 



\bibitem[MM]{MM} 
J. Migliore and R.M. Mir\'o-Roig, {\em On the strong Lefschetz question  for uniform powers of general linear forms in $k[x,y,z]$}, to appear in Proc. Amer. Math. Soc.

\bibitem[MMN1]{MMN1} J. Migliore, R.M. Mir\'o-Roig and U. Nagel, {\em Monomial ideals, almost complete intersections and the weak Lefschetz property}, Trans. Amer. Math. Soc. {\bf 363} (2011),  229--257. 

\bibitem[MMN2]{MMN2}  J. Migliore, R.M. Mir\'o-Roig and U. Nagel, {\em On the weak Lefschetz property for powers of linear forms}, Algebra Number Theory {\bf 6} (2012),  487--526.

\bibitem[MN]{MN}
J. Migliore  and U. Nagel, {\em The Lefschetz question for ideals generated by powers of arbitrarily many linear forms in $K[x,y,z]$}, Preprint, 2017. 

\bibitem[M]{M} R. Mir\'o-Roig, {\em Harbourne, Schenck and Seceleanu's conjecture}, J. Algebra {\bf 462} (2016), 54--66.

\bibitem[N]{Nagata} M.\ Nagata, {\em On the fourteenth problem of
Hilbert}, In:
 Proc.\ Internat.\ Congress Math.\ 1958,  pp. 459--462, Cambridge
Univ.\ Press, New York,  1960.

\bibitem[SS]{SS} H. Schenck and A. Seceleanu, {\em The weak Lefschetz property and powers of linear forms in $k[x,y,z]$}, Proc. Amer. Math. Soc. {\bf 138} (2010), 2335--2339.

\bibitem[S1]{stanley} R. Stanley, {\em Weyl groups, the hard Lefschetz theorem, and the Sperner property}, SIAM J. Algebraic Discrete Methods {\bf 1} (1980), 168--184. 

\bibitem[S2]{St-faces}
    R.\ Stanley,
    \emph{The number of faces of a simplicial convex polytope},
    Adv.\  Math.\ \textbf{35} (1980), 236--238.


\bibitem[W]{watanabe} J. Watanabe, {\em The Dilworth number of Artinian rings and finite posets with rank function}, Commutative Algebra and Combinatorics, Advanced Studies in Pure Math., Vol. 11, North
Holland, Amsterdam (1987), 303--312.



\end{thebibliography}
\end{document}